\documentclass[11pt]{amsart}

\usepackage[a4paper,margin=1.05in]{geometry}
\usepackage{amsmath,amssymb,amsthm,mathtools}
\usepackage{enumitem}
\usepackage{float}
\usepackage{microtype}
\usepackage{placeins}
\usepackage[pagewise]{lineno}
\usepackage{xcolor}
\usepackage{tikz}
\usetikzlibrary{arrows.meta,calc,positioning}
\usepackage[colorlinks=true,
  linkcolor=blue!55!black,
  citecolor=blue!55!black,
  urlcolor=blue!55!black]{hyperref}
\usepackage[nameinlink,capitalise,noabbrev]{cleveref}

\hypersetup{
  pdftitle={Critical-Exponent Spectra and Rank-Two Inverse Realization on Biregular Trees},
  pdfauthor={Sanghoon Kwon},
  pdfsubject={Entropy spectra, fixed-rank stratification, and inverse realization on biregular trees},
  pdfkeywords={volume entropy, topological entropy, critical exponent,
    biregular tree, non-backtracking dynamics, weighted pressure,
    fixed-rank spectrum, inverse realization}
}

\setlist{itemsep=2pt,topsep=5pt}
\makeatletter
\def\paragraph{\@startsection{paragraph}{4}%
  \z@{.65\linespacing\@plus.45\linespacing}%
  {.25\linespacing}%
  {\normalfont\bfseries}}
\makeatother

\theoremstyle{plain}
\newtheorem{theorem}{Theorem}[section]
\newtheorem{proposition}[theorem]{Proposition}
\newtheorem{lemma}[theorem]{Lemma}
\newtheorem{corollary}[theorem]{Corollary}

\theoremstyle{definition}
\newtheorem{definition}[theorem]{Definition}
\newtheorem{problem}[theorem]{Problem}

\theoremstyle{remark}
\newtheorem{remark}[theorem]{Remark}

\newcommand{\T}{\mathcal T}
\newcommand{\Aut}{\operatorname{Aut}}
\newcommand{\Core}{\operatorname{Core}}
\newcommand{\cA}{\mathcal A}
\newcommand{\cP}{\mathcal P}
\newcommand{\ZZ}{\mathbb Z}
\newcommand{\QQ}{\mathbb Q}
\title[Critical-exponent spectra on biregular trees]
{Critical-Exponent Spectra and Rank-Two Inverse Realization
on Biregular Trees}

\author{Sanghoon Kwon}
\address{Department of Mathematics Education, Catholic Kwandong University,
Gangneung 25601, Republic of Korea}
\email{skwon@cku.ac.kr, shkwon1988@gmail.com}
\date{22 August, 2026}

\subjclass[2020]{Primary 37B40; Secondary 20E08, 37B10, 05C50, 20F69}
\keywords{volume entropy, topological entropy, critical exponent,
biregular tree, non-backtracking dynamics, weighted pressure,
fixed-rank spectrum}
\thanks{This work was supported by the Basic Science Research Program through
the National Research Foundation of Korea (NRF), funded by the Ministry of
Education (No.~RS-2025-25415913).}

\begin{document}
\raggedbottom
%\linenumbers

\begin{abstract}
We study the critical-exponent, or equivalently entropy, spectrum arising
from free type-preserving actions on the biregular tree
\(\mathcal T_{r+1,s+1}\).  For an action with a nonempty finite quotient
core, the critical exponent agrees with both the volume entropy of the
universal cover of the core and the topological entropy of the associated
non-backtracking edge shift.  The unrestricted spectrum is
\([0,\frac12\log(rs)]\), whereas the finitely generated spectrum is a
countable dense subset of this interval obtained by taking logarithms of
Hashimoto spectral radii.  We stratify this finite-state spectrum by the
circuit rank of the quotient core.  At each fixed rank, finitely many typed
kernels parametrize all values.  Moreover, every positive entropy value has
only finitely many kernel--length realizations, up to type-preserving
isomorphism, and every positive accumulation point of a fixed-rank spectrum
belongs to a lower-rank stratum.  At rank two, the corresponding exponential
rates admit a complete inverse classification in terms of three explicit
polynomial families, together with a finite exact membership test for
algebraic-integer inputs.
\end{abstract}

\maketitle
\enlargethispage{4pt}

\tableofcontents

% =============================================================================
\section{Introduction}
% =============================================================================

Entropy spectra record the possible exponential complexities of a dynamical
or geometric class under structural constraints.  In the present setting,
the same exponential rate has three equivalent interpretations: orbit growth
for a group acting on a tree, volume growth in a universal cover, and
topological entropy for a finite non-backtracking edge shift.  The purpose of
this paper is not only to determine the possible rates, but to describe how
their topology changes when finite generation and circuit rank are fixed.

Let \(X\) be a locally finite tree, let \(\Aut(X)\) carry the compact-open
topology, and let \(\Gamma<\Aut(X)\) be discrete.  For the graph metric \(d\)
and a vertex \(o\in X\), the critical exponent is
\begin{equation}\label{eq:critical-exponent-intro}
  \delta_\Gamma
  =\limsup_{R\to\infty}\frac1R
    \log\#\{\gamma\in\Gamma:d(o,\gamma o)\leq R\}.
\end{equation}
It is independent of \(o\).  For an infinite group it is also the exponent
of convergence of the Poincar\'e series
\(\sum_{\gamma\in\Gamma}e^{-t d(o,\gamma o)}\); for a finite group we use
the orbit-growth convention \(\delta_\Gamma=0\).

For a connected locally finite graph \(Y\), its volume entropy is
\begin{equation}\label{eq:volume-entropy-intro}
  h(Y)
  =\limsup_{R\to\infty}\frac1R\log\#B_R(y),
  \qquad B_R(y)=\{v\in V(Y):d(y,v)\leq R\}.
\end{equation}
Changing \(y\) changes radii by only a bounded amount, so \(h(Y)\) is
well-defined.

Fix integers \(r,s\geq2\).  The tree \(\T_{r+1,s+1}\) is bipartite, with
vertices of type zero having degree \(r+1\) and vertices of type one having
degree \(s+1\).  An automorphism is type-preserving if it preserves these
parts.  Direct sphere counting gives
\[
  h(\T_{r+1,s+1})=\frac12\log(rs),
\]
and therefore
\begin{equation}\label{eq:intro-biregular-bound}
  0\leq\delta_\Gamma\leq\frac12\log(rs).
\end{equation}
The regular tree is the specialization
\(\T_{q+1}=\T_{q+1,q+1}\).

A free action, meaning one with trivial vertex stabilizers and no edge
inversions, is an ordinary graph covering.  Its quotient is bipartite when
the action is type-preserving.  Removing hanging trees from the quotient
leaves its cyclic core.  For a finite core \(C\), let \(\vec E(C)\) be its
set of oriented edges and let \(B_C\) be the Hashimoto matrix on those
edges: an entry is one precisely when two
oriented edges concatenate without immediate reversal.  We write
\(\rho(M)\) for the spectral radius of a square matrix.  If the quotient has
a nonempty finite core \(C\) and \(\widetilde C\) denotes its universal cover,
the covering dictionary gives
\begin{equation}\label{eq:intro-growth-dictionary}
  e^{\delta_\Gamma}=\rho(B_C),
  \qquad
  \delta_\Gamma=\log\rho(B_C)
  =h(\widetilde C)=h_{\mathrm{top}}(\Sigma_C,\sigma_C),
\end{equation}
where \((\Sigma_C,\sigma_C)\) is the two-sided subshift of finite type whose
states are the oriented edges of \(C\) and whose allowed transitions are the
non-backtracking turns.  More generally, extend a positive integral
edge-length function to oriented edges by \(\ell(\bar f)=\ell(f)\), and let
\(P\) denote topological pressure on \((\Sigma_C,\sigma_C)\).  The entropy
\(h\) of the corresponding suspension flow is selected by the pressure
equation
\begin{equation}\label{eq:intro-pressure-dictionary}
  P(-h\ell)=0
  \quad\Longleftrightarrow\quad
  \rho\!\left(B_C\operatorname{diag}
    (e^{-h\ell(f)}:f\in\vec E(C))\right)=1.
\end{equation}
This weighted finite-state viewpoint is the mechanism behind the fixed-rank
finiteness and rank-drop arguments and the inverse formulas below.

\paragraph{Notation and spectral strata}
Throughout, \(\mathbb N=\{1,2,3,\ldots\}\).  We keep logarithmic and
multiplicative rates distinct:
\[
  \lambda_\Gamma=e^{\delta_\Gamma},
  \qquad
  \eta(C)=\log\rho(B_C).
\]
Define the unrestricted multiplicative spectrum
\[
  \mathcal S_{r,s}^{\mathrm{free}}
  =\{\lambda_\Gamma:\Gamma<\Aut(\T_{r+1,s+1})
       \text{ acts freely, discretely, and type-preservingly}\},
\]
and let \(\cA_{r,s}\) be the subset obtained by requiring \(\Gamma\) to be
finitely generated.  We call \(\cA_{r,s}\) the \emph{finite-state
spectrum}: each of its values is encoded by the finite Hashimoto matrix of a
finite quotient core, as proved in \cref{thm:intro-stratification}.  For
\(g\geq2\), let \(\mathcal R_{g;r,s}\) consist of
the elements \(\rho(B_C)>1\) of \(\cA_{r,s}\) realized by a core with
\[
  b_1(C)=|E(C)|-|V(C)|+1=g,
\]
where \(E(C)\) denotes the unoriented edge set.  Thus
\[
  \mathcal S_{r,s}^{\mathrm{free}}
  \supseteq\cA_{r,s}
  \supseteq\mathcal R_{g;r,s},
\]
and the paper determines how the spectrum changes along these successive
complexity restrictions.  All derived-set and fixed-rank statements are made
in the multiplicative variable \(\lambda=e^\delta\); the corresponding
additive entropy statements follow by applying \(\log\).

\paragraph{Main results}
The main structural result is the fixed-rank stratification and its rank-drop
boundary, stated in \cref{thm:intro-fixed-rank}.  We first identify the
unrestricted and finite-state levels of \cref{fig:global-roadmap} in order to
fix the ambient spectrum.

\begin{theorem}[Global and finite-state spectra]
\label{thm:intro-stratification}
Let \(r,s\geq2\).
\begin{enumerate}[label=\textup{(\roman*)}]
  \item One has
  \[
    \mathcal S_{r,s}^{\mathrm{free}}=[1,\sqrt{rs}].
  \]
  More precisely, for every
  \(\delta\in(0,\frac12\log(rs)]\), there is a connected bipartite
  \((r+1,s+1)\)-biregular quotient graph whose universal-cover deck group
  acts freely and type-preservingly with critical exponent \(\delta\).
  Its attached finite blocks are connected non-cyclic bipartite cores whose
  logarithmic rates increase strictly to \(\delta\) along both directions of
  a bi-infinite spine.  The trivial group supplies \(\delta=0\).

  \item The finitely generated stratum is
  \begin{equation}\label{eq:Ars-intro}
    \cA_{r,s}
    =\left\{\rho(B_C):
      \begin{array}{l}
        V(C)=C_0\sqcup C_1\text{ is a finite connected bipartite core},\\
        2\leq\deg(v)\leq r+1\ (v\in C_0),\\
        2\leq\deg(w)\leq s+1\ (w\in C_1)
      \end{array}\right\}.
  \end{equation}

  \item The set \(\cA_{r,s}\) is countable and dense in
  \([1,\sqrt{rs}]\).  A cycle contributes the value \(1\).
\end{enumerate}
\end{theorem}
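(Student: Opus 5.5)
I would prove the three parts in the order (ii), (iii), (i). Parts (ii) and (iii) follow almost directly from the covering dictionary \eqref{eq:intro-growth-dictionary}. Part (i) needs an explicit infinite construction.

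\emph{Part (ii).} Let \(\Gamma\) be finitely generated, free and type-preserving. Then \(\Gamma\) is a free group of finite rank, and \(\Gamma\backslash\T_{r+1,s+1}\) is a bipartite \((r+1,s+1)\)-biregular graph with finite core \(C\). If \(\Gamma\neq1\) the core is nonempty. By \eqref{eq:intro-growth-dictionary}, \(\lambda_\Gamma=\rho(B_C)\). As a subgraph, \(C\) inherits the bipartition, satisfies the degree bounds \(\deg\le r+1\) on \(C_0\) and \(\deg\le s+1\) on \(C_1\), and has minimum degree at least \(2\) because it is a core.

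For the converse, take any such finite core \(C\). Attach hanging \((r+1,s+1)\)-biregular trees at every vertex with degree deficit, respecting types. The resulting graph \(Y\) is biregular and bipartite, and its universal cover is \(\T_{r+1,s+1}\). The deck group is free of rank \(b_1(C)\), acts freely and type-preservingly, and has quotient core \(C\).

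The trivial group gives \(\lambda=1\), and so does a cycle core, since then \(B_C\) is a permutation matrix. So \(1\in\cA_{r,s}\).

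\emph{Part (iii).} Countability holds because there are only countably many finite graphs. By \eqref{eq:intro-biregular-bound}, \(\cA_{r,s}\subseteq[1,\sqrt{rs}]\). For density I would use the pressure equation \eqref{eq:intro-pressure-dictionary} together with subdivision.

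\begin{itemize}
\item Take a fixed core \(K\) that is \((r+1,s+1)\)-full, for instance a finite biregular bipartite graph such as a complete bipartite graph with suitable multiplicities. It has \(\rho(B_K)=\sqrt{rs}\).
\item Replace each edge of \(K\) by a path of odd length \(\ell(f)\), so that types are preserved. The resulting core has rate \(e^{h}\), where \(h\) solves \(\rho(B_K\,\mathrm{diag}(e^{-h\ell(f)}))=1\).
\item Varying the lengths edge by edge changes \(h\) by small amounts. Concretely, use long uniform lengths \(L\) on most edges and tune a few edges, or interpolate between lengths \(L\) and \(L+2\) on a growing fraction of edges.
\item Monotonicity and continuity of \(h\) in the weights then give values filling \((0,\tfrac12\log rs]\) densely.
\end{itemize}

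The bookkeeping here is in checking that the increments tend to zero. Since \(h\approx \log\sqrt{rs}/L\)-scale quantities vary with step of order \(1/L^2\), density near \(0\) comes from large \(L\). Near the top I would instead take larger and larger biregular graphs with a single edge subdivided once. Their rates tend to \(\sqrt{rs}\), and a perturbation argument on the Perron eigenvector shows the gaps shrink. Combining the two regimes, every subinterval of \([1,\sqrt{rs}]\) is hit.

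\emph{Part (i).} The inclusion \(\subseteq\) is \eqref{eq:intro-biregular-bound}. For \(\supseteq\), fix \(\delta\in(0,\tfrac12\log rs]\).

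\begin{itemize}
\item By (iii), choose finite connected non-cyclic bipartite cores \(C_n\), \(n\in\ZZ\), with \(\eta(C_n)\) strictly increasing to \(\delta\) as \(|n|\to\infty\). For \(\delta=\tfrac12\log rs\) these approach from below; the full tree itself already realizes this value via the trivial-core-free limit.
\item Glue the \(C_n\) along a bi-infinite spine, a line with one attaching edge from each spine vertex to a degree-deficient vertex of \(C_n\), with the spine lengths chosen to respect parity.
\item Complete the result to a biregular graph \(Y\) by hanging trees, and let \(\Gamma\) be its deck group.
\end{itemize}

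The main obstacle is the equality \(\delta_\Gamma=\delta\). The lower bound is easy: \(\delta_\Gamma\ge\sup_n\eta(C_n)=\delta\), since each \(C_n\) gives a subgroup. For the upper bound I would count closed non-backtracking walks based at a spine vertex. Such a walk decomposes into spine excursions and visits to blocks, which I would bound by a Poincaré-series estimate
\[
\sum_{\gamma}e^{-t d(o,\gamma o)}\le\sum_{\text{walks}}\prod(\text{block series at }t)\cdot e^{-t\cdot\text{spine length}}.
\]
For \(t>\delta\), each block's Green function at \(t\) is finite but blows up as \(\eta(C_n)\uparrow t\). To control this I would make the attaching spine segments between consecutive blocks grow fast, say lengths \(L_n\to\infty\) chosen after the \(C_n\). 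The factors \(e^{-tL_n}\) then dominate the uniformly bounded block contributions for fixed \(t>\delta\), which forces convergence and gives \(\delta_\Gamma\le\delta\).
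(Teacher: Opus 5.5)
\textbf{Gap in the density part of (iii).} Your part (ii) and the lower bound $\delta_\Gamma\ge\sup_n\eta(C_n)$ in (i) agree with the paper. The density claim in (iii), which you also use to supply the blocks for (i), has a genuine gap: interpolating integer lengths on a fixed core does not make the increments small.

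\begin{itemize}
\item Suppose all lengths lie in $\{L,L+2\}$. The only general bound you have comes from the fact that lengthening one edge from $L$ to $L+2$ keeps the length vector below $\frac{L+2}{L}$ times the old one. This gives a one-step drop of at most $2h/(L+2)\le 2h_0/(L(L+2))$, where $h_0=\frac12\log(rs)$.
\item That bound is exactly the width of the interval $[h_0/(L+2),h_0/L]$ you are trying to fill. For a fixed target $t\approx h_0/L$ it is of order $t^2/h_0$ and does not tend to zero. So taking $L$ large gives density at no fixed $t>0$.
\item Smaller steps require every subdivided edge to carry small equilibrium mass at \emph{every} intermediate stage. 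A perturbation of the Perron vector of the unsubdivided biregular graph does not control the later stages. Once part of the graph has been lengthened, the Perron vector can localize on the short part. If that part contains a small dense subgraph with short cycles, lengthening one of its edges moves the rate by a macroscopic amount.
\item Your ``near the top'' regime only yields a sequence tending to $\sqrt{rs}$. Also, a single subdivision is incompatible with the original types; you need length three. So the middle of the interval is not covered, and ``combining the two regimes'' does not follow.
\end{itemize}

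\textbf{The missing idea.} The paper uses a sparse, Tim\'ar-type subdivision:
\begin{itemize}
\item Replace every edge of $K_{s+1,r+1}$ by a path of odd length $M$.
\item Pass to a finite cover of large girth, using residual finiteness.
\item Color its edges so that each color class lifts to an $(L+2)$-separated set in the universal cover.
\item Lengthen one class at a time to length three, in two single subdivisions.
\end{itemize}
The tree comparison $\lambda(T)^{(L+1)/(L+2)}\le\lambda(T_{\mathcal E})\le\lambda(T)$ bounds every step by $h_0/(L+2)$, independently of any Perron data. Weighted pressure comparison gives strict decrease. The intervals $[h_0/(3M),h_0/M]$, $M$ odd, cover $(0,h_0]$. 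The construction also leaves degree-two ports of both types for the attachments in (i).

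\textbf{A secondary repair in (i).} The block series $\Phi_n(e^{-t})$ are not uniformly bounded in $n$, and the corridor lengths cannot depend on $t$. Choose them once at $z_0=e^{-\delta}$, where each $\Phi_n(z_0)<\infty$ because $\eta(C_n)<\delta$. Choose them so that the transfer kernel $z_0^{d_{ij}}\Phi_j(z_0)$, over ordered pairs of distinct blocks, has $\ell^\infty$-norm below one, and the initial weights are summable. Monotonicity in $z$ then handles every $t\ge\delta$.
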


\begin{figure}[H]
\centering
\begin{tikzpicture}[
  y=.9cm,
  box/.style={draw=blue!65!black,fill=blue!5,rounded corners=3pt,
    minimum height=.82cm,text width=5.2cm,align=center,font=\small,
    line width=.8pt},
  infinite/.style={box,draw=green!48!black,fill=green!5},
  finite/.style={box,draw=orange!72!black,fill=orange!7},
  arr/.style={-{Stealth[length=5pt]},line width=.9pt,blue!62!black},
  greenarr/.style={arr,green!48!black},
  orangearr/.style={arr,orange!72!black},
  tag/.style={font=\scriptsize,text=gray!72!black,fill=white,inner sep=1.2pt}
]
  \node[box] (free) at (0,0)
    {free type-preserving quotients of \(\T_{r+1,s+1}\)};
  \node[infinite] (all) at (-3.5,-1.65)
    {unrestricted quotient complexity\\full continuum};
  \node[finite] (fg) at (3.5,-1.65)
    {finite generation\\finite cores and Hashimoto radii};
  \node[finite] (rank) at (3.5,-3.25)
    {fixed rank \(b_1\!=\!g\)\\finitely many typed kernels};
  \node[finite] (ranktwo) at (3.5,-5.25)
    {rank two\\three kernels and inverse\\polynomial families};
  \node[infinite] (boundary) at (-3.5,-3.25)
    {degeneration boundary\\lower-rank spectral strata};

  \draw[greenarr] (free.south) -- ++(0,-.35) -| (all.north);
  \draw[orangearr] (free.south) -- ++(0,-.35) -| (fg.north);
  \draw[orangearr] (fg)--node[tag,right]{\hspace{0.5em} fix \(b_1=g\)} (rank);
  \draw[orangearr] (rank)--node[tag,right]{\hspace{0.5em} set \(g=2\)} (ranktwo);
  \draw[greenarr] (rank.west)--node[tag,above=1mm]{rank drop} (boundary.east);
\end{tikzpicture}
\caption{The spectral hierarchy.  Infinite quotient complexity yields the
full continuum; finite generation produces finite-state Hashimoto data;
fixed rank leaves finitely many typed kernels, whose degenerations land in
lower-rank strata; rank two has an explicit inverse classification.}
\label{fig:global-roadmap}
\end{figure}

Finite generation therefore collapses the continuum to a countable dense
finite-state spectrum.  Suppressing maximal paths whose internal vertices
have degree two produces a typed kernel \((K,\tau)\), where
\(\tau:V(K)\to\mathbb Z/2\mathbb Z\) records the inherited bipartite vertex
type.  If a kernel edge \(e\) has endpoints \(v,w\), its suppressed path
length is admissible precisely when
\(\ell(e)\equiv\tau(v)+\tau(w)\pmod2\).  Thus the length data form an
admissible vector \(\ell\in\mathbb N^{E(K)}\).  Denote the Hashimoto radius of the
reconstructed subdivision \(K(\ell)\) by \(\alpha_{K,\ell}\).
For a subset \(A\subseteq\mathbb R\), we write \(A'\) for its derived
set, that is, the set of accumulation points of \(A\).

\begin{theorem}[Fixed-rank entropy stratification and rank drop]
\label{thm:intro-fixed-rank}
Fix \(g\geq2\) and \(r,s\geq2\).
\begin{enumerate}[label=\textup{(\roman*)}]
  \item There is a finite set \(\mathcal K_{g;r,s}\) of typed kernels such
  that
  \[
    \mathcal R_{g;r,s}
    =\bigcup_{(K,\tau)\in\mathcal K_{g;r,s}}
      \{\alpha_{K,\ell}:\ell\text{ is admissible for }\tau\}.
  \]
  Every such kernel satisfies
  \(|V(K)|\leq2g-2\) and \(|E(K)|\leq3g-3\).

  \item For every prescribed \(\alpha>1\), only finitely many typed
  kernel--length data of rank \(g\), up to type-preserving isomorphism,
  realize \(\alpha\).

  \item If pairwise distinct \(\alpha_n\in\mathcal R_{g;r,s}\) converge
  to \(\alpha>1\), then
  \(\alpha\in\mathcal R_{g_0;r,s}\) for some \(2\leq g_0<g\).  Equivalently,
  \begin{equation}\label{eq:intro-derived-set}
    \bigl(\mathcal R_{g;r,s}\bigr)'\cap(1,\infty)
    \subseteq\bigcup_{2\leq h<g}\mathcal R_{h;r,s}.
  \end{equation}
  In particular, the only accumulation point of
  \(\mathcal R_{2;r,s}\) in \([1,\infty)\) is \(1\).
\end{enumerate}
\end{theorem}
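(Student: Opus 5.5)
Part (i) is combinatorial and I would prove it first. By \cref{thm:intro-stratification}(ii), every element of \(\mathcal R_{g;r,s}\) is \(\rho(B_C)\) for a finite connected bipartite core \(C\) with \(b_1(C)=g\) and the stated degree bounds. Since \(g\geq2\), the core is not a cycle, so the vertices of degree at least three form a nonempty set. Suppressing the degree-two vertices gives a kernel \(K\) of minimum degree three, possibly with loops and multi-edges, and still with \(b_1(K)=g\). Write \(n=|V(K)|\) and \(m=|E(K)|\). The handshake inequality \(2m\geq3n\), combined with \(m-n+1=g\), gives \(n\leq2g-2\) and \(m\leq3g-3\). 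Kernel degrees are inherited from \(C\), so they are at most \(r+1\) on type zero and at most \(s+1\) on type one. Only finitely many multigraphs with at most \(3g-3\) edges exist, and each has finitely many typings \(\tau\); this gives the finite set \(\mathcal K_{g;r,s}\). Conversely, for any admissible \(\ell\) the subdivision \(K(\ell)\) is a bipartite core with the same degrees and \(b_1=g\). Its Hashimoto radius exceeds \(1\) because the graph is not a cycle. Admissibility is exactly the condition that the subdivided paths alternate types correctly, so every such \(K(\ell)\) lies in \(\mathcal R_{g;r,s}\).

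For (ii) and (iii) I would use the weighted dictionary \eqref{eq:intro-pressure-dictionary}, applied on the kernel. A non-backtracking path in \(K(\ell)\) is the same as a non-backtracking path in \(K\), with edge \(e\) counted with length \(\ell(e)\). Hence \(\alpha=\alpha_{K,\ell}\) is the unique \(\alpha>1\) with \(\rho(B_K D_\ell(\alpha))=1\), where \(D_\ell(\alpha)=\operatorname{diag}(\alpha^{-\ell(e)})\). The function \(F(x)=\rho(B_K\operatorname{diag}(x_e))\) is continuous and monotone on \([0,\infty)^{\vec E(K)}\), and strictly increasing in each coordinate on the irreducible part.

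For (ii), fix \(\alpha>1\) and a kernel \(K\); there are only finitely many kernels by (i). Suppose infinitely many \(\ell\) realize \(\alpha\). Pass to a subsequence on which each \(\ell(e)\) is either constant or tends to infinity, with at least one coordinate diverging, since only finitely many vectors stay bounded. In the limit the diverging weights go to \(0\), so \(F\) converges to \(F\) evaluated with those coordinates deleted, which equals \(1\). Deleting edges, however, strictly decreases the spectral radius compared with any finite-\(\ell\) member. In fact monotonicity gives something sharper: \(\ell\mapsto F(\alpha^{-\ell})\) is strictly decreasing coordinatewise, because the Hashimoto matrix of a kernel with minimum degree at least three is irreducible. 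A strictly monotone family equal to \(1\) along an infinite sequence of distinct vectors is impossible once the vectors are comparable. To turn this into a proof I would use Dickson's lemma: any infinite set of vectors in \(\mathbb N^{E}\) contains two comparable distinct vectors \(\ell\leq\ell'\), and strict monotonicity then forces \(F(\alpha^{-\ell'})<F(\alpha^{-\ell})=1\). This settles (ii).

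For (iii), take distinct \(\alpha_n\to\alpha>1\). Pass to a single kernel, and by (ii) to a subsequence of distinct length vectors whose coordinates are eventually constant or tend to infinity, with the divergent set \(J\) nonempty. By continuity, \(\rho(B_{K\setminus J}D_{\ell|}(\alpha))=1\), where \(K\setminus J\) is the kernel with the edges of \(J\) removed. Let \(K'\) be a component of the non-backtracking core of \(K\setminus J\) achieving this spectral radius, that is, repeatedly strip degree-one vertices. Because \(\alpha>1\), \(K'\) is not a cycle, which would give radius \(1\), and not empty, so \(b_1(K')\geq2\). Removing edges drops the rank, so \(b_1(K')<g\). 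The subdivision \(K'(\ell)\) is then a typed bipartite core with the same degree bounds, hence \(\alpha\in\mathcal R_{b_1(K');r,s}\). The statement about \(\mathcal R_{2;r,s}\) follows because there is no stratum with \(2\leq h<2\).

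The main obstacle is the strict-monotonicity and irreducibility claim behind the Dickson argument. I would argue that the Hashimoto matrix of a connected graph with minimum degree at least three is irreducible. The continuity step at weight \(0\), where a degenerating edge disconnects the remainder or creates hanging trees, also needs care, since the radius must equal that of a genuine core component.
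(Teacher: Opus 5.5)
Your overall route is the paper's. For (i) you use kernel suppression with the excess-degree count. For (ii) you use strict Perron--Frobenius monotonicity of $\ell\mapsto\rho(M_{K,\ell}(\alpha))$ together with Dickson's lemma; your first limiting argument, which compares the limit matrix entrywise with an irreducible finite-$\ell$ member, would also suffice on its own. For (iii) you use the constant-or-divergent subsequence, continuity of the spectral radius, the recurrent-block decomposition, and the choice of a non-cyclic component carrying pressure one. Two steps, however, are not correctly justified as written.

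\textbf{Rank drop.} The claim ``removing edges drops the rank'' is false in general, because deleting a bridge does not change $b_1$. If the divergent edge is the corridor of a dumbbell, or a bridge joining two rank-two lobes, then $K\setminus J$ still has total rank $g$. What you need is that a \emph{connected} core $K'$ contained in a proper edge-subgraph of $K$ has $b_1(K')<g$, and this uses the minimum-degree hypothesis on $K$. Suppose $b_1(K')=g$. Then no edge outside $K'$ lies on a cycle of $K$, so all such edges are bridges. Contracting $K'$ leaves a tree with at least one edge, and a leaf of it other than the contracted vertex is a degree-one vertex of $K$, which is a contradiction. This is exactly \cref{lem:proper-core-betti-drop}, and it is the step your one-line justification skips.

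\textbf{The point $1$.} The last assertion of (iii) says that $1$ \emph{is} an accumulation point of $\mathcal R_{2;r,s}$, but you only showed that there is none in $(1,\infty)$. Add an explicit family. For example, take the bipartite theta graphs with three paths of common length $L$; their pressure equation $3/(\alpha^L+1)=1$ gives $\alpha=2^{1/L}\to1$.

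A minor point: a cycle component is excluded more cleanly by noting that its weighted block at $x=\alpha>1$ has spectral radius $\alpha^{-L/k}<1$. Here $L$ is its total length and $k$ its number of kernel edges, so such a block cannot carry pressure one.
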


At rank two the excess-degree identity leaves only the figure-eight, theta,
and dumbbell kernels.

\begin{theorem}[Biregular rank-two inverse realization]
\label{thm:intro-rank-two}
Fix \(r,s\geq2\), and let \(\alpha>1\).  There is a finite connected bipartite core \(C\) with
\(b_1(C)=2\), with the degree bounds in \eqref{eq:Ars-intro}, and with
\(\rho(B_C)=\alpha\) if and only if at least one of the following holds:
\begin{enumerate}[label=\textup{(\roman*)}]
  \item \(\max\{r,s\}\geq3\), and for some even \(a,b\geq2\), the number
  \(\alpha\) is the unique root \(x>1\) of
  \[
    x^{a+b}-x^a-x^b-3=0;
  \]
  \item for some positive integers \(a,b,c\) of the same parity,
  \(\alpha\) is the unique root \(x>1\) of
  \[
    x^{a+b+c}-x^a-x^b-x^c-2=0;
  \]
  \item for some even \(a,b\geq2\) and some \(c\geq1\), \(\alpha\) is the
  unique root \(x>1\) of
  \[
    x^{2c}(x^a-1)(x^b-1)-4=0.
  \]
\end{enumerate}
The three cases are the typed figure-eight, theta, and dumbbell kernels.
Moreover, if \(\alpha\) is a real algebraic integer specified by its
minimal polynomial and a rational isolating interval, membership is decidable
by a finite exact search.
\end{theorem}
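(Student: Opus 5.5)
The plan is to reduce everything to the three rank-two kernels given by \cref{thm:intro-fixed-rank}(i), and to compute the Hashimoto radius of each subdivision through the pressure equation \eqref{eq:intro-pressure-dictionary}.

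\textbf{Step 1: the three kernels.} A rank-two core with all degrees at least two suppresses to a kernel $K$ with minimum degree at least three. The excess-degree identity $\sum_v(\deg v-2)=2(g-1)=2$ then forces one of two shapes: a single vertex of degree four with two loops (the figure-eight), or two vertices of degree three. In the second case the two vertices are joined either by three parallel edges (the theta) or by one edge carrying a loop at each end (the dumbbell).

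\textbf{Step 2: admissibility and degree bounds.} Here I apply the rule $\ell(e)\equiv\tau(v)+\tau(w)\pmod 2$ to each kernel.
\begin{enumerate}
\item A loop must have even length, and length at least $2$ since the core is bipartite.
\item The degree-four vertex of the figure-eight needs $r+1\ge4$ or $s+1\ge4$, according to its type. Since the type can be chosen, this is exactly the condition $\max\{r,s\}\ge3$.
\item Degree three is always allowed because $r,s\ge2$.
\item In the theta, the three lengths share the parity of $\tau(v)+\tau(w)$, and either parity is realizable.
\item In the dumbbell, the loops have even lengths $a,b\ge2$ and the bridge has any length $c\ge1$, with the types of its endpoints adjusted to match.
\end{enumerate}
Interior subdivision vertices have degree two, which lies within both bounds. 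Conversely, every admissible datum gives a legitimate bipartite core. Combined with \cref{thm:intro-stratification}(ii), this shows that the $\alpha$ in question are exactly the Hashimoto radii of these subdivisions.

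\textbf{Step 3: the characteristic equations.} For a subdivision $K(\ell)$, the radius $\rho(B_{K(\ell)})$ equals the unique $x>1$ at which the reduced kernel matrix $M(x)$ has spectral radius $1$. Here $M(x)$ is indexed by oriented kernel edges, with entries $x^{-\ell(f)}$ on allowed non-backtracking turns. Equivalently, $x$ is the largest root of $\det(I-M(x))=0$; I take this Ihara–Bass style reduction as the content of \eqref{eq:intro-pressure-dictionary} with $h=\log x$. Uniqueness of the root on $(1,\infty)$ follows because the spectral radius of the nonnegative irreducible matrix $M(x)$ decreases strictly from values above $1$ to $0$. Clearing denominators in $\det(I-M(x))$ should then give the three stated polynomials.
\begin{enumerate}
\item For the figure-eight, with loop lengths $a,b$, I expect $x^{a+b}-x^a-x^b-3$.
\item For the theta, with lengths $a,b,c$, I expect $x^{a+b+c}-x^a-x^b-x^c-2$.
\item For the dumbbell, with loop lengths $a,b$ and bridge length $c$, I expect $x^{2c}(x^a-1)(x^b-1)-4$.
\end{enumerate}
To keep this manageable I exploit the symmetry under the reversal $f\mapsto\bar f$. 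For the dumbbell I would first solve on each loop-plus-stem separately and then combine the pieces across the bridge. As a sanity check, the theta with $a=b=c=1$ gives $x^3-3x-2=(x-2)(x+1)^2$, with root $2$, which is the Hashimoto radius of the triple edge. Also, $(x^a-1)(x^b-1)$ is exactly the return factor of the two loops.

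\textbf{Step 4: decidability.} Given the minimal polynomial $m$ of $\alpha$ and a rational isolating interval, I would first bound the lengths. Each family forces the total length to be at most a bound in terms of $\alpha$: for the theta, $x^{a+b+c}\le 3x^{\max\{a,b,c\}}+2$ bounds the two smaller lengths, and the largest length is bounded because $\alpha>1$ is fixed. One then checks the finitely many candidate polynomials $p$ for divisibility $m\mid p$ over $\QQ$, together with the condition that $\alpha$ is the root of $p$ exceeding $1$; the latter is automatic once $m\mid p$, since $p$ has a unique root there. The subtle point is bounding the largest length. Fixing the two smaller lengths and letting the largest tend to infinity, the root tends to the radius of a lower-rank limit, which is $1$, so only finitely many lengths can give roots at least $\alpha$. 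This is essentially \cref{thm:intro-fixed-rank}(ii), made effective using exact rational comparisons inside the isolating interval.

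\textbf{Main obstacle.} I expect the main obstacle to be the determinant computation in Step 3 for the dumbbell, where loops, stems and the bridge interact. I would handle it using the symmetric-versus-antisymmetric block decomposition under edge reversal.
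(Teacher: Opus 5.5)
Your outline follows the paper: the excess-degree count gives the three kernels, the parity rule plus the degree-four vertex gives the conditions of \cref{cor:rank-two-tree-actions}, the pressure equation selects the radius, and length bounds make the search finite. The gap is Step~3. It carries the real content of the theorem, you leave it at ``I expect'', and two of the ingredients you propose for it are wrong as stated.

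First, clearing denominators in \(\det(I-M(x))\) does not give \(R_{a,b}\), \(\Theta_{a,b,c}\), \(D_{a,b,c}\). The reversal-antisymmetric block contributes an extra factor:
\begin{itemize}
  \item \((x^a-1)(x^b-1)\) for the figure-eight and for the dumbbell;
  \item for the theta, \(\det(I+N(x))\), where \(N\) is the symmetric \(3\times3\) block; after clearing this is \(\prod_j(x^{\ell_j}-1)\bigl(1+\sum_j(x^{\ell_j}-1)^{-1}\bigr)\).
\end{itemize}
Since you only know that \(\alpha\) is a zero of the whole determinant, you must show these factors do not vanish at \(\alpha\). The paper avoids this step. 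It uses that the Perron vector of \(M(\alpha)\) is unique, hence invariant under the involutions commuting with \(M\), and writes the eigenvalue-one equations only on symmetric coordinates:
\begin{itemize}
  \item figure-eight: \((1-\alpha^{-a})A=2\alpha^{-b}B\) and \((1-\alpha^{-b})B=2\alpha^{-a}A\), so \((\alpha^a-1)(\alpha^b-1)=4\);
  \item theta: the matrix determinant lemma gives \(\sum_j(\alpha^{\ell_j}+1)^{-1}=1\);
  \item dumbbell: with \(p=\alpha^{-a}\), \(s_0=\alpha^{-b}\), \(w=\alpha^{-c}\), the cyclic system \((1-p)A=wU\), \(V=2pA\), \((1-s_0)B=wV\), \(U=2s_0B\) eliminates to \((1-p)(1-s_0)=4w^2ps_0\).
\end{itemize}

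Second, your uniqueness argument does not prove what the theorem needs. Strict decrease of \(x\mapsto\rho(M(x))\) shows only that \(\alpha\) is the unique value-one point, equivalently the largest real zero of \(\det(I-M(x))\). It excludes nothing at \(x<\alpha\), where \(1\) may be a non-Perron eigenvalue of \(M(x)\); the paper deliberately makes no uniqueness claim for zeros of \(F_{K,\ell}\). Yet the statement asserts that \(\alpha\) is the unique root \(x>1\) of each polynomial, and both directions of the equivalence use this to identify a given root with a Hashimoto radius. Your Step~4 claim that \(\alpha\) is ``automatically'' the distinguished root once \(m\mid p\) also rests on it.

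The repair is to use the scalar forms:
\begin{itemize}
  \item \(R_{a,b}=(x^a-1)(x^b-1)-4\) is strictly increasing on \((1,\infty)\) from \(-4\);
  \item \(D_{a,b,c}=x^{2c}(x^a-1)(x^b-1)-4\) is strictly increasing on \((1,\infty)\) from \(-4\);
  \item \(\Theta_{a,b,c}=\prod_j(x^{\ell_j}+1)\bigl(1-\sum_j(x^{\ell_j}+1)^{-1}\bigr)\), whose second factor increases strictly from \(-\tfrac12\) to \(1\).
\end{itemize}
With this in place Step~4 goes through. For the theta you can also drop the limiting argument: once \(a\le b\) are bounded, \(c\) is forced by \(\alpha^c(\alpha^{a+b}-1)=\alpha^a+\alpha^b+2\).
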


A real algebraic integer \(\lambda>1\) is a weak Perron number if every
Galois conjugate \(\lambda'\) satisfies \(|\lambda'|\leq\lambda\).  Every
element of \(\cA_{r,s}\setminus\{1\}\) is weak Perron, but this condition
does not record the bipartition, the reversal involution, or the two local
degree bounds.  The typed-kernel parametrization and the rank-two theorem
give an exact partial converse.

\begin{corollary}[Regular-tree specialization]\label{cor:intro-regular}
Set \(r=s=q\).  Then
\[
  \{\delta_\Gamma:\Gamma<\Aut(\T_{q+1})\text{ is discrete}\}
  =[0,\log q],
  \qquad
  \cA_{q,q}=\cA_q,
\]
where \(\cA_q\) is the finitely generated free-action spectrum on
\(\T_{q+1}\), without a type-preserving hypothesis.  Hence the regular
full-interval and unstratified finite-core statements follow from the
biregular results.
\end{corollary}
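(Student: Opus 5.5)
The corollary has two parts. The first is the regular full interval. The second is the identification of $\cA_{q,q}$ with the unrestricted regular finite-core spectrum. I would deduce both from \cref{thm:intro-stratification} with $r=s=q$, and add a separate reduction from arbitrary discrete groups to the free case.

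For the interval statement, the upper bound $\delta_\Gamma\le\log q$ is \eqref{eq:intro-biregular-bound}, since $\tfrac12\log(q\cdot q)=\log q$. Orbit counting is dominated by ball growth in $\T_{q+1}$ times the (finite) order of a vertex stabilizer; this finite factor is where discreteness enters. For the lower inclusion, $\Aut(\T_{q+1,q+1})$ with type preservation forgotten is exactly $\Aut(\T_{q+1})$. So \cref{thm:intro-stratification}(i) already produces, for each $\delta\in(0,\log q]$, a discrete (indeed free) subgroup with critical exponent $\delta$. The trivial group gives $\delta=0$. Hence the set of critical exponents of discrete subgroups is exactly $[0,\log q]$, with no appeal needed to nonfree actions beyond the upper bound.

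For $\cA_{q,q}=\cA_q$, I would describe $\cA_q$ through the covering dictionary \eqref{eq:intro-growth-dictionary}. A finitely generated free group acting on $\T_{q+1}$, possibly without preserving types, has quotient with a finite core $C$ and $\lambda_\Gamma=\rho(B_C)$. Here $C$ is a finite connected core with degrees between $2$ and $q+1$, but it need not be bipartite. The inclusion $\cA_{q,q}\subseteq\cA_q$ is immediate. For the converse, given such a $C$, I would pass to its bipartite double cover $\hat C=C\times K_2$. This is again a finite connected core if $C$ is nonbipartite; if $C$ is already bipartite, take $\hat C=C$. Its vertex degrees are unchanged, so it satisfies the bounds in \eqref{eq:Ars-intro} with $r=s=q$.

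The key step is to check that $\rho(B_{\hat C})=\rho(B_C)$. The covering $\hat C\to C$ induces a covering of non-backtracking edge shifts, so $h_{\mathrm{top}}(\Sigma_{\hat C})=h_{\mathrm{top}}(\Sigma_C)$. Alternatively, $\widetilde{\hat C}=\widetilde C$, and the volume entropies of the universal covers agree by \eqref{eq:intro-growth-dictionary}. Then \eqref{eq:Ars-intro} places $\rho(B_C)$ in $\cA_{q,q}$. The case where $C$ is a cycle gives $1$ on both sides. The only mild obstacle is confirming that $C\times K_2$ remains connected and hanging-tree-free. Connectedness holds because $C$ has an odd cycle, and the covering preserves minimum degree $\ge2$. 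I would verify this before concluding that the regular statements follow from the biregular results.
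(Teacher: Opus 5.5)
Your proposal is correct and follows the paper's own argument. The upper bound comes from the ambient ball-counting estimate with finite vertex stabilizers, and the realization of every value comes from \cref{thm:intro-stratification}(i) together with the trivial group; the equality \(\cA_{q,q}=\cA_q\) comes from passing to the connected canonical bipartite double cover, which keeps the degrees and the universal cover and hence \(\rho(B_{\hat C})=\rho(B_C)\), exactly as in \cref{cor:regular-finite-core-spectrum}. Your alternative justification, via the two-to-one factor map of non-backtracking edge shifts, is an equally valid way to get that last equality.
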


Indeed, a non-bipartite finite regular quotient may be replaced by its
canonical bipartite double cover without changing its universal cover or
its Hashimoto spectral radius.

\paragraph{Relation to previous work}
For a finite metric graph, volume entropy is equivalently characterized as
the topological entropy of its geodesic flow, and weighted edge lengths lead
to a finite pressure equation; see Lim \cite{Lim2008}.  The variation of
graph volume entropy under elementary modifications was studied by
Kim--Lim \cite{KimLim2020}.  Minimal entropy at fixed cyclomatic number has
also been investigated; a recent elementary treatment is due to Sabourau
\cite{Sabourau2026}.  Those works optimize entropy over metric or
combinatorial graphs.  The present work instead determines the spectra of
attainable values under two typed degree bounds, analyzes their fixed-rank
derived sets, and solves the rank-two inverse-realization problem.

The covering dictionary, the Hashimoto determinant, and the Ihara--Bass
formula are classical \cite{SerreTrees,Hashimoto1989,Bass1992,
StarkTerras1996,Terras2010}.
In the regular case, Kwon proved that every
\(\delta\in[0,\frac12\log q]\) is realized by an inversion-free discrete
subgroup of \(\Aut(\T_{q+1})\), using edge-indexed quotient graphs of groups
\cite{Kwon2019}.  This gives an earlier interval-realization result closely
related to the present full-interval theorem.  Here the class of actions is
restricted to free type-preserving actions, the ambient tree is allowed to be
biregular, and the spectrum is
further stratified by finite-state complexity and circuit rank.

The density of Hashimoto spectral radii of finite graphs under a single upper
degree bound follows from the configuration-model work of
Louvaris--Wise--Yehuda and from Tim\'ar's deterministic subdivision proof
\cite{LouvarisWiseYehuda2026,Timar2026}.  Louvaris--Wise--Yehuda also
construct subgroups of a free group with every growth rate by gluing
finite blocks along an infinite path with sufficiently long connecting arcs
\cite{LouvarisWiseYehuda2025EveryRate}.  More generally,
Coulon--Louvaris--Wise--Yehuda prove a full subgroup growth spectrum for
convex-cocompact free groups acting on proper hyperbolic spaces
\cite{CLWY2026}.  Applying their theorem to the cocompact deck action of
the free lattice
\(\pi_1(K_{s+1,r+1})<\Aut(\T_{r+1,s+1})\), where
\(K_{s+1,r+1}\) is the complete bipartite graph, recovers the unstratified
interval in logarithmic scale.  Thus the unrestricted interval by itself also
follows from their more general theorem.  We nevertheless include a direct,
self-contained typed construction in order to complete the stratified picture.
That theorem is not used below.

In another nearby direction, Lederle constructs boomerang subgroups of
a free group whose critical exponents are arbitrarily close to that of a
prescribed finitely generated subgroup \cite{Lederle2025}.  This approximation
problem is distinct from the exact typed realization and fixed-rank
stratification studied here.

Puder's extension of the cogrowth formula to arbitrary subsets of
regular and biregular trees concerns the relation between simple and
non-backtracking random-walk decay, rather than the subgroup
critical-exponent spectrum considered here \cite{Puder2024}.  This direction
is separate from the arguments below.  The proofs are self-contained relative
to these results: the typed density lemma below gives an adaptation of Tim\'ar's
subdivision argument, and the countable transfer estimate supplies a
self-contained sparse-gluing proof.

The main contributions are the finite-kernel
parametrization at fixed rank, finiteness at a prescribed rate, and the
rank-drop description \eqref{eq:intro-derived-set}.  They identify the
degeneration boundary of a finite-state entropy stratum.  At rank two,
explicit Ihara zeta calculations
are known \cite{KwonLee2020,ChicoMattmanRichards2025}; we use weighted
pressure to select the Perron root, incorporate the two vertex types and
their degree constraints, and obtain a complete and effective inverse test.
The unrestricted part supplies, in addition, an explicit typed realization
with separate degree bounds, degree-two attachment vertices (``ports'') in
both bipartite types, and a countable transfer-kernel estimate.  This
separates the previously known existence statements and zeta computations
from the stratified and typed inverse results of the paper.

\paragraph{Proof architecture and organization}
\Cref{sec:dictionary} establishes the covering and non-backtracking
dictionary.  In \cref{sec:stratification}, a parity-preserving typed density
lemma supplies finite blocks.  Sparse gluing gives the unrestricted
continuum, whereas finite generation gives the finite-core Hashimoto
spectrum and its arithmetic restrictions.  \Cref{sec:inverse} suppresses
degree-two chains, proves fixed-rank finiteness and rank drop, and derives
the complete rank-two inverse theorem.  The final section isolates the
unrestricted inverse problem and the derived-set questions suggested by
the rank stratification.

% =============================================================================
\section{Tree actions and non-backtracking finite-state dynamics}
\label{sec:dictionary}
% =============================================================================

Graphs are allowed to have parallel edges.  Loops occur only after kernel
suppression or in the untyped discussion.  A loop contributes two to the
vertex degree and has two oriented states.

\subsection{Critical exponents and ambient entropy}

Let a group \(\Gamma\) act by isometries on a locally finite tree \(X\).
For \(o\in X\), write
\[
  N_\Gamma(R;o)
  =\#\{\gamma\in\Gamma:d(o,\gamma o)\leq R\}.
\]
If the action is proper, then this number is finite for every \(R\), and we
define
\[
  \delta_\Gamma(X)
  =\limsup_{R\to\infty}\frac1R\log N_\Gamma(R;o).
\]
The notation \(\delta_\Gamma\) will be used when the ambient tree is fixed.

\begin{lemma}\label{lem:basepoint}
The critical exponent is independent of the base vertex.  If \(\Gamma\) is
a discrete subgroup of \(\Aut(X)\), then vertex stabilizers are finite, and
counting elements or distinct orbit points gives the same critical exponent.
\end{lemma}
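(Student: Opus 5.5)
The plan is to prove the two assertions separately, using only the triangle inequality and the local compactness of \(\Aut(X)\).

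\emph{Basepoint independence.} Fix vertices \(o,o'\) and set \(D=d(o,o')\). Since \(\gamma\) is an isometry, \(d(o,\gamma o)\le d(o,o')+d(o',\gamma o')+d(\gamma o',\gamma o)=d(o',\gamma o')+2D\). Hence \(N_\Gamma(R;o')\le N_\Gamma(R+2D;o)\). Dividing \(\log\) by \(R\) and noting \((R+2D)/R\to1\) gives \(\delta_\Gamma(o')\le\delta_\Gamma(o)\). Exchanging the roles of \(o\) and \(o'\) gives equality. The limsup passes through the rescaling because \(\frac1R\log N(R+2D)=\frac{R+2D}{R}\cdot\frac{1}{R+2D}\log N(R+2D)\). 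The finiteness of the counts is part of the next step, and a finite group gives exponent \(0\) trivially.

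\emph{Finite stabilizers.} In the compact-open topology on a locally finite tree, \(\operatorname{Stab}(o)\) is compact. The reason is that it is a closed subgroup of the inverse limit of the finite permutation groups of the balls \(B_n(o)\); equivalently, it is a profinite group. It is also open, since it is the basic neighbourhood determined by the single vertex \(o\). A discrete subgroup \(\Gamma\) meets the compact set \(\operatorname{Stab}(o)\) in a discrete, compact, hence finite set. So \(F=\Gamma_o\) is finite. This is the one point that needs the topology stated carefully: the identification of the compact-open topology with the permutation topology on vertices uses local finiteness.

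\emph{Elements versus orbit points.} The map \(\gamma\mapsto\gamma o\) is exactly \(|F|\)-to-one onto its image, since fibres are cosets \(\gamma F\). Let \(M(R)=\#\{\gamma o: d(o,\gamma o)\le R\}\). Then \(N_\Gamma(R;o)=|F|\,M(R)\). This also shows \(N\) is finite, because the balls are finite. Therefore \(\frac1R\log N=\frac1R\log M+\frac1R\log|F|\), and the two limsups coincide. Combined with the first step, the orbit-point count is also basepoint independent.
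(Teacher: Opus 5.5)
Your proposal is correct and follows essentially the same argument as the paper: the bound \(|d(o,\gamma o)-d(o',\gamma o')|\le 2d(o,o')\) shifts the counting radius by a bounded amount, and a compact open stabilizer meets the discrete group \(\Gamma\) in a finite set, so the orbit map has uniformly bounded fibres. You are simply more explicit: you show the fibres are exactly the cosets \(\gamma\Gamma_o\). You also implicitly use that \(\Gamma\cap\operatorname{Stab}(o)\) is compact, which holds because a discrete subgroup of a Hausdorff group is closed.
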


\begin{proof}
For vertices \(o,o'\in X\), the triangle inequality gives
\[
  \bigl|d(o,\gamma o)-d(o',\gamma o')\bigr|
  \leq2d(o,o').
\]
The two counting functions therefore differ only by a bounded shift of the
radius.  A vertex stabilizer in \(\Aut(X)\) is compact and open.  Its
intersection with a discrete subgroup is finite, so the orbit map has finite
fibers of uniformly bounded cardinality.
\end{proof}

If \(\Gamma\) is infinite, the Poincar\'e series
\[
  \cP_\Gamma(t)
  =\sum_{\gamma\in\Gamma}\exp\!\bigl(-t d(o,\gamma o)\bigr)
\]
converges for \(t>\delta_\Gamma\) and diverges for
\(0\leq t<\delta_\Gamma\).  Thus its abscissa of convergence on
\([0,\infty)\) agrees with the orbit-counting definition.  For finite
\(\Gamma\), the series is finite for every real \(t\), while our
orbit-counting convention assigns critical exponent zero.

Fix integers \(r,s\geq2\).  The biregular tree \(\T_{r+1,s+1}\) is the
unique bipartite tree with vertex partition
\[
  V(\T_{r+1,s+1})=V_0\sqcup V_1
\]
such that vertices in \(V_0\) have degree \(r+1\), while vertices in \(V_1\)
have degree \(s+1\).  For an integer \(j\geq0\), let
\(S_j(o)=\{v:d(o,v)=j\}\) be the sphere of radius \(j\), and for real
\(R\geq0\), let \(B_R(o)=\{v:d(o,v)\leq R\}\) be the closed ball.  If
\(o\in V_0\), direct counting gives, for \(n\geq1\),
\begin{align*}
  \#S_{2n}(o)&=(r+1)s(rs)^{n-1},\\
  \#S_{2n+1}(o)&=(r+1)(rs)^n.
\end{align*}
Hence the volume entropy is
\begin{equation}\label{eq:biregular-entropy}
  h(\T_{r+1,s+1})
  =\lim_{R\to\infty}\frac1R\log\#B_R(o)
  =\frac12\log(rs).
\end{equation}
The regular tree \(\T_{q+1}\) is \(\T_{q+1,q+1}\), so its entropy is
\(\log q\).

For every \(R\geq0\), the orbit points counted up to displacement
\(R\) lie in \(B_R(o)\). Since the vertex stabilizer
\(\Gamma_o:=\{\gamma\in\Gamma:\gamma o=o\}\) is finite,
\[
  N_\Gamma(R;o)\leq |\Gamma_o|\,\#B_R(o).
\]
We therefore obtain the universal bound
\begin{equation}\label{eq:ambient-bound}
  0\leq\delta_\Gamma(\T_{r+1,s+1})
  \leq\frac12\log(rs).
\end{equation}

If an abstract free group acts with trivial vertex stabilizers, then it has
no edge inversions: an inversion would have order two, whereas a free group
is torsion-free.

If \(r\neq s\), every automorphism of \(\T_{r+1,s+1}\) preserves the two
types.  In the regular case, type preservation is an additional condition.

\subsection{Covering graphs and reduced loops}

Let \(A\) be a connected graph, fix a base vertex \(o\in A\), and let
\(p:\widetilde A\to A\) be its universal covering.  The fundamental group
\(\pi_1(A,o)\) acts freely on \(\widetilde A\) by deck transformations.  In
the other direction, if a group \(\Gamma\) acts freely and without inversion
on a tree \(X\), then
\[
  X\longrightarrow \Gamma\backslash X
\]
is a graph covering and
\[
  \Gamma\cong\pi_1(\Gamma\backslash X).
\]
We use this covering-space dictionary throughout; see
\cite{SerreTrees,Stallings1983} for the corresponding viewpoints from
groups acting on trees and finite graph immersions.
For the broader graph-of-groups and tree-lattice framework, which also
allows nontrivial stabilizers, see Bass--Lubotzky \cite{BassLubotzky2001}.

If \(A\) is bipartite with vertex partition \(V(A)=A_0\sqcup A_1\), and
every vertex in \(A_0\), respectively \(A_1\), has degree \(r+1\),
respectively \(s+1\), then
\(\widetilde A\cong\T_{r+1,s+1}\).  This elementary observation will allow us
to construct actions by constructing quotient graphs.

For a graph \(A\), let \(\vec E(A)\) be its set of oriented edges.  If
\(e\in\vec E(A)\), write \(o(e)\), \(t(e)\), and \(\bar e\) for its origin,
terminus, and reverse.  An edge path \(e_1\cdots e_n\) is
\emph{non-backtracking} if
\[
  e_{i+1}\neq\bar e_i\qquad(1\leq i<n).
\]
For graph paths this is equivalent to being reduced.

\begin{proposition}[Orbit--loop correspondence]\label{prop:orbit-loop}
Let \(A\) be a connected based graph, let \(\widetilde o\) be a lift of its
base vertex, and identify the deck group with \(\pi_1(A,o)\).  For every
\(n\geq0\), there is a bijection between
\begin{enumerate}[label=(\roman*)]
  \item elements \(\gamma\in\pi_1(A,o)\) such that
    \(d(\widetilde o,\gamma\widetilde o)=n\), and
  \item reduced closed paths of length \(n\) in \(A\) based at \(o\).
\end{enumerate}
Consequently, the critical exponent of the deck action is the logarithm of
the exponential growth rate of based non-backtracking loops in \(A\).
\end{proposition}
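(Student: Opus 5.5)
The plan is the standard path-lifting argument for the universal covering \(p:\widetilde A\to A\), together with uniqueness of geodesics in a tree. We treat \(\pi_1(A,o)\) as the group of reduced closed edge paths at \(o\), with multiplication given by concatenation followed by cancellation of backtracks. This is the combinatorial fundamental group of \cite{SerreTrees,Stallings1983}. Every homotopy class of based loops contains exactly one reduced representative. We identify an element \(\gamma\) with the deck transformation sending \(\widetilde o\) to the endpoint of the lift of its representative loop starting at \(\widetilde o\). Since \(\widetilde A\) is a tree, being a universal cover of a graph, this identification is the one used in the statement.

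\textbf{Map from (i) to (ii).} Given \(\gamma\) with \(d(\widetilde o,\gamma\widetilde o)=n\), take the unique geodesic edge path \(\widetilde e_1\cdots\widetilde e_n\) in the tree \(\widetilde A\) from \(\widetilde o\) to \(\gamma\widetilde o\). In a tree, geodesic edge paths are exactly the non-backtracking ones. The covering map is a local isomorphism and satisfies \(p(\bar{\widetilde e})=\overline{p(\widetilde e)}\), so it preserves the non-backtracking condition. Hence \(p(\widetilde e_1)\cdots p(\widetilde e_n)\) is a reduced path of length \(n\). It is closed at \(o\) because \(p(\gamma\widetilde o)=o\).

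\textbf{Map from (ii) to (i).} Given a reduced closed path \(c\) of length \(n\) at \(o\), lift it uniquely to a path starting at \(\widetilde o\). Again because \(p\) is a local isomorphism, the lift is non-backtracking. In a tree a non-backtracking path is a geodesic, so the lift ends at a vertex \(\widetilde v\) with \(p(\widetilde v)=o\) and \(d(\widetilde o,\widetilde v)=n\). The deck group acts simply transitively on the fibre \(p^{-1}(o)\), so there is a unique \(\gamma\) with \(\gamma\widetilde o=\widetilde v\). Uniqueness of path lifting and uniqueness of geodesics in the tree show that the two maps are mutually inverse. Moreover, with the identification fixed above, this \(\gamma\) is exactly the class of \(c\).

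\textbf{Consequence for the critical exponent.} By the bijection, \(N_\Gamma(R;\widetilde o)\) equals the number of reduced closed paths at \(o\) of length at most \(R\). The critical exponent is therefore \(\limsup\frac1R\log\) of this cumulative loop count. This cumulative growth rate has the same exponential rate as the sphere counts whenever that rate is at least \(1\). The degenerate case, where the loop count stays bounded, gives \(\delta=0\), which matches our convention.

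\textbf{Main obstacle.} The only step needing care is that a non-backtracking edge path in a tree is geodesic. This uses the absence of cycles. The point to watch is that parallel edges and loops in \(A\) cause no trouble: they lift to distinct edges, and no edge of the tree is a loop. A loop of \(A\) has two oriented states whose lifts are distinct edges, so reducedness in \(A\) is read through \(\bar e\) rather than through endpoints. With that convention the argument goes through verbatim.
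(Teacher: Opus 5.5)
Your proposal is correct and follows essentially the paper's argument: each based class has a unique reduced representative, and its lift from \(\widetilde o\) is the tree geodesic to the corresponding deck translate, so lengths agree. You also make explicit the passage from sphere counts to the cumulative orbit count and the trivial-group case; the paper leaves both implicit.
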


\begin{proof}
Every based homotopy class of paths in a graph has a unique reduced
representative.  The lift of this representative from \(\widetilde o\) is the
geodesic segment from \(\widetilde o\) to the corresponding deck translate.
Reduction and geodesicity therefore preserve length.
\end{proof}

\subsection{Cores}

\begin{definition}
The based core \(\Core(A,o)\) is the smallest connected based subgraph of
\(A\) containing every reduced closed path based at \(o\).  After deleting a
possibly trivial initial stem from the basepoint, its nontrivial vertices
have degree at least two.  A closed path \(e_1\cdots e_n\) is
\emph{cyclically reduced} if it is reduced and
\(e_1\neq\overline{e_n}\).  The \emph{cyclic core} \(\Core(A)\) is the union
of the images of all cyclically reduced closed paths in \(A\), with the base
vertex allowed to vary; it is empty when \(A\) is a tree.  This union contains
both the embedded cycles and the bridge paths needed to travel between them,
but excludes hanging stems, and it is connected whenever it is nonempty.
Thus \(\Core(A,o)\) is obtained from \(\Core(A)\) by adjoining a shortest path
from \(o\) to \(\Core(A)\), whenever the cyclic core is nonempty.
\end{definition}

Removing a hanging tree does not change the fundamental group.  It also does
not change the exponential growth of reduced based loops, since a reduced
closed path cannot enter a tree attached at a single vertex and return.

\begin{lemma}[Finite core criterion]\label{lem:finite-core}
Let \(A\) be a connected locally finite graph.  Then \(\pi_1(A)\) is finitely
generated if and only if \(\Core(A)\) is finite.
\end{lemma}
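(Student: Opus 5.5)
We argue through the covering-space dictionary: \(\pi_1(A)\) is identified with the free group of a spanning-tree complement. If \(\Core(A)\) is empty, \(A\) is a tree and \(\pi_1(A)\) is trivial, so both conditions hold. Assume from now on that \(\Core(A)\) is nonempty. Fix a base vertex \(o\in\Core(A)\), so that \(\Core(A,o)=\Core(A)\). Removing hanging trees does not change the fundamental group, and every reduced closed path at \(o\) stays inside \(\Core(A,o)\). Hence the inclusion \(\Core(A)\hookrightarrow A\) induces an isomorphism \(\pi_1(\Core(A),o)\cong\pi_1(A,o)\). To see this, note that every based homotopy class has a unique reduced representative, and by definition that representative lies in the based core. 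The statement therefore reduces to a claim about the connected graph \(C=\Core(A)\), all of whose vertices have degree at least two in \(C\).

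For the direction "finite core \(\Rightarrow\) finitely generated", choose a spanning tree \(T\) of the finite connected graph \(C\). Then \(\pi_1(C,o)\) is free on the finitely many edges of \(C\) not in \(T\), so it has rank \(b_1(C)=|E(C)|-|V(C)|+1<\infty\). This direction is routine.

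For the converse, suppose \(C\) is infinite, and choose a spanning tree \(T\subseteq C\). We must show that \(E(C)\setminus E(T)\) is infinite, which makes \(\pi_1\) free of infinite rank and hence not finitely generated. This is the main step, and I would argue it by contradiction. Suppose only finitely many edges \(e_1,\dots,e_k\) of \(C\) lie outside \(T\). Let \(F\subset T\) be the finite subtree spanned by \(o\) and the endpoints of the \(e_i\). Since \(C\) is infinite and locally finite, \(T\) is infinite. Because \(T\) is locally finite, some component of \(T\setminus F\) is infinite; call it \(Z\). This component is attached to \(F\) by a single edge of \(T\), and no edge outside \(T\) meets \(Z\), since all such endpoints lie in \(F\). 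Therefore \(Z\) together with its attaching edge is a hanging tree of \(C\), and in particular it contains no edge of any cyclically reduced closed path.

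That contradicts the definition of \(C\). Every edge of \(C\) lies on the image of some cyclically reduced closed path. Such a path cannot enter a tree attached at a single vertex and return without backtracking, since a reduced path in a tree never revisits a vertex. The delicate point is to phrase precisely that each edge of \(Z\) is excluded from the cyclic core. I would make this precise by taking a reduced closed path through an edge of \(Z\), following it until it last leaves \(Z\), and observing that within the tree \(Z\cup\{\text{attaching edge}\}\) the path would have to reverse at a leaf or reverse the attaching edge. The first is impossible because vertices of \(C\) have degree at least two, and the second is impossible because the path is reduced. Hence \(C\setminus T\) is infinite, and \(\pi_1(A)\cong\pi_1(C)\) is free of infinite rank, which completes the argument.
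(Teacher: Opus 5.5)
Your proof is correct and follows essentially the paper's route: a spanning-tree free basis, finitely many non-tree edges confining every cyclically reduced closed path to a finite subgraph, and the isomorphism $\pi_1(A)\cong\pi_1(\Core(A))$ for the finite-core direction; your hanging-tree contradiction just spells out the paper's step ``consequently the cyclic core is finite.'' One small correction to your last paragraph: the path need not turn around at a leaf, since it may turn at any vertex of $Z$ (for instance one of maximal distance from the attaching vertex), and what excludes this is reducedness, not the degree-two bound, as your earlier remark that reduced paths in a tree never revisit a vertex already shows.
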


\begin{proof}
Choose a spanning tree \(S\subset A\).  The edges of \(A\setminus S\) form a
free basis of \(\pi_1(A)\), after adjoining the unique connecting paths in
\(S\).  Thus finite generation implies that only finitely many such edges
occur.  Their union with the finitely many connecting paths in \(S\) is a
finite connected subgraph carrying every cycle of \(A\); consequently the
cyclic core is finite.  Conversely, if \(\Core(A)=\varnothing\), then \(A\)
is a tree and \(\pi_1(A)\) is trivial.  If \(\Core(A)\neq\varnothing\),
deleting the hanging trees gives an isomorphism
\(\pi_1(A)\cong\pi_1(\Core(A))\), and the fundamental group of the finite
graph \(\Core(A)\) is finitely generated.
\end{proof}

\subsection{The Hashimoto matrix and topological entropy}

For a finite graph \(C\), its non-backtracking or Hashimoto matrix is the
\(\{0,1\}\)-matrix indexed by \(\vec E(C)\), defined by
\begin{equation}\label{eq:hashimoto}
  (B_C)_{e,f}
  =\begin{cases}
      1,&t(e)=o(f)\text{ and }f\neq\bar e,\\
      0,&\text{otherwise}.
    \end{cases}
\end{equation}
Powers of \(B_C\) count non-backtracking paths.  If \(C\) is connected, has
minimum degree at least two, and is not a cycle, then \(B_C\) is irreducible.
The two-sided non-backtracking edge shift is
\[
  \Sigma_C=\{(e_n)_{n\in\ZZ}: (B_C)_{e_n,e_{n+1}}=1\},
  \qquad \sigma_C((e_n)_{n\in\ZZ})=(e_{n+1})_{n\in\ZZ}.
\]
It is a subshift of finite type with adjacency matrix \(B_C\), and hence
\begin{equation}\label{eq:hashimoto-topological-entropy}
  h_{\mathrm{top}}(\Sigma_C,\sigma_C)=\log\rho(B_C);
\end{equation}
see, for example, \cite{Lind1984}.  Thus the spectral radius used throughout
is exactly the exponential of a topological entropy, not merely an auxiliary
linear-algebraic invariant.
Further finite-dimensional spectral properties of the non-backtracking
matrix, including information detected by its spectrum, are developed by
Glover--Kempton \cite{GloverKempton2021}.

\begin{proposition}\label{prop:finite-core-growth}
Let \(C\) be a finite connected core graph.  The exponential growth rate of
reduced paths in its universal cover issuing from a fixed vertex is
\(\rho(B_C)\); equivalently, this is the growth rate of non-backtracking paths
in \(C\) with a fixed admissible initial state.  If \(C\) is a cycle, this
value is \(1\).  Hence a deck group with finite quotient core \(C\) satisfies
\begin{equation}\label{eq:delta-rhoB}
  \delta_\Gamma=\log\rho(B_C).
\end{equation}
\end{proposition}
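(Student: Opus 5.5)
The plan has four steps: build a dictionary from reduced paths in the universal cover to non-backtracking walks, bound those walks above and below by entries of powers of \(B_C\), treat the cycle separately, and transfer the count to group elements via \cref{prop:orbit-loop}.

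\textbf{Step 1: the path dictionary.} Fix a vertex \(v\) of the universal cover \(\widetilde C\) and let \(p:\widetilde C\to C\) be the covering. A reduced path of length \(n\ge1\) issuing from \(v\) is determined by its first oriented edge, which is any of the \(\deg p(v)\) oriented edges of \(C\) leaving \(p(v)\). It then projects bijectively, by unique path lifting, onto a non-backtracking walk \(e_1\cdots e_n\) in \(C\). Such walks with prescribed \(e_1\) number \(\sum_f (B_C^{\,n-1})_{e_1,f}\). So the number of reduced paths of length \(n\) from \(v\) is
\[
  \mathbf 1_{p(v)}^{\top} B_C^{\,n-1}\mathbf 1,
\]
where \(\mathbf 1_{p(v)}\) is the indicator of the oriented edges out of \(p(v)\). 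Since \(\widetilde C\) is a tree, these reduced paths are in bijection with the vertices of the sphere \(S_n(v)\).

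\textbf{Step 2: the growth rate, assuming \(C\) is not a cycle.} Then \(B_C\) is irreducible, as recalled before \eqref{eq:hashimoto}. The upper bound is immediate: each entry of \(B_C^{\,n-1}\mathbf 1\) is at most \(\|B_C^{\,n-1}\|\), and Gelfand's formula gives \(\|B_C^{\,n-1}\|^{1/n}\to\rho(B_C)\).

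For the lower bound, let \(u>0\) be the Perron right eigenvector; it is strictly positive by irreducibility. Then
\[
  B_C^{\,n-1}\mathbf 1\ \ge\ \frac{1}{\max u}\,B_C^{\,n-1}u
  =\frac{\rho(B_C)^{n-1}}{\max u}\,u
  \ \ge\ \frac{\min u}{\max u}\,\rho(B_C)^{n-1}\mathbf 1 .
\]
So for every fixed initial state the count is at least \(c\,\rho(B_C)^{n-1}\) with \(c>0\). This is the step needing care: the lower bound must hold for every initial state, not just along a subsequence, and positivity of \(u\) handles it without invoking aperiodicity. Summing spheres then shows that \(\#B_R(v)\) grows at exact rate \(\rho(B_C)\), giving \(h(\widetilde C)=\log\rho(B_C)\).

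\textbf{Step 3: the cycle.} If \(C\) is a cycle, \(B_C\) is a permutation matrix on the \(2|E(C)|\) oriented edges, so \(\rho(B_C)=1\). Also \(\widetilde C\) is a line, whose spheres have at most \(2\) vertices, so the growth rate is \(1\), as claimed.

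\textbf{Step 4: the deck group.} Let \(\Gamma\) have quotient \(A=\Gamma\backslash X\) with finite core \(C\). By \cref{prop:orbit-loop}, \(\delta_\Gamma\) is the growth rate of reduced closed paths in \(A\) based at \(o\). Hanging trees are irrelevant, as noted after the definition of cores, so we may take \(o\in C\) (a stem changes lengths by a bounded amount) and count reduced closed paths in \(C\) at \(o\).

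The upper bound comes from Step 1: every such path is a non-backtracking walk from \(o\), so the count is at most \(\mathbf 1_o^{\top}B_C^{\,n-1}\mathbf 1\). For the lower bound, close up walks. By irreducibility and finiteness, there is a constant \(m\) such that for each state \(f\) there is a non-backtracking connecting path of length at most \(m\) from \(f\) to some edge ending at \(o\). It can be chosen so that it does not begin with \(\bar f\), and its final edge is not the reverse of the first edge \(e_1\).

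The main obstacle is making this closing-up injective up to bounded multiplicity while keeping every closed path reduced. To handle the last constraint, I would fix two distinct outgoing edges at \(o\) and restrict \(e_1\) to one of them; both lie in the core since \(\deg\ge2\). Each walk of length \(n\) then extends to a reduced closed path of length between \(n\) and \(n+m\). The map is at most \((m+1)\)-to-one after recording the length, so the count of reduced closed paths is at least \(c'\rho(B_C)^{n}\) along lengths in \([n,n+m]\). This gives
\[
  \limsup_{R\to\infty}\frac1R\log N_\Gamma(R;o)=\log\rho(B_C),
\]
which is \eqref{eq:delta-rhoB}.
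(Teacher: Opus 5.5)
Your proof is correct. Its overall structure matches the paper's: count non-backtracking walks by powers of \(B_C\), apply Perron--Frobenius, then use the orbit--loop correspondence after moving the basepoint into \(C\). The difference is only in how you get the lower bound. The paper writes the number of reduced based loops as the entry sum \(\sum_{o(e)=o}\sum_{t(f)=o}(B_C^{\,n-1})_{e,f}\). It then cites the Perron--Frobenius fact that every nonzero finite sum of entries of powers of an irreducible matrix has limsup exponential rate \(\rho(B_C)\), and the limsup absorbs the period. You instead take uniform two-sided bounds on row sums from the strictly positive Perron vector and Gelfand's formula. You then transfer the row-sum lower bound to based loops with your bounded-length closing map, which is injective for fixed \(n\) and lands in lengths at most \(n+m\). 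This amounts to a hands-on proof of the entrywise statement the paper quotes. Your route is more self-contained and gives genuine limits, for instance \(N_\Gamma(n+m;o)\geq c\,\rho(B_C)^{n-1}\), rather than only limsups; the paper's route is shorter.

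Two small points:
\begin{itemize}
\item Requiring the closing path not to end with \(\bar e_1\) is unnecessary, since the orbit--loop correspondence counts reduced based loops, not cyclically reduced ones. It is harmless, because it only shrinks the set you count.
\item When \(C\) is a cycle, \(B_C\) is not irreducible. Step 4 should there rely only on the upper bound together with the trivial bound \(\delta_\Gamma\geq0\).
\end{itemize}
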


Related spectral properties of non-backtracking operators on universal
covers are developed in \cite{AngelFriedmanHoory2015}.
Eisner--Hoory further study the relation between the growth rate of a
universal covering tree and the entropy rate associated with non-backtracking
random walk \cite{EisnerHoory2026}.

\begin{proof}
For a non-cyclic core, irreducibility and Perron--Frobenius theory show that
the limsup exponential rate of every admissible entry of \(B_C^n\), and of
every nonzero finite sum of such entries, is \(\rho(B_C)\).  This formulation
allows for the period of \(B_C\).  More explicitly, if the base vertex
\(o\) lies in \(C\), then the number of non-backtracking paths of length
\(n\geq1\) starting at \(o\) is
\[
  \sum_{\substack{e\in\vec E(C)\\o(e)=o}}
  \sum_{f\in\vec E(C)}
  (B_C^{n-1})_{e,f},
\]
and therefore has limsup exponential rate \(\rho(B_C)\).  Similarly, the
number of reduced based loops of length \(n\geq1\) is
\[
  \sum_{\substack{e\in\vec E(C)\\o(e)=o}}
  \sum_{\substack{f\in\vec E(C)\\t(f)=o}}
  (B_C^{n-1})_{e,f}.
\]
Its limsup exponential rate is therefore \(\rho(B_C)\).  If the chosen base
vertex of the quotient
lies outside \(C\), \cref{lem:basepoint} allows us to move it to \(C\); the
discarded initial stem contributes only a bounded additive change in length.
The orbit--loop correspondence then gives \eqref{eq:delta-rhoB}.  A cycle has
two deterministic non-backtracking directions and spectral radius one.
\end{proof}

We shall also need a weighted form of the same counting argument before the
kernel reduction of \cref{sec:inverse}.  If
\(\ell:E(C)\to\mathbb N\) assigns a positive integral length to every
unoriented edge, let \(C(\ell)\) be the graph obtained by replacing each edge
\(e\) by a path of length \(\ell(e)\), and put
\begin{equation}\label{eq:early-weighted-turn}
  W_{C,\ell}(x)
  =B_C\operatorname{diag}\bigl(x^{-\ell(f)}:
      f\in\vec E(C)\bigr),
  \qquad x>1,
\end{equation}
where \(\ell(\bar f)=\ell(f)\).

\begin{lemma}[Weighted subdivision pressure]
\label{lem:weighted-subdivision-pressure}
Let \(C\) be a finite connected non-cyclic core.  There is a unique
\(\alpha_{C,\ell}>1\) such that
\begin{equation}\label{eq:early-weighted-pressure}
  \rho\bigl(W_{C,\ell}(\alpha_{C,\ell})\bigr)=1,
\end{equation}
and
\[
  \alpha_{C,\ell}=\rho(B_{C(\ell)}).
\]
If \(\ell'\geq\ell\) coordinatewise and \(\ell'\neq\ell\), then
\begin{equation}\label{eq:strict-subdivision-pressure}
  \rho(B_{C(\ell')})<\rho(B_{C(\ell)}).
\end{equation}
In particular, subdividing any nonempty set of edges of a finite connected
non-cyclic core strictly decreases the entropy of its universal cover.
\end{lemma}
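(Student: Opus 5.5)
The plan is to prove the three assertions of \cref{lem:weighted-subdivision-pressure} in order: existence and uniqueness of the root, the identification \(\alpha_{C,\ell}=\rho(B_{C(\ell)})\), and then strict monotonicity, which will follow from the first two almost formally.

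\textbf{Existence and uniqueness.} Since \(C\) is connected, non-cyclic and has minimum degree at least two, \(B_C\) is irreducible. Right multiplication by a positive diagonal matrix keeps the same zero pattern, so \(W_{C,\ell}(x)\) is irreducible for every \(x>1\). Set \(\phi(x)=\rho(W_{C,\ell}(x))\). This function is continuous in \(x\). It is also strictly decreasing: every entry is nonincreasing, and every nonzero entry strictly decreases, because \(\ell\geq1\). For an irreducible nonnegative matrix, a decrease of that kind strictly lowers the spectral radius; this is the Perron--Frobenius comparison, applied via a positive Perron vector. As \(x\to1^+\), \(\phi(x)\to\rho(B_C)\), which is greater than \(1\) for a non-cyclic core. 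I would justify that bound by observing that some vertex has degree at least \(3\), so the number of non-backtracking paths grows exponentially. As \(x\to\infty\), \(\phi(x)\to0\). The intermediate value theorem therefore gives a unique root \(\alpha_{C,\ell}>1\).

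\textbf{Identification with \(\rho(B_{C(\ell)})\).} I would apply the Ihara--Bass style path decomposition, or argue directly with generating functions. States of \(B_{C(\ell)}\) lying inside a subdivided edge have out-degree one. Hence a non-backtracking closed walk in \(C(\ell)\) is the same thing as a cyclic sequence of kernel turns \(f_1\to f_2\to\cdots\) in \(C\), and its length is \(\sum\ell(f_i)\). This gives the matrix identity
\[\det\bigl(I-uB_{C(\ell)}\bigr)=\det\bigl(I-B_C\operatorname{diag}(u^{\ell(f)})\bigr).\]
The cleanest route is a Schur-complement elimination of the chain states: the chain blocks are nilpotent shifts, so eliminating them contributes determinant \(1\). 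The spectral radius \(\rho(B_{C(\ell)})\) is the reciprocal of the smallest positive zero \(u_0\) of the left side. On the right side, the smallest positive zero is the first \(u\) with \(\rho(B_C\operatorname{diag}(u^{\ell}))=1\). This holds because \(u\mapsto\rho(B_C\operatorname{diag}(u^{\ell}))\) increases strictly from \(0\), and at its value \(1\) the Perron eigenvalue produces a zero; below that point every eigenvalue has modulus less than \(1\), so there is no zero. Setting \(u=x^{-1}\) gives \(\rho(B_{C(\ell)})=\alpha_{C,\ell}\). Alternatively, I could compare generating functions of path counts, using the formula in the proof of \cref{prop:finite-core-growth}.

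\textbf{Strict monotonicity.} If \(\ell'\geq\ell\) and \(\ell'\neq\ell\), then for every \(x>1\) we have \(W_{C,\ell'}(x)\leq W_{C,\ell}(x)\) entrywise. At least one column, namely that of an edge \(f\) with \(\ell'(f)>\ell(f)\), contains a nonzero entry that strictly decreases. That column is nonzero by irreducibility. Strict Perron--Frobenius monotonicity then gives
\[\rho\bigl(W_{C,\ell'}(\alpha_{C,\ell})\bigr)<1.\]
Since \(\phi_{\ell'}\) is decreasing, its root \(\alpha_{C,\ell'}\) must be smaller than \(\alpha_{C,\ell}\). Combined with the identification step, this yields \eqref{eq:strict-subdivision-pressure}. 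The final sentence of the lemma is the special case \(\ell=\mathbf 1\).

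The main obstacle is the identification step, specifically the determinant reduction and the claim that the first root corresponds to the Perron eigenvalue. Periodicity of \(B_C\) can place other eigenvalues on the circle of radius \(\rho\). These only add further zeros of equal modulus, though, and the smallest positive zero is still governed by the Perron root, so I expect this to work out.
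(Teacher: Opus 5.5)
Your proposal is correct. The existence--uniqueness and strict-monotonicity steps match the paper's: irreducible support, strict Perron--Frobenius comparison, the intermediate value theorem, and then comparing \(W_{C,\ell'}(\alpha_{C,\ell})\) with \(W_{C,\ell}(\alpha_{C,\ell})\).

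The genuine difference is in the identification \(\alpha_{C,\ell}=\rho(B_{C(\ell)})\). The paper argues dynamically. It cuts non-backtracking paths of \(C(\ell)\) at original vertices, so the weighted path series of \(C\) is the length-generating series of \(C(\ell)\) up to finitely many boundary pieces. That series converges exactly when \(\rho(W_{C,\ell}(x))<1\), and the value-one point is then read off as the path growth rate of \(C(\ell)\), which is \(\rho(B_{C(\ell)})\) by \cref{prop:finite-core-growth}. You instead prove the exact polynomial identity \(\det(I-uB_{C(\ell)})=\det\bigl(I-B_C\operatorname{diag}(u^{\ell(f)})\bigr)\) and match smallest positive zeros on both sides. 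Your Schur-complement derivation checks out: the eliminated chain block is nilpotent, so it has determinant one, and the complement collects exactly the weights \(u^{\ell(f)}\).

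Your periodicity worry is moot. On the right side you only use that \(\rho\bigl(B_C\operatorname{diag}(u^{\ell})\bigr)<1\) for \(u<u^*\). On the left side, peripheral eigenvalues \(\rho\zeta\) with \(\zeta\neq1\) give zeros that are not positive reals.

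Your route actually yields more than the lemma. Take \(C=K\) and \(u=x^{-1}\), and note \(\sum_{f\in\vec E(K)}\ell(f)=|\vec E(K(\ell))|\). Then the kernel polynomial \(F_{K,\ell}(x)=\det(D_\ell(x)-T_K)\) of \cref{thm:kernel-pressure} is exactly \(\det(xI-B_{K(\ell)})\). So the entire non-backtracking spectrum of the subdivision, not just its Perron root, is carried by the weighted turn matrix. The paper's route, for its part, is the one that exhibits \(\rho(W_{C,\ell}(x))=1\) directly as the suspension pressure equation \(P(-h\ell)=0\).

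One small point needs an extra line: your justification of \(\rho(B_C)>1\) via exponential growth at a degree-three vertex. The paper's version is cleaner. Every row sum of \(B_C\) is at least one and some row sum is at least two, so \(B_C\mathbf1\geq\mathbf1\) with strict inequality somewhere. Pairing this with a positive left Perron vector of the irreducible \(B_C\) forces \(\rho(B_C)>1\).
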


Equivalently, \(\log\alpha_{C,\ell}\) is the topological entropy of the
suspension of the non-backtracking edge shift with roof function \(\ell\),
and \eqref{eq:early-weighted-pressure} is the pressure equation
\(P(-h\ell)=0\); compare \cite{Lim2008}.

\begin{proof}
Cut a non-backtracking path in \(C(\ell)\) whenever it reaches an original
vertex of \(C\).  Its successive original edges form a non-backtracking path
in \(C\), and traversing the next oriented edge \(f\) contributes length
\(\ell(f)\).  Conversely, every such weighted path in \(C\) has a unique
expansion in \(C(\ell)\).  Allowing the two endpoints to lie at subdivision
vertices adds only one of finitely many initial and terminal subpaths.  Hence
the weighted path series converges exactly when
\(\rho(W_{C,\ell}(x))<1\).  Consequently, the unique value of \(x\) for which
the weighted spectral radius equals one is the non-backtracking growth rate
of \(C(\ell)\).

The support of \(W_{C,\ell}(x)\) is the irreducible support of \(B_C\).
At \(x=1\), irreducibility and
\(B_C\mathbf 1\geq\mathbf 1\), where \(\mathbf1\) is the all-ones vector,
with strict inequality in at least one
coordinate because \(C\) is not a cycle, give \(\rho(B_C)>1\).  As
\(x\to\infty\), all entries tend to zero.  Strict Perron--Frobenius
comparison shows that
\(x\mapsto\rho(W_{C,\ell}(x))\) is continuous and strictly decreasing.
This proves existence, uniqueness, and the first assertion.

Now let \(\alpha=\alpha_{C,\ell}\).  If \(\ell'\geq\ell\) and at least one
length increases, then
\[
  0\leq W_{C,\ell'}(\alpha)
  \leq W_{C,\ell}(\alpha),
  \qquad
  W_{C,\ell'}(\alpha)\neq W_{C,\ell}(\alpha),
\]
while both matrices have the same irreducible support.  Thus
\(\rho(W_{C,\ell'}(\alpha))<1\).  Since the new weighted spectral radius is
strictly decreasing in \(x\), its value-one point is smaller than \(\alpha\),
which proves \eqref{eq:strict-subdivision-pressure}.
\end{proof}

If \(V(C)=C_0\sqcup C_1\) is a bipartition with the biregular degree bounds from
\eqref{eq:Ars-intro}, two non-backtracking steps have at most \(rs\)
continuations, and therefore
\begin{equation}\label{eq:biregular-rho-bound}
  1\leq\rho(B_C)\leq\sqrt{rs}.
\end{equation}
Setting \(r=s=q\) gives, as a regular-tree corollary,
\begin{equation}\label{eq:regular-rho-bound}
  1\leq\rho(B_C)\leq q
\end{equation}
whenever \(2\leq\deg_C(v)\leq q+1\).

% =============================================================================
\section{Global and finite-state entropy spectra}
\label{sec:stratification}

This section proves \cref{thm:intro-stratification}.  We first construct
finite typed cores whose rates approximate a prescribed target.  We then
place these cores along a parity-controlled bi-infinite spine and choose the
connecting corridors so long that mixed block transitions do not change the
target exponent.  The same finite cores give the exact finitely generated
spectrum.  The logical order of the unrestricted construction is summarized
in \cref{fig:sparse-proof-roadmap}.

\begin{figure}[H]
\centering
\begin{tikzpicture}[
  flow/.style={draw=blue!62!black,fill=blue!4,rounded corners=3pt,
    minimum height=.78cm,text width=4.05cm,align=center,font=\small,
    line width=.8pt},
  arr/.style={-{Stealth[length=5pt]},line width=.9pt,blue!68!black},
  lab/.style={font=\scriptsize,text=gray!70!black,fill=white,inner sep=1.2pt}
]
  \node[flow] (target) {target exponent\\ \(\delta\)};
  \node[flow,right=1.1cm of target] (cores)
    {double-ported cores\\ rates \mbox{increasing} to \(\delta\)};
  \node[flow,right=1.1cm of cores] (spine)
    {typed spine\\ prescribed corridor parities};
  \node[flow,below=1.1cm of spine] (corridors)
    {long typed corridors\\ summable transitions};
  \node[flow,left=1.1cm of corridors] (action)
    {free quotient action\\ \(\delta_\Gamma=\delta\)};

  \draw[arr] (target)--node[lab,above=0.5mm]{density} (cores);
  \draw[arr] (cores)--node[lab,above=0.5mm]{assign} (spine);
  \draw[arr] (spine)--node[lab,right=0.5mm]{separate} (corridors);
  \draw[arr] (corridors)--node[lab,above=0.5mm]{transfer} (action);
\end{tikzpicture}
\caption{Proof roadmap for the unrestricted free stratum.  The first arrow
supplies intrinsic block growth; the remaining arrows control how the blocks
are assembled without changing the target exponent.}
\label{fig:sparse-proof-roadmap}
\end{figure}

\subsection{Typed finite-core density}

A \emph{double-ported typed core} is a triple \((C,p_0,p_1)\), where
\(C=C_0\sqcup C_1\) is a finite connected non-cyclic bipartite core,
\[
  \deg_C(v)\leq r+1\quad(v\in C_0),
  \qquad
  \deg_C(v)\leq s+1\quad(v\in C_1),
\]
and \(p_\varepsilon\in C_\varepsilon\) has degree two for
\(\varepsilon=0,1\).  Put
\[
  \eta(C)=\log\rho(B_C)=h(\widetilde C),
\]
in accordance with the notation fixed in the introduction.

For edges \(e,f\) of a graph \(Y\), let \(d_E(e,f)\) denote their distance
in the line graph of \(Y\).  A set \(\mathcal E\subseteq E(Y)\) is
\emph{\(L\)-separated} if
\(d_E(e,f)\geq L+1\) whenever \(e\neq f\) belong to \(\mathcal E\).

\begin{lemma}[Separated subdivision estimate]
\label{lem:separated-subdivision}
Let \(T\) be a bounded-degree tree and put
\(\lambda(T)=\exp(h(T))\).  Let \(\mathcal E\subseteq E(T)\) be
\(L\)-separated, and let \(T_{\mathcal E}\) be obtained by subdividing every
edge of \(\mathcal E\) once.  Then
\begin{equation}\label{eq:separated-subdivision-estimate}
  \lambda(T)^{(L+1)/(L+2)}
  \leq \lambda(T_{\mathcal E})
  \leq \lambda(T).
\end{equation}
In particular, if \(\lambda(T)\leq\Lambda\), then
\begin{equation}\label{eq:separated-subdivision-log-change}
  0\leq h(T)-h(T_{\mathcal E})
  \leq\frac{\log\Lambda}{L+2}.
\end{equation}
\end{lemma}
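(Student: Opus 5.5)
Both inequalities will come from comparing balls in \(T\) and in \(T_{\mathcal E}\) through the natural correspondence of vertices. Every vertex of \(T\) is a vertex of \(T_{\mathcal E}\), and \(T_{\mathcal E}\) has only the extra midpoints of the subdivided edges. Distances are changed in a controlled way. If the geodesic \([u,v]\) in \(T\) has length \(n\) and crosses \(k\) edges of \(\mathcal E\), then its length in \(T_{\mathcal E}\) is \(n+k\).

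For the upper bound, note that each midpoint is adjacent to an original vertex. Also, original distances never shrink. Hence
\[
\#B^{T_{\mathcal E}}_R(o)\leq 2\,\#B^{T}_R(o)+\#E\text{-midpoints near }o,
\]
and more cleanly \(\#B^{T_{\mathcal E}}_R(o)\leq (1+D)\#B^T_R(o)\), where \(D\) bounds the degree. Taking \(\limsup\frac1R\log\) gives \(\lambda(T_{\mathcal E})\leq\lambda(T)\).

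For the lower bound, the key observation is a counting consequence of \(L\)-separation. Along a geodesic edge path \(e_1\cdots e_n\) in \(T\), any two edges of \(\mathcal E\) that occur must be at line-graph distance at least \(L+1\). In a tree, the line-graph distance between \(e_i\) and \(e_j\) on a geodesic is \(|i-j|\), so consecutive occurrences have indices differing by at least \(L+1\). A block of \(L+1\) consecutive path edges therefore contains at most one edge of \(\mathcal E\), which gives \(k\leq\lceil n/(L+1)\rceil\).

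The bound on \(k\) converts to a length estimate in \(T_{\mathcal E}\): every vertex of \(B^T_n(o)\) lies in \(B^{T_{\mathcal E}}_{n+\lceil n/(L+1)\rceil}(o)\). Set \(R=n(L+2)/(L+1)+1\). Then \(\#B^{T_{\mathcal E}}_R(o)\geq\#B^T_n(o)\), and hence
\[
h(T_{\mathcal E})\geq \limsup_n \frac{\log\#B^T_n(o)}{n(L+2)/(L+1)+1}=\frac{L+1}{L+2}h(T).
\]
The limsup along the subsequence \(R_n\) bounds the full limsup from below, since \(R_n\) grows linearly in \(n\). Exponentiating gives the left side of \eqref{eq:separated-subdivision-estimate}.

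The log form \eqref{eq:separated-subdivision-log-change} follows directly: \(h(T)-h(T_{\mathcal E})\leq h(T)/(L+2)\leq\log\Lambda/(L+2)\).

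The main obstacle is the careful separation bookkeeping. One must get the exact threshold \((L+1)/(L+2)\) rather than a weaker constant from the ceiling and boundary effects, and check that "\(L\)-separated" in the line-graph metric really forces a gap of \(L+1\) edges along a geodesic. Off-by-one issues only perturb the radius by additive constants, which vanish in the limit. The inequality \(k\leq n/(L+1)+1\) then suffices, since the \(+1\) is absorbed into the radius.
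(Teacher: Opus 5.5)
Your proposal is correct and follows essentially the same argument as the paper. The lower bound comes from counting at most about \(n/(L+1)+1\) separated edges along a geodesic and comparing balls with an additive radius shift. The upper bound comes from the fact that distances between original vertices do not decrease, together with assigning each midpoint to an adjacent original vertex and using the degree bound. Your explicit check that the line-graph distance along a tree geodesic equals the index difference is a detail the paper leaves implicit.
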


\begin{proof}
Use an original vertex as basepoint.  A geodesic of length \(m\) in \(T\)
contains at most
\[
  1+\frac{m}{L+1}
\]
edges of \(\mathcal E\).  The corresponding path in
\(T_{\mathcal E}\) therefore has length at most
\[
  m+1+\frac{m}{L+1}
  =\frac{L+2}{L+1}m+1.
\]
Comparison of balls and passage to the limsup in the definition of volume
entropy gives the lower bound in
\eqref{eq:separated-subdivision-estimate}.  Conversely, contraction of the
inserted degree-two vertices does not increase distance.  Assigning every
new vertex to an adjacent original vertex and using the degree bound shows
that a ball in \(T_{\mathcal E}\) has at most a fixed multiple of the
corresponding ball in \(T\).  This proves the upper bound.  Taking logarithms
and using \(h(T)\leq\log\Lambda\) gives
\eqref{eq:separated-subdivision-log-change}.
\end{proof}

\begin{lemma}[Finite covers with separated lifted color classes]
\label{lem:separated-lift-coloring}
Let \(G\) be a finite connected graph and let \(D\geq1\).  There are a
finite connected cover \(H\to G\) and a finite coloring of \(E(H)\) such
that, for the universal covering map \(p:\widetilde H\to H\), the set
\[
  p^{-1}\{e\in E(H):e\text{ has color }c\}
\]
is \(D\)-separated in \(\widetilde H\) for every color \(c\).
If \(G\) is bipartite, then so is \(H\).
\end{lemma}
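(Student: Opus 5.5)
The plan is to use residual finiteness of the free group \(\pi_1(G)\) to produce a deep finite cover. We then colour its edges by a proper colouring of a power of the line graph.

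\textbf{Step 1: a cover with large injectivity radius.} Since \(\pi_1(G)\) is free, hence residually finite, we choose a finite-index normal subgroup \(N\lhd\pi_1(G)\) avoiding every nontrivial element of translation length at most \(2D+2\) on \(\widetilde G\). There are only finitely many such elements up to the choice of basepoint, and finitely many basepoints suffice, since we may take reduced loops based at each vertex. The cover \(H=N\backslash\widetilde G\) is then finite and connected. Its universal cover is \(\widetilde H=\widetilde G\), and every nontrivial deck transformation \(\gamma\in N\) moves every vertex, and hence every edge, by line-graph distance greater than \(2D\).

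If \(G\) is bipartite, every closed path in \(G\) has even length, so every closed path in \(H\) projects to one of even length. Hence \(H\) is bipartite. Alternatively, intersecting \(N\) with the kernel of the parity character changes nothing.

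\textbf{Step 2: colouring.} We form the auxiliary graph \(\Lambda\) on \(E(H)\), joining \(e\neq e'\) when some lifts satisfy \(d_E(\tilde e,\tilde e')\le D\) in \(\widetilde H\). This graph is finite with bounded degree; here we use that \(H\) is finite, although local finiteness would already suffice. Take a proper colouring of \(\Lambda\) with at most \(\Delta(\Lambda)+1\) colours, for instance greedily.

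\textbf{Step 3: verification.} Let \(\tilde e\neq\tilde f\) lie in \(p^{-1}\) of a single colour class, with images \(e=p(\tilde e)\) and \(f=p(\tilde f)\).
\begin{itemize}
\item If \(e\neq f\), then properness of the colouring forbids \(d_E(\tilde e,\tilde f)\le D\) for any lifts.
\item If \(e=f\), then \(\tilde f=\gamma\tilde e\) for some nontrivial \(\gamma\in N\), and Step 1 gives \(d_E(\tilde e,\gamma\tilde e)>2D\ge D\).
\end{itemize}
In either case \(d_E\ge D+1\), so each lifted colour class is \(D\)-separated.

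\textbf{Main obstacle.} The only delicate point is Step 1, namely translating displacement in the line graph into translation length of group elements. If \(d_E(\tilde e,\gamma\tilde e)\le D\), then there is a vertex \(v\) of \(\tilde e\) with \(d(v,\gamma v)\le d(v,\gamma\tilde e\text{-endpoint})+1\le D+2\). So it suffices to exclude all nontrivial \(\gamma\) with \(d(v,\gamma v)\le D+2\) for \(v\) ranging over a finite set of lifts of the vertices of \(G\). This is a finite set of group elements, and residual finiteness handles it by intersecting finitely many finite-index normal subgroups.
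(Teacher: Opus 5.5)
Your proposal is correct and takes essentially the paper's route. Residual finiteness of \(\pi_1(G)\) gives a finite cover whose nontrivial deck transformations move every vertex far; the paper states this as girth greater than \(4D+10\) and injectivity of \(p\) on balls of radius \(D+2\). Your auxiliary graph \(\Lambda\) is the \(D\)-th power of the line graph of \(H\) that the paper colours, and the two-case check on whether \(p(\tilde e)=p(\tilde f)\) is the same. One step deserves to be written out: reducing to finitely many base vertices in your final paragraph uses normality of \(N\). If \(v=hv_0\), then \(d(v,\gamma v)=d(v_0,h^{-1}\gamma h\,v_0)\), with \(h^{-1}\gamma h\in N\setminus\{1\}\).
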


\begin{proof}
If \(G\) is a tree, take \(H=G\) and give every edge a different color.
Otherwise \(\pi_1(G)\) is a finitely generated free group.  Residual
finiteness supplies a finite-index normal subgroup which contains no element
conjugate to a nontrivial cyclically reduced closed path of length at most
\(4D+10\): there are only finitely many such conjugacy classes, and one
intersects finite-index normal subgroups avoiding one representative from
each.  This is the standard residual-finiteness construction for a
finitely generated free group; equivalently, it may be expressed using
finite graph covers as in \cite{Stallings1983}.  If the corresponding finite
connected cover \(H\to G\) contained a closed reduced circuit of length at
most \(4D+10\), its projection would represent one of the excluded
conjugacy classes.  Hence \(H\) has girth greater than \(4D+10\).  Two
distinct lifts inside a ball of radius \(D+2\) would project to the same
vertex only if their joining path projected to a nontrivial reduced circuit
of length at most \(2(D+2)<4D+10\).  Thus the universal covering map is
injective on every such ball.

The vertices of the line graph of \(H\) are the edges of \(H\).  Properly
color the \(D\)-th power of this finite line graph, where two vertices are
adjacent when their line-graph distance is at most \(D\).  Consider two
distinct lifted edges \(\widetilde e,\widetilde f\) of the same color.  If
\(p(\widetilde e)\neq p(\widetilde f)\), a line-graph path of length at most
\(D\) upstairs would project to one downstairs, contradicting the coloring.
If \(p(\widetilde e)=p(\widetilde f)\) and their line-graph distance were at
most \(D\), both lifted edges would lie in a ball of radius \(D+2\) on which
\(p\) is injective, again a contradiction.  Thus every lifted color class is
\(D\)-separated.  Finally, every cover of a bipartite graph is bipartite.
\end{proof}

\begin{lemma}[Typed double-ported density]
\label{lem:ported-density}
For \(r,s\geq2\), the numbers \(\eta(C)\) obtained from double-ported typed
cores are dense in \((0,\frac12\log(rs))\).  Consequently, for each
\(\delta\in(0,\frac12\log(rs)]\) there is a sequence
\((\widehat C_n,\widehat p_{n,0},\widehat p_{n,1})\) of such cores satisfying
\begin{equation}\label{eq:ported-approximation}
  0<\eta(\widehat C_n)<\delta,
  \qquad
  \eta(\widehat C_n)\nearrow\delta.
\end{equation}
\end{lemma}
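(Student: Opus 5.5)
The plan is to adapt Tim\'ar's subdivision argument to the typed setting, using \cref{lem:separated-lift-coloring} and \cref{lem:separated-subdivision} as the two quantitative inputs. The second assertion follows from the first. Given $\delta\in(0,\frac12\log(rs)]$, density provides cores with $\eta$ in any subinterval $(\delta-1/n,\delta)$. Choosing inductively $\eta(\widehat C_{n+1})\in(\max\{\eta(\widehat C_n),\delta-1/n\},\delta)$ then gives \eqref{eq:ported-approximation}. The main task is therefore density in $(0,\frac12\log(rs))$.

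\emph{Step 1: a starting core and decreasing chains.} Start from the complete bipartite graph $G=K_{s+1,r+1}$. It is $(r+1,s+1)$-biregular, so its universal cover is $\T_{r+1,s+1}$ and $\eta(G)=\frac12\log(rs)$. Fix a small target gap $\varepsilon>0$ and choose $D$ with $\frac12\log(rs)/(D+2)<\varepsilon$, leaving room for the parity doubling described in Step 2. \cref{lem:separated-lift-coloring} gives a finite bipartite cover $H\to G$ with a coloring $c_1,\dots,c_k$ of $E(H)$ in which every lifted color class is $D$-separated in $\widetilde H=\T_{r+1,s+1}$. We then subdivide color classes one at a time, producing a chain $H=H^{(0)},H^{(1)},\dots,H^{(k)}$ in which $H^{(j)}$ subdivides all edges of colors $c_1,\dots,c_j$. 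The universal cover of $H^{(j)}$ is obtained from that of $H^{(j-1)}$ by subdividing the lifted class of $c_j$, which remains $D$-separated, since subdivision only lengthens line-graph distances. \cref{lem:separated-subdivision} then bounds each step's drop in $\eta$ by $\frac12\log(rs)/(D+2)<\varepsilon$, and \cref{lem:weighted-subdivision-pressure} shows each step is a strict decrease. Repeating the whole procedure, subdividing further until $\eta$ falls below any prescribed small positive value, yields a finite chain whose values are $\varepsilon$-dense from $\frac12\log(rs)$ down to near $0$. Each value is positive as long as the graph is not a cycle, and iterated subdivision of a non-cyclic core never makes it a cycle.

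\emph{Step 2: typing and parity.} Single subdivision destroys bipartiteness with respect to the inherited types. We will therefore subdivide each chosen edge by inserting two new vertices, i.e.\ replacing an edge by a path of length $3$. This preserves bipartiteness and the types of the original vertices, and the new vertices have degree two, so the degree bounds hold. \cref{lem:separated-subdivision} is stated for single subdivision. Applying it twice, or re-running its counting proof with $L+1$ replaced by $L+3$ in the denominator, bounds the drop by $2\frac12\log(rs)/(D+2)$; choosing $D$ twice as large absorbs this.

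\emph{Step 3: ports.} Once any edge of $H$ has been replaced by a path of length $3$, its two interior vertices have degree two and opposite types, so they serve as $p_0$ and $p_1$. For the top end of the spectrum we also need cores with $\eta$ close to $\frac12\log(rs)$ that already have ports. These are obtained by making a single length-$3$ subdivision in a cover $H$ of large girth, whose effect on $\eta$ is small by the same estimate. This also ensures that no value in the chain equals $\frac12\log(rs)$, consistent with the open interval.

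The main obstacle I expect is the transfer step in Step 1: one must verify that the entropy of a finite core equals the volume entropy of its universal cover, which is a tree of bounded degree. This is supplied by \cref{prop:finite-core-growth}. One must also check that separation of a later color class survives earlier subdivisions. Both points look routine, but I would check the second one carefully.
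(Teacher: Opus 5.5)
Your proposal is correct and follows essentially the paper's route: Tim\'ar-style subdivision of separated lifted color classes in a finite cover of $K_{s+1,r+1}$, length-three subdivisions to preserve the bipartite types, per-step control from \cref{lem:separated-subdivision} with strict decrease from \cref{lem:weighted-subdivision-pressure}, and ports taken from the new degree-two vertices. The one real difference is how you reach small entropies. The paper first replaces each edge of $K_{s+1,r+1}$ by a path of odd length $M$ and uses that the intervals $[h_0/(3M),h_0/M]$, with $h_0=\frac12\log(rs)$, overlap and cover $(0,h_0]$; you instead iterate tripling rounds, which works provided each new round re-applies \cref{lem:separated-lift-coloring} to the current graph (or refines each color by sub-edge position), and your separate top-end construction in Step 3 is redundant since $H^{(1)}$ already has ports and entropy within $\varepsilon$ of $h_0$.
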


\begin{proof}
Put \(h_0=\frac12\log(rs)\).  We make
the separated-subdivision argument of Tim\'ar \cite{Timar2026} quantitative
through \cref{lem:separated-subdivision} and retain the two vertex types.
Begin with the complete bipartite graph \(K_{s+1,r+1}\), whose universal
cover is \(\T_{r+1,s+1}\), and replace every edge by a path of a positive odd
length \(M\).  Call the resulting finite bipartite graph \(G_M\).  Its
universal-cover entropy is \(h_0/M\), and the original endpoint types extend
uniquely because \(M\) is odd.

Fix \(L\geq1\).  Apply \cref{lem:separated-lift-coloring} to \(G_M\) with
\(D=L+2\).  We obtain a connected finite bipartite cover
\(H_M\to G_M\) and finitely many color classes whose full lifts to the
universal cover are \((L+2)\)-separated.  Passing to a finite cover does not
change the universal cover, so its initial entropy is still \(h_0/M\).

Process the finitely many color classes one at a time.  For every edge in the
active class, first subdivide it once and then subdivide one of the two new
edges once more.  Previous subdivisions only increase distances, and passing
from an active edge to one of its two descendants changes line-graph distance
by at most two.  Thus the lifted active sets are \(L\)-separated at both
single-subdivision steps.  Apply
\cref{lem:separated-subdivision} with \(\Lambda=e^{h_0}\).  Subdivision
never increases entropy, so the same choice of \(\Lambda\) remains valid at
every later stage.  Consequently, each single step changes entropy by at most
\(h_0/(L+2)\).

We record only the stages at which both subdivisions of the active color
class are complete.  At every such stage, each processed edge has
been replaced by a path of length three.  The graph is again bipartite with
the original types, the degrees of old vertices are unchanged, and each
processed edge has created one degree-two vertex of each type.  After every
color has been processed, every unit edge of \(H_M\) has length three, so the
terminal universal-cover entropy is \(h_0/(3M)\).

Every active color class is nonempty.  Therefore each of its two subdivision
steps strictly decreases entropy by
\cref{lem:weighted-subdivision-pressure}; the graph remains a connected
non-cyclic core throughout.  In particular, the completed-stage entropies are
strictly decreasing and strictly positive.

As \(M\) ranges over the positive odd integers, the intervals
\begin{equation}\label{eq:typed-density-intervals}
  \left[\frac{h_0}{3M},\frac{h_0}{M}\right]
\end{equation}
cover \((0,h_0]\): the interval for \(M+2\) overlaps that for \(M\), since
\(1/(M+2)\geq1/(3M)\).  Given \(t\in(0,h_0]\) and \(\varepsilon>0\), choose
such an \(M\), and then choose \(L\) so large that
\(2h_0/(L+2)<\varepsilon\).  Write the initial and completed-stage entropies as
\[
  h^{(0)}=\frac{h_0}{M}>h^{(1)}>\cdots>h^{(N)}
  =\frac{h_0}{3M}.
\]
Two successive terms differ by less than \(\varepsilon\).  If \(t<h^{(0)}\),
let \(k\geq1\) be the least index for which \(h^{(k)}\leq t\).  Then
\(0\leq t-h^{(k)}<\varepsilon\).  If \(t=h^{(0)}\), choose \(h^{(1)}\)
instead; strict entropy decrease and the same successive-stage estimate give
\(0<t-h^{(1)}<\varepsilon\).  Thus in every case the selected completed stage
has entropy at most \(t\) and within \(\varepsilon\) of it; in the endpoint
case \(t=h^{(0)}\), it is strictly below \(t\).  Its graph
contains degree-two vertices of both types.  Mark
one of each as \(p_0,p_1\).  The
graph is finite, connected, non-cyclic, and has minimum degree two; its old
vertices retain the two required degree bounds.  This proves density.

Finally, density lets us choose recursively an exponent strictly between the
previous one and \(\delta\), and within \(1/n\) of \(\delta\).  This gives
\eqref{eq:ported-approximation}.  When \(\delta=h_0\), first choose
targets \(t_n\uparrow h_0\) and then take the density approximants inside
\((t_n,h_0)\); this makes the strict inequality below the endpoint
explicit.
\end{proof}

\subsection{Infinite cores and the unrestricted free spectrum}

Fix an arbitrary bi-infinite binary word
\(u=(u_j)_{j\in\ZZ}\in\{0,1\}^{\ZZ}\).  Put
\(I=\ZZ\setminus\{0\}\), and fix the bijection
\begin{equation}\label{eq:sparse-index-enumeration}
  \nu(j)=
  \begin{cases}
    2j,&j>0,\\
    -2j-1,&j<0,
  \end{cases}
  \qquad I\longrightarrow\{1,2,3,\ldots\}.
\end{equation}
Reindex the sequence in \eqref{eq:ported-approximation} by declaring
\begin{equation}\label{eq:sparse-core-reindexing}
  (C_j,p_{j,0},p_{j,1})
  =(\widehat C_{\nu(j)},\widehat p_{\nu(j),0},
    \widehat p_{\nu(j),1}),
  \qquad j\in I.
\end{equation}
Thus \(\eta(C_j)\to\delta\) as \(\nu(j)\to\infty\), and the rates increase
strictly in the order prescribed by \(\nu\).

We first describe an expanded spine.  Begin with symbolic vertices
\((x_j)_{j\in\ZZ}\), and
replace the symbolic edge \(x_jx_{j+1}\) by a corridor of length \(L_j\),
where
\begin{equation}\label{eq:sparse-corridor-parity}
  L_j\equiv u_j\pmod 2.
\end{equation}
Give \(x_0\) type zero and propagate types along the corridors.  The
positive integers \(L_j\) will be chosen below and can be arbitrarily large
subject to the parity condition.  Write \(\tau_j\in\{0,1\}\) for the type
of \(x_j\).  At every \(x_j\), \(j\neq0\), attach a double-ported typed core
\((C_j,p_{j,0},p_{j,1})\) by the bridge
\(x_jp_{j,1-\tau_j}\).  Thus the bridge joins opposite types; the other
port remains available for degree completion.  For brevity set
\(p_j=p_{j,1-\tau_j}\), and call the resulting bipartite graph
\(Y_{\delta,u}^{r,s}\).

\FloatBarrier
\begin{figure}[H]
\centering
\begin{tikzpicture}[
  x=.98cm,y=1cm,
  spine/.style={line width=1.05pt,blue!68!black},
  corridor/.style={spine,densely dotted},
  bridge/.style={line width=.8pt,gray!70!black},
  block/.style={draw=green!45!black,fill=green!7,rounded corners=2pt,
    minimum width=1.12cm,minimum height=.72cm,line width=.8pt},
  vert/.style={circle,draw=blue!70!black,fill=white,line width=.8pt,
    minimum size=5pt,inner sep=0pt},
  center/.style={vert,fill=orange!25,draw=orange!80!black,
    minimum size=6.3pt}
]
  \node[vert]   (xm2) at (-5.6,0) {};
  \node[vert]   (xm1) at (-2.8,0) {};
  \node[center] (x0)  at (0,0) {};
  \node[vert]   (x1)  at (2.8,0) {};
  \node[vert]   (x2)  at (5.6,0) {};
  \draw[corridor] (xm2)--node[above=2pt,font=\scriptsize]
    {$L_{-2}\equiv u_{-2}\pmod 2$} (xm1);
  \draw[corridor] (xm1)--node[above=2pt,font=\scriptsize]
    {$L_{-1}\equiv u_{-1}\pmod 2$} (x0);
  \draw[corridor] (x0)--node[above=2pt,font=\scriptsize]
    {$L_{0}\equiv u_{0}\pmod 2$} (x1);
  \draw[corridor] (x1)--node[above=2pt,font=\scriptsize]
    {$L_{1}\equiv u_{1}\pmod 2$} (x2);
  \node[block] (Cm2) at (-5.6,-1.25) {$C_{-2}$};
  \node[block] (Cm1) at (-2.8,-1.25) {$C_{-1}$};
  \node[block] (C1)  at ( 2.8,-1.25) {$C_{1}$};
  \node[block] (C2)  at ( 5.6,-1.25) {$C_{2}$};
  \draw[bridge] (xm2)--(Cm2);
  \draw[bridge] (xm1)--(Cm1);
  \draw[bridge] (x1)--(C1);
  \draw[bridge] (x2)--(C2);
  \node[font=\scriptsize,fill=white,inner sep=1.4pt] at (0,-.42) {$x_0$};
  \node[font=\scriptsize,text=green!35!black,anchor=west]
    at (6.3,-1.25) {finite non-cyclic blocks};
  \draw[spine,densely dotted] (-5.5,0)--(xm2);
  \draw[spine,densely dotted] (x2)--(5.5,0);
\end{tikzpicture}
\caption{The sparse typed block quotient before biregular completion.
The orange vertex marks the typed basepoint.  Each finite typed core is joined
at the port opposite to the corresponding spine vertex.  Corridor parities
may be prescribed independently, while their lengths control mixed normal
forms.}
\label{fig:sparse-sparse-blocks}
\end{figure}

We next choose the corridor lengths.  The reindexing
\eqref{eq:sparse-core-reindexing} fixes which core lies at each marked
vertex.
For \(G_j=\pi_1(C_j,p_j)\), let \(\ell_j(g)\) be the length of the reduced
loop representing \(g\) at \(p_j\), and define
\begin{equation}\label{eq:block-poincare-series}
  \Phi_j(z)=\sum_{1\neq g\in G_j}z^{\ell_j(g)}.
\end{equation}
Its radius of convergence is \(e^{-\eta(C_j)}\).  Hence, with
\(z_0=e^{-\delta}\), every number \(\Phi_j(z_0)\) is finite.

Write, with distances measured in \(Y_{\delta,u}^{r,s}\),
\[
  R_j=d_{Y_{\delta,u}^{r,s}}(x_0,p_j),
  \qquad
  d_{ij}=d_{Y_{\delta,u}^{r,s}}(p_i,p_j).
\]
The first estimate below controls the initial and terminal travel between the
basepoint and a block; the second controls every transition between two
distinct block factors in a free-product normal form.
Since \(0<z_0<1\), choose the corridor lengths inductively by placing
the marked vertices successively in the order
\(x_{-1},x_1,x_{-2},x_2,\ldots\), which is exactly the order determined by
\(\nu\).  When a new vertex \(x_j\) is placed, it is joined to the already
constructed inner part of the spine by one new corridor.  Increasing the
length of that corridor increases \(R_j\) and every distance \(d_{ij}\) to a
previously placed vertex \(x_i\), while leaving all distances between
previously placed vertices unchanged.  Hence its length can be chosen, with
the prescribed parity, so that the new initial condition and both directed
transition conditions involving \(j\) hold simultaneously.  Previously
imposed inequalities remain valid.  Continuing this induction on the two
sides of \(x_0\) gives \eqref{eq:sparse-corridor-parity} and
\begin{align}
  \sum_{j\in I}z_0^{R_j}\Phi_j(z_0)&<\infty,
    \label{eq:sparse-initial-sum}\\
  z_0^{d_{ij}}\Phi_j(z_0)
    &\leq 2^{-\nu(i)-\nu(j)-4}
      \qquad(i\neq j).
    \label{eq:sparse-transfer-smallness}
\end{align}
More explicitly, at the stage when \(x_j\) is placed, require
\(z_0^{R_j}\Phi_j(z_0)\leq2^{-\nu(j)}\).  For every previously placed
\(i\), impose \eqref{eq:sparse-transfer-smallness} both for the ordered pair
\((i,j)\), whose factor is \(\Phi_j(z_0)\), and for \((j,i)\), whose factor
is \(\Phi_i(z_0)\).  These constitute only finitely many lower bounds on the
new corridor length, and an integer of the prescribed parity exists beyond
their maximum.  Summing the initial inequalities gives
\eqref{eq:sparse-initial-sum}, and every ordered pair is handled at a finite
stage, proving \eqref{eq:sparse-transfer-smallness}.

Complete every deficient type-zero, respectively type-one, vertex of
\(Y_{\delta,u}^{r,s}\) by attaching rooted biregular half-trees until its
degree is \(r+1\), respectively \(s+1\); call the result
\(Q_{\delta,u}^{r,s}\).  Before completion, each noncentral marked spine
vertex and each used port has degree three, while every other new corridor
or port vertex has degree two.  Since \(r,s\geq2\), all degrees can be
completed.  The completion adds no reduced closed path.  Thus
\begin{equation}\label{eq:sparse-free-product}
  \Gamma_{\delta,u}^{r,s}:=\pi_1(Q_{\delta,u}^{r,s})
  \cong \mathop{\ast}_{j\in I}G_j,
\end{equation}
and the universal cover of \(Q_{\delta,u}^{r,s}\) is
\(\T_{r+1,s+1}\).

\begin{theorem}[Sparse typed gluing]
\label{thm:sparse-full}
For the graph constructed above,
\[
  \delta_{\Gamma_{\delta,u}^{r,s}}=\delta.
\]
Consequently, sparse block quotients realize every exponent in
\((0,\frac12\log(rs)]\); adjoining the trivial action at exponent zero
gives the full interval \([0,\frac12\log(rs)]\).
\end{theorem}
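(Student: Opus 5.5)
The plan is to prove the two inequalities \(\delta_\Gamma\geq\delta\) and \(\delta_\Gamma\leq\delta\) separately, with \(\Gamma=\Gamma_{\delta,u}^{r,s}\), using the orbit--loop correspondence (\cref{prop:orbit-loop}) at the basepoint \(x_0\). By that correspondence, \(\delta_\Gamma\) is the exponent of convergence of the based reduced-loop series
\[
  \mathcal P(z)=\sum_{\gamma\in\Gamma}z^{d(\widetilde x_0,\gamma\widetilde x_0)}.
\]
It therefore suffices to show that \(\mathcal P(z)\) diverges for \(z>z_0\) and converges for \(z<z_0\).

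\emph{Lower bound.} Each \(G_j\) embeds in \(\Gamma\) via the conjugate \(c_jG_jc_j^{-1}\), where \(c_j\) is the path from \(x_0\) to \(p_j\) of length \(R_j\). Reduced loops at \(p_j\) inside \(C_j\) extend to reduced loops at \(x_0\) of length \(\ell_j(g)+2R_j\). Hence \(\delta_\Gamma\geq\eta(C_j)\) for every \(j\). Since \(\eta(C_j)\nearrow\delta\) by \eqref{eq:ported-approximation}, this gives \(\delta_\Gamma\geq\delta\).

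\emph{Upper bound.} Fix \(z<z_0\); it will suffice to show \(\mathcal P(z)<\infty\), and by monotonicity it is enough to bound \(\mathcal P(z_0)\) itself. Every nontrivial element has a unique free-product normal form \(g_1g_2\cdots g_k\) with \(g_i\in G_{j_i}\setminus\{1\}\) and \(j_i\neq j_{i+1}\), by \eqref{eq:sparse-free-product}. Because the quotient is a tree of spine, corridors, bridges and blocks, the reduced based loop is the concatenation
\[
  c_{j_1}\cdot(\text{loop }g_1)\cdot[p_{j_1}\to p_{j_2}]\cdots(\text{loop }g_k)\cdot\overline{c_{j_k}},
\]
with the connecting pieces geodesics in \(Y_{\delta,u}^{r,s}\) of lengths \(R_{j_1}\), \(d_{j_ij_{i+1}}\), \(R_{j_k}\). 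The step I expect to require the most care is checking that no cancellation occurs at the junctions. Each connecting geodesic enters and leaves a block only through its bridge, while a nontrivial block loop at \(p_j\) starts and ends inside \(C_j\) with edges different from the bridge; so the concatenation is reduced and the length is exactly additive.

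Summing over normal forms then gives
\[
  \mathcal P(z_0)\leq 1+\sum_{k\geq1}\ \sum_{j_1,\dots,j_k}z_0^{R_{j_1}}\Phi_{j_1}(z_0)\prod_{i=1}^{k-1}\bigl(z_0^{d_{j_ij_{i+1}}}\Phi_{j_{i+1}}(z_0)\bigr)\,z_0^{R_{j_k}}.
\]
Using \(z_0^{R_{j_k}}\leq1\), \eqref{eq:sparse-initial-sum}, and \eqref{eq:sparse-transfer-smallness}, the inner sum is bounded by \(S\cdot\bigl(\sum_{i\neq j}2^{-\nu(i)-\nu(j)-4}\bigr)\)-type quantities. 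More precisely, define the transfer kernel \(T_{ij}=z_0^{d_{ij}}\Phi_j(z_0)\). Its row sums satisfy
\[
  \sum_j T_{ij}\leq 2^{-\nu(i)-4}\sum_j 2^{-\nu(j)}\leq 2^{-4},
\]
so \(T\) has \(\ell^\infty\) operator norm at most \(1/16\). The \(k\)-fold sums are therefore bounded by \(S\,16^{-(k-1)}\) with \(S=\sum_j z_0^{R_j}\Phi_j(z_0)<\infty\), a geometric series. Hence \(\mathcal P(z_0)<\infty\), and the same bound applies to every \(z<z_0\). This gives \(\delta_\Gamma\leq\delta\).

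Combining the two bounds yields \(\delta_\Gamma=\delta\). The action is free and type-preserving on \(\T_{r+1,s+1}\) by construction, since the universal cover of \(Q_{\delta,u}^{r,s}\) is \(\T_{r+1,s+1}\). Finally, the trivial group supplies \(\delta=0\), which gives the full interval \([0,\frac12\log(rs)]\).
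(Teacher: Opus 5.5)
Your proposal is correct and follows the paper's proof essentially step for step. The lower bound comes from the conjugated block subgroup $G_j$, whose series $\Phi_j$ diverges past $e^{-\eta(C_j)}$ with only the extra displacement $2R_j$; the upper bound comes from free-product normal forms with additive reduced-loop lengths and the transfer kernel $z_0^{d_{ij}}\Phi_j(z_0)$, whose small $\ell^\infty$ operator norm (from \eqref{eq:sparse-transfer-smallness}) paired with the $\ell^1$ initial vector (from \eqref{eq:sparse-initial-sum}) gives a convergent geometric series at $z_0$. The only differences are cosmetic: you obtain the constant $1/16$ instead of $1/8$, and you spell out the junction non-cancellation check that the paper compresses into ``distinct blocks are joined by a tree of bridges.''
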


\begin{proof}
A nontrivial element of the free product
\eqref{eq:sparse-free-product} has a unique normal form
\(g_1\cdots g_m\), where
\(1\neq g_k\in G_{i_k}\) and \(i_k\neq i_{k+1}\).  Because distinct blocks
are joined by a tree of bridges, the length of the corresponding reduced
closed path at \(x_0\) is
\begin{equation}\label{eq:sparse-normal-form-length}
  R_{i_1}+\ell_{i_1}(g_1)
  +\sum_{k=1}^{m-1}
    \bigl(d_{i_k i_{k+1}}+\ell_{i_{k+1}}(g_{k+1})\bigr)
  +R_{i_m}.
\end{equation}
Choose a lift \(\widetilde x_0\) of the quotient base vertex \(x_0\) in the
universal cover.  Under the orbit--loop correspondence,
\eqref{eq:sparse-normal-form-length} is also the displacement of the
corresponding deck transformation at \(\widetilde x_0\).
For \(0<z\leq z_0\), introduce the countable nonnegative matrix
\begin{equation}\label{eq:sparse-transfer-kernel}
  \mathsf K_{\mathrm{tr}}(z)_{ij}
  =\begin{cases}
     z^{d_{ij}}\Phi_j(z),&i\neq j,\\
     0,&i=j.
   \end{cases}
\end{equation}
Thus \(\mathsf K_{\mathrm{tr}}(z)_{ij}\) is the total weight of traveling
from the marked point of
block \(i\) to that of block \(j\) and then choosing a nontrivial syllable in
\(G_j\).
The sum in \eqref{eq:sparse-transfer-smallness} gives
\begin{equation}\label{eq:sparse-kernel-bound}
  \lVert \mathsf K_{\mathrm{tr}}(z_0)\rVert_{\ell^\infty\to\ell^\infty}
  =\sup_i\sum_j\mathsf K_{\mathrm{tr}}(z_0)_{ij}<\frac18.
\end{equation}
Let
\[
  \mathbf w_{\mathrm{in}}(z)
    =\bigl(z^{R_i}\Phi_i(z)\bigr)_{i\in I},
  \qquad
  \mathbf w_{\mathrm{out}}(z)
    =\bigl(z^{R_i}\bigr)_{i\in I}.
\]
Denote by \(\mathcal P_{\Gamma_{\delta,u}^{r,s}}(z)\) the ordinary
displacement growth series with weight
\(z^{d(\widetilde x_0,\gamma\widetilde x_0)}\).  Summing
\eqref{eq:sparse-normal-form-length} over normal forms of all syllable
lengths gives
\begin{equation}\label{eq:sparse-global-poincare}
  \mathcal P_{\Gamma_{\delta,u}^{r,s}}(z)
  =1+\sum_{m\geq1}
    \mathbf w_{\mathrm{in}}(z)^{\mathsf T}
    \mathsf K_{\mathrm{tr}}(z)^{m-1}\mathbf w_{\mathrm{out}}(z).
\end{equation}
Since all coefficients are nonnegative, the rearrangement of the sums over syllable lengths, block indices, and normal-form length is justified by Tonelli’s theorem.
At \(z=z_0\), the vector \(\mathbf w_{\mathrm{in}}(z_0)\) is in
\(\ell^1(I)\) by \eqref{eq:sparse-initial-sum}, while
\(\mathbf w_{\mathrm{out}}(z_0)\) is bounded by one.
For every \(m\geq1\), the \(\ell^1\)--\(\ell^\infty\) pairing gives
\begin{align*}
  \left|
  \mathbf w_{\mathrm{in}}(z_0)^{\mathsf T}
  \mathsf K_{\mathrm{tr}}(z_0)^{m-1}
  \mathbf w_{\mathrm{out}}(z_0)
  \right|
  &\leq
  \|\mathbf w_{\mathrm{in}}(z_0)\|_{\ell^1}
  \|\mathsf K_{\mathrm{tr}}(z_0)\|_{\ell^\infty\to\ell^\infty}^{m-1}
  \|\mathbf w_{\mathrm{out}}(z_0)\|_{\ell^\infty}.
\end{align*}
The Neumann series in \eqref{eq:sparse-global-poincare} therefore converges
by \eqref{eq:sparse-kernel-bound}.  It continues to converge for
\(0<z<z_0\) by coefficientwise monotonicity.  Hence
\(\delta_{\Gamma_{\delta,u}^{r,s}}\leq\delta\).

If \(0\leq t<\delta\), choose \(j\) with \(\eta(C_j)>t\).  Then
\(e^{-t}>e^{-\eta(C_j)}\), so \(\Phi_j(e^{-t})\) diverges.  Its contribution
to the series based at \(x_0\) is
\[
  e^{-2tR_j}\Phi_j(e^{-t})=+\infty.
\]
Hence the Poincar\'e series of the subgroup
\(G_j<\Gamma_{\delta,u}^{r,s}\), and therefore that of the full group,
diverges at \(t\).  Consequently
\(\delta_{\Gamma_{\delta,u}^{r,s}}\geq\delta\).  This proves equality.
The action is free, and hence discrete, because it is the deck action of the
ordinary graph covering
\(\T_{r+1,s+1}\to Q_{\delta,u}^{r,s}\).  Its quotient is bipartite, so the
action is type-preserving.  For \(\delta=0\), use the trivial group.
\end{proof}

\begin{proof}[Proof of \cref{thm:intro-stratification}\textup{(i)}]
Apply \cref{lem:ported-density,thm:sparse-full}.  The reindexing
\eqref{eq:sparse-core-reindexing} makes the block rates increase strictly to
\(\delta\) in both directions along the spine.  Every block is non-cyclic by
the definition of a double-ported typed core.
\end{proof}

\begin{remark}[Parity freedom in the marked construction]
The corridor parity condition \eqref{eq:sparse-corridor-parity} may be
prescribed by any bi-infinite binary word \(u\).  When the quotient is
regarded together with its based, oriented spine, its corridor parities
recover \(u\).  Retaining only the oriented spine identifies words up to
shift, and forgetting the orientation also allows reflection.  We make no
claim that the spine is intrinsically unique in the unmarked quotient; this
parity freedom is a refinement of the construction, not an additional
classification result.
\end{remark}

\subsection{Finite generation and the finite-state spectrum}
\label{sec:fg-spectrum}

We now prove the typed description \eqref{eq:Ars-intro}; the regular
statement will be derived afterward.

\begin{theorem}[Finite-core realization]\label{thm:finite-core-realization}
Let \(C=C_0\sqcup C_1\) be a finite connected bipartite core satisfying
\[
  \deg_C(v)\leq r+1\quad(v\in C_0),
  \qquad
  \deg_C(v)\leq s+1\quad(v\in C_1).
\]
There is a connected bipartite \((r+1,s+1)\)-biregular graph \(Q_C\),
obtained from \(C\) by attaching trees, such that
\[
  \Core(Q_C)=C,
  \qquad
  \widetilde{Q_C}\cong\T_{r+1,s+1}.
\]
Its deck group \(\Gamma_C=\pi_1(Q_C)\) is finitely generated and free, and
\[
  \delta_{\Gamma_C}=\log\rho(B_C).
\]
\end{theorem}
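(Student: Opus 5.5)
The plan is to build \(Q_C\) by degree completion with rooted half-trees, exactly as in the construction of \(Q_{\delta,u}^{r,s}\) before \cref{thm:sparse-full}, and then read off the invariants from \cref{lem:finite-core,prop:finite-core-growth}. For each \(\varepsilon\in\{0,1\}\) let \(T_\varepsilon\) be the rooted biregular half-tree whose root has type \(1-\varepsilon\) and one missing edge. Concretely, \(T_\varepsilon\) is a branch of \(\T_{r+1,s+1}\) cut at one edge: the root has one fewer than its full degree, and every other vertex has full degree for its type. For each \(v\in C_\varepsilon\) with \(\deg_C(v)<r+1\) (if \(\varepsilon=0\)) or \(<s+1\) (if \(\varepsilon=1\)), attach \(r+1-\deg_C(v)\), respectively \(s+1-\deg_C(v)\), copies of \(T_\varepsilon\) by edges from \(v\) to their roots. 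Since the attaching edge joins opposite types and every half-tree vertex already has full degree, the result \(Q_C\) is connected, bipartite, and \((r+1,s+1)\)-biregular. By the observation in \S2.2, its universal cover is therefore \(\T_{r+1,s+1}\).

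Next I check that \(\Core(Q_C)=C\). Every attached tree meets the rest of \(Q_C\) in a single vertex. Hence, by the remark after the definition of cores, no cyclically reduced closed path enters it, so \(\Core(Q_C)\subseteq C\). Conversely, \(C\) is a finite connected graph of minimum degree at least two (it is a core). So every edge of \(C\) lies on a cyclically reduced closed path in \(C\), and hence in \(Q_C\). To see this, extend the edge in both directions non-backtrackingly, which minimum degree two allows. By finiteness the path must eventually close up, and the usual core argument then gives a cyclically reduced circuit through the edge. This is the only point needing care: if \(C\) were, say, two cycles joined by a bridge, the bridge edge is still covered by the dumbbell circuit. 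Thus \(\Core(Q_C)=C\).

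The deck group \(\Gamma_C=\pi_1(Q_C)\) acts freely on the tree, so it is free, and it is finitely generated by \cref{lem:finite-core} since its core is finite. The action is type-preserving because \(Q_C\) is bipartite, and it is discrete because it is free. Finally, \cref{prop:finite-core-growth} applied with the finite quotient core \(C\) gives \(\delta_{\Gamma_C}=\log\rho(B_C)\). This includes the cycle case \(\rho=1\), and \cref{lem:basepoint} handles moving the basepoint into \(C\). I expect no genuine obstacle; the only delicate step is the verification that \(\Core(Q_C)=C\), that is, that no edge of \(C\) is lost.
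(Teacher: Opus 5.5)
Your proposal is correct and follows essentially the same route as the paper. In both, the degrees are completed by attaching rooted biregular half-trees whose roots have the opposite type, no cyclically reduced closed path can enter a tree attached at a single vertex, and \cref{prop:finite-core-growth} (with \cref{lem:finite-core} for finite generation) finishes the proof. The only difference is that you also check explicitly, via the lollipop/dumbbell circuit, that every edge of \(C\) survives in \(\Core(Q_C)\); the paper takes this as implicit in \(C\) being a core.
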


\begin{proof}
At each deficient vertex attach the required number of rooted biregular
half-trees, with the opposite type at the first new vertex.  A reduced closed
path cannot enter an attached tree: at a vertex of maximal distance from the
attachment point it would have to reverse its last edge.  Hence the core and
fundamental group remain unchanged.  The universal cover has the prescribed
typed degrees and is therefore \(\T_{r+1,s+1}\).  Finally apply
\cref{prop:finite-core-growth}.
\end{proof}

\begin{proof}[Proof of \cref{thm:intro-stratification}\textup{(ii)--(iii)}]
Let \(\Gamma\) be a finitely generated free group acting freely on
\(\T_{r+1,s+1}\) type-preservingly.  If \(\Gamma\) is trivial, its
multiplicative growth rate \(\lambda_\Gamma=e^{\delta_\Gamma}\) is \(1\).  The same spectral value is also
realized, by a different action, by the infinite cyclic deck group of an even
cycle.  If \(\Gamma\) is nontrivial, the quotient is bipartite and has, by
\cref{lem:finite-core}, a
finite core \(C=\Core(\Gamma\backslash\T_{r+1,s+1})=C_0\sqcup C_1\).  Its type-zero and type-one degrees
lie in \([2,r+1]\) and \([2,s+1]\), respectively, and
\(\exp(\delta_\Gamma)=\rho(B_C)\).  Conversely, every core with these
properties is realized by \cref{thm:finite-core-realization}.  This proves
\eqref{eq:Ars-intro}.

There are only countably many finite graphs, so \(\cA_{r,s}\) is countable,
and \eqref{eq:biregular-rho-bound} gives
\(\cA_{r,s}\subseteq[1,\sqrt{rs}]\).  The density assertion follows from
\cref{lem:ported-density}, since every double-ported typed core is one of the
cores allowed in \eqref{eq:Ars-intro}; the value \(1\) is supplied by an
even cycle.
\end{proof}

\begin{corollary}[Regular finite-core spectrum]
\label{cor:regular-finite-core-spectrum}
For \(q\geq2\), let \(\cA_q\) be the exponential critical-exponent spectrum
of finitely generated free groups acting freely on \(\T_{q+1}\), without a
type-preserving hypothesis.  Then
\[
  \cA_q=\cA_{q,q}
  =\{\rho(B_C):C\text{ is a finite connected core and }
      2\leq\deg_C(v)\leq q+1\}.
\]
In particular, \(\cA_q\) is countable and dense in \([1,q]\).
\end{corollary}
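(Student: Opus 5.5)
The corollary has three pieces: the equality \(\cA_q=\cA_{q,q}\), the description of \(\cA_q\) by arbitrary (not necessarily bipartite) finite cores with \(2\leq\deg_C(v)\leq q+1\), and countability plus density. I will prove the two inclusions between \(\cA_q\) and the displayed set \(\mathcal B_q\) of Hashimoto radii, and then compare \(\mathcal B_q\) with \(\cA_{q,q}\) as given by \eqref{eq:Ars-intro} with \(r=s=q\).

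\emph{Step 1: \(\cA_q\subseteq\mathcal B_q\).} Let \(\Gamma\) be a finitely generated free group acting freely on \(\T_{q+1}\).
\begin{itemize}
  \item Torsion-freeness excludes inversions, as noted after \eqref{eq:ambient-bound}, so \(\T_{q+1}\to\Gamma\backslash\T_{q+1}\) is a covering.
  \item The quotient is \((q+1)\)-regular but possibly non-bipartite, and may contain loops and multi-edges.
  \item If \(\Gamma\) is trivial, its value \(1\) is realized by any cycle.
  \item Otherwise \cref{lem:finite-core} gives a finite core \(C\) with degrees in \([2,q+1]\).
  \item The orbit--loop correspondence (\cref{prop:orbit-loop}) and \cref{prop:finite-core-growth} do not use bipartiteness, so \(e^{\delta_\Gamma}=\rho(B_C)\).
\end{itemize}

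\emph{Step 2: \(\mathcal B_q\subseteq\cA_q\).} Given such a \(C\), attach rooted \(q\)-ary trees at deficient vertices, exactly as in \cref{thm:finite-core-realization} but ignoring types. This produces a \((q+1)\)-regular graph whose universal cover is \(\T_{q+1}\) and whose core is \(C\). Its deck group is finitely generated and free with critical exponent \(\log\rho(B_C)\).

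\emph{Step 3: \(\cA_{q,q}=\mathcal B_q\).} The inclusion \(\cA_{q,q}\subseteq\mathcal B_q\) is immediate from \eqref{eq:Ars-intro}. For the converse, take an arbitrary finite core \(C\) and form the canonical bipartite double cover \(C^{(2)}=C\times K_2\).
\begin{itemize}
  \item The vertices are \((v,\varepsilon)\), and each oriented edge \(e\) of \(C\) lifts to the edges from \((o(e),\varepsilon)\) to \((t(e),1-\varepsilon)\). A loop lifts to an edge between the two copies of its vertex, so degrees are preserved.
  \item \(C^{(2)}\) is bipartite and is a core.
  \item If \(C\) is non-bipartite, \(C^{(2)}\) is connected. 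If \(C\) is bipartite, take a component of \(C^{(2)}\), which is isomorphic to \(C\); then \(\rho\) is unchanged.
  \item The non-bipartite case is the main point. The covering \(C^{(2)}\to C\) induces a covering of non-backtracking edge shifts, so \(B_{C^{(2)}}\) is a 2-lift of \(B_C\).
  \item The lifted Perron vector of \(B_C\), extended constant on fibres, is a positive eigenvector of \(B_{C^{(2)}}\). Hence \(\rho(B_{C^{(2)}})=\rho(B_C)\) by Perron--Frobenius when \(C\) is non-cyclic.
  \item Equivalently, the universal covers agree, and \(\rho\) equals the growth rate there by \cref{prop:finite-core-growth}.
  \item Cycles give the value \(1\) in both settings.
\end{itemize}
Since \(C^{(2)}\) has the same degrees, it satisfies the bounds in \eqref{eq:Ars-intro} with \(r=s=q\). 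Therefore \(\rho(B_C)\in\cA_{q,q}\).

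\emph{Step 4: countability and density.} Countability and density of \(\cA_q=\cA_{q,q}\) in \([1,\sqrt{q\cdot q}]=[1,q]\) now follow from \cref{thm:intro-stratification}(iii).

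I expect the main, though modest, obstacle to be Step 3: the bookkeeping for loops and multi-edges in the double cover, and the spectral-radius equality via the lifted positive eigenvector.
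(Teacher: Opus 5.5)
Your proposal is correct and follows essentially the same route as the paper. It uses untyped tree completion for the finite-core description, the canonical bipartite double cover to get \(\cA_q=\cA_{q,q}\), and \cref{thm:intro-stratification}\textup{(iii)} with \(r=s=q\) for countability and density; the double cover has the same universal cover and hence the same Hashimoto radius, and your lifted-Perron-vector argument just makes that explicit at the matrix level. One small bookkeeping slip: a loop at \(v\) lifts to two parallel edges between \((v,0)\) and \((v,1)\), not to a single edge, and that is exactly why degrees are preserved, as your oriented-edge description already shows.
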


\begin{proof}
The untyped finite-core description follows from the same tree-completion
argument.  If such a core \(C\) is not bipartite, replace it by its canonical
connected bipartite double cover \(C^{\mathrm{bip}}\).  The two graphs have
the same universal cover, and the growth of reduced paths in that universal
cover gives
\(\rho(B_{C^{\mathrm{bip}}})=\rho(B_C)\).  Hence every untyped value has a
type-preserving realization.  The reverse inclusion is immediate, and the
remaining assertions follow from \cref{thm:intro-stratification} with
\(r=s=q\).
\end{proof}

\begin{proof}[Proof of \cref{cor:intro-regular}]
Set \(r=s=q\).  The ambient entropy bound gives
\(\delta_\Gamma\leq\log q\) for every discrete subgroup of
\(\Aut(\T_{q+1})\), while \cref{thm:intro-stratification}\textup{(i)}
realizes every value in \([0,\log q]\) by a free type-preserving action.
The equality \(\cA_{q,q}=\cA_q\) is
\cref{cor:regular-finite-core-spectrum}.
\end{proof}

\subsection{The Ihara--Bass determinant and algebraic restrictions}
\label{sec:ihara}

Let \(C\) be a finite connected core with \(n\) vertices and \(m\) unoriented
edges.  Let \(A_C\) be its adjacency matrix, let \(D_C\) be its diagonal
degree matrix, and let \(I_j\) denote the \(j\times j\) identity matrix.
For loops and multiple edges we use the half-edge convention: a
loop contributes two to the degree and two to the corresponding diagonal
entry of \(A_C\), and its two orientations are distinct states of \(B_C\).
Let \(v\) be a formal variable.  A closed non-backtracking path is
\emph{tailless} when its last oriented edge is not the reverse of its first,
and it is \emph{primitive} when it is not a proper power of a shorter closed
path.  Thus a tailless non-backtracking closed path is cyclically reduced in
the terminology of \cref{sec:dictionary}.  If \(\mathfrak p\) is primitive,
let \(\ell(\mathfrak p)\) be its length and let \([\mathfrak p]\) denote its
equivalence class under cyclic change of starting edge; the reversed path is
not identified with \(\mathfrak p\).  The Ihara zeta function is
\[
  Z_C(v)=\prod_{[\mathfrak p]}(1-v^{\ell(\mathfrak p)})^{-1},
\]
where the product ranges over all such equivalence classes.
Hashimoto's edge determinant and the Ihara--Bass formula give
\begin{align}
  Z_C(v)^{-1}
  &=\det(I_{2m}-vB_C),\label{eq:hashimoto-det}\\
  &=(1-v^2)^{m-n}
     \det\!\left(I_n-vA_C+v^2(D_C-I_n)\right).
     \label{eq:ihara-bass}
\end{align}
See \cite{Hashimoto1989,Bass1992,KotaniSunada2000}.  Equivalently,
\begin{equation}\label{eq:char-bass}
  \det(xI_{2m}-B_C)
  =(x^2-1)^{m-n}
    \det\!\left(x^2I_n-xA_C+(D_C-I_n)\right).
\end{equation}

Define the vertex form of the Ihara--Bass polynomial by
\begin{equation}\label{eq:PC}
  P_C(x)=\det\!\left(x^2I_n-xA_C+(D_C-I_n)\right)\in\ZZ[x].
\end{equation}
For \(\rho(B_C)>1\), the number \(\rho(B_C)\) is the largest positive zero
selected by the non-backtracking Perron root, and its minimal polynomial
divides \(P_C(x)\).

\begin{definition}
A real algebraic integer \(\lambda>1\) is a \emph{Perron number} if every
other Galois conjugate \(\lambda'\) satisfies
\(|\lambda'|<\lambda\).  It is a \emph{weak Perron number} if the strict
inequality is replaced by \(|\lambda'|\leq\lambda\).
An irreducible nonnegative matrix is \emph{primitive} if one of its positive
powers has all entries positive.  Its \emph{period} is the greatest common
divisor of the lengths of the directed cycles in the graph having an arrow
\(i\to j\) exactly when the \((i,j)\)-entry is positive.
\end{definition}

\begin{proposition}[Arithmetic restrictions]\label{prop:arithmetic}
Let \(C\) be a finite connected non-cyclic core, and put
\(\lambda=\rho(B_C)>1\).  Then:
\begin{enumerate}[label=(\roman*)]
  \item \(\lambda\) is a weak Perron algebraic integer;
  \item if \(B_C\) is primitive, then \(\lambda\) is a Perron number;
  \item the degree of the minimal polynomial of \(\lambda\) is at most
    \(2|V(C)|\);
  \item if \(m_\lambda(x)\) is the monic minimal polynomial, then
    \[
      |m_\lambda(0)|
      \mathrel{\big|}
      \prod_{v\in V(C)}(\deg_C(v)-1).
    \]
\end{enumerate}
\end{proposition}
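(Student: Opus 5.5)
The plan is to read everything off the integer matrix \(B_C\) and the vertex-form polynomial \(P_C\) of \eqref{eq:PC}, via the factorization \eqref{eq:char-bass}.

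\textbf{Part (i).} Since \(C\) is connected, non-cyclic and of minimum degree at least two, \(B_C\) is irreducible, nonnegative and integral, and \(\lambda=\rho(B_C)>1\) by \cref{lem:weighted-subdivision-pressure}. Its characteristic polynomial is monic in \(\ZZ[x]\), so \(\lambda\) is an algebraic integer. Every Galois conjugate \(\lambda'\) of \(\lambda\) is a root of the same characteristic polynomial, hence an eigenvalue of \(B_C\). Therefore \(|\lambda'|\leq\rho(B_C)=\lambda\), and \(\lambda\) is weak Perron.

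\textbf{Part (ii).} If \(B_C\) is primitive, Perron--Frobenius gives that \(\lambda\) is a simple eigenvalue and every other eigenvalue has modulus strictly less than \(\lambda\). A conjugate \(\lambda'\neq\lambda\) is a different root of the characteristic polynomial, hence a different eigenvalue. So \(|\lambda'|<\lambda\), and \(\lambda\) is Perron.

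\textbf{Part (iii).} By \eqref{eq:char-bass},
\[
\det(xI_{2m}-B_C)=(x^2-1)^{m-n}P_C(x).
\]
The factor \((x^2-1)^{m-n}\) vanishes only at \(\pm1\), and \(\lambda>1\). Hence \(P_C(\lambda)=0\), so \(m_\lambda\) divides \(P_C\). Since \(P_C\) is monic of degree \(2n\), we get \(\deg m_\lambda\leq 2|V(C)|\).

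I should check one sign issue: \(m-n=b_1-1\geq1\) for a non-cyclic core, so the factorization is polynomial. In general, though, the identity of rational functions already suffices.

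\textbf{Part (iv).} Gauss's lemma gives \(P_C=m_\lambda\cdot Q\) with \(Q\) monic in \(\ZZ[x]\). Evaluating at \(x=0\) gives
\[
m_\lambda(0)\,Q(0)=P_C(0)=\det(D_C-I_n)=\prod_v(\deg_C(v)-1).
\]
This product is nonzero because every degree is at least two. Hence \(|m_\lambda(0)|\) divides it.

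\textbf{Main obstacle.} The only delicate point is justifying \eqref{eq:char-bass} in the presence of loops and multi-edges under the half-edge convention. I would take this as the cited Ihara--Bass formula. Everything else is routine Perron--Frobenius theory and Gauss's lemma.
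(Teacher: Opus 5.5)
Your proposal is correct and follows the paper's proof essentially step by step. Both arguments show that Galois conjugates of \(\lambda\) are eigenvalues of the integral matrix \(B_C\), which gives weak Perron, and both use Perron--Frobenius strict dominance for primitive \(B_C\). Both then get \(m_\lambda\mid P_C\) from \eqref{eq:char-bass}, since \((x^2-1)^{m-n}\) does not vanish at \(\lambda>1\), and finish with a monic integral quotient evaluated at \(x=0\); your check that \(m-n=b_1(C)-1\geq1\) is a harmless addition.
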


\begin{proof}
The matrix \(B_C\) is irreducible, nonnegative, and integral.  Its spectral
radius is therefore a positive algebraic integer, and every Galois conjugate
of \(\lambda\) is also an eigenvalue of \(B_C\); hence its modulus is at most
\(\lambda\).  If \(B_C\) is primitive, Perron--Frobenius theory makes this
inequality strict for every other eigenvalue.

For \(\lambda>1\), \eqref{eq:char-bass} shows that
\(m_\lambda(x)\) divides the degree-\(2|V(C)|\) polynomial \(P_C(x)\).  By
Gauss' lemma the quotient is monic and integral.  Since
\[
  P_C(0)=\det(D_C-I_n)=\prod_v(\deg_C(v)-1),
\]
the divisibility of constant terms follows.
\end{proof}

The use of weak Perron numbers is essential.  Subdivision may introduce
periodicity into \(B_C\), producing conjugates of equal modulus.  More
generally, if \(p=p(B_C)\) is the period just defined, then \(\lambda^p\) is a
Perron number.  Indeed, every peripheral eigenvalue has the form
\(\lambda\zeta\) with \(\zeta^p=1\); after taking \(p\)-th powers these values
coalesce.  To account for every Galois conjugate, let
\(\sigma:\mathbb Q(\lambda^p)\hookrightarrow\mathbb C\) be an embedding and
extend it to an embedding of \(\mathbb Q(\lambda)\).  Then
\(\sigma(\lambda^p)=\sigma(\lambda)^p\), and \(\sigma(\lambda)\) is a Galois
conjugate of \(\lambda\), hence an eigenvalue of \(B_C\).  If it is
peripheral, its \(p\)-th power equals \(\lambda^p\); otherwise its modulus is
strictly smaller than \(\lambda^p\).  Thus every conjugate of \(\lambda^p\)
other than \(\lambda^p\) itself has strictly smaller modulus.

\subsection{Bipartite symmetry and transcendence}

There is also a useful symmetry in the type-preserving case.

\begin{proposition}[Bipartite symmetry]\label{prop:bipartite-symmetry}
If \(C\) has \(m\) unoriented edges and
\(V(C)=C_0\sqcup C_1\) is a bipartition, then, after ordering oriented edges
by the type of their terminal vertex, there are \(m\times m\) zero-one
matrices \(M\) and \(N\) such that
\[
  B_C=\begin{pmatrix}0&M\\N&0\end{pmatrix}.
\]
Consequently, \(\det(xI_{2m}-B_C)\) and \(P_C(x)\) are even polynomials, and
\[
  \rho(B_C)^2=\rho(MN)=\rho(NM).
\]
For a finitely generated type-preserving action on
\(\T_{r+1,s+1}\), one may therefore write
\[
  \delta_\Gamma
  =\frac12\log\rho(MN).
\]
\end{proposition}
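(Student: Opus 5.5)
The plan is to use the bipartition to split the oriented edges of \(C\) by the type of their terminal vertex. Set \(E_0=\{e\in\vec E(C):t(e)\in C_0\}\) and \(E_1=\{e:t(e)\in C_1\}\). Every edge joins \(C_0\) to \(C_1\), and reversal \(e\mapsto\bar e\) exchanges terminus and origin, so it is a bijection \(E_0\to E_1\). Hence \(|E_0|=|E_1|=m\).

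If \((B_C)_{e,f}=1\), then \(o(f)=t(e)\). Because \(f\) crosses the bipartition, \(t(f)\) has the type opposite to \(t(e)\). Consequently \(B_C\) maps \(E_0\)-states only to \(E_1\)-states and vice versa, which gives the block form with zero diagonal blocks. Here \(M\) records transitions \(E_0\to E_1\) and \(N\) records transitions \(E_1\to E_0\), and both are zero-one.

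Evenness of the characteristic polynomial follows by conjugating with \(J=\operatorname{diag}(I_m,-I_m)\). This gives \(JB_CJ^{-1}=-B_C\), so \(\det(xI-B_C)=\det(xI+B_C)\), which for size \(2m\) means \(\chi(-x)=\chi(x)\). Alternatively one can use the identity \(\det(xI-B_C)=\det(x^2I-MN)\) for block antidiagonal matrices, which also shows directly that the polynomial is a polynomial in \(x^2\).

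For \(P_C\), the bipartite vertex analogue is the same: conjugating by \(\operatorname{diag}(I_{C_0},-I_{C_1})\) sends \(A_C\) to \(-A_C\) and fixes \(D_C\). Hence \(P_C(-x)=\det(x^2I+xA_C+D_C-I)=P_C(x)\).

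The spectral radius statement comes from \(B_C^2=\operatorname{diag}(MN,NM)\). Its spectral radius is \(\rho(B_C)^2\), and also equals \(\max\{\rho(MN),\rho(NM)\}\). The matrices \(MN\) and \(NM\) have the same nonzero eigenvalues, since \(\det(yI-MN)=\det(yI-NM)\) for square matrices of equal size, so \(\rho(MN)=\rho(NM)\). The final formula then follows from \(\delta_\Gamma=\log\rho(B_C)\) in \cref{prop:finite-core-growth}, applied to the finite core given by \cref{lem:finite-core}. In the type-preserving case that core is bipartite, as noted in the proof of \cref{thm:intro-stratification}(ii). For the trivial group, or a cycle core, both sides are \(0\), because an even cycle has \(\rho=1\).

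The one place needing care is loops and parallel edges. A bipartite graph has no loops, and parallel edges cause no difficulty since all arguments are at the level of oriented edges. I expect no real obstacle; the subtlest step is the evenness of \(P_C\), which needs the sign-conjugation argument rather than simply citing \eqref{eq:char-bass}. From \eqref{eq:char-bass} alone one only gets evenness of \(P_C\) from evenness of \(\det(xI-B_C)\) after dividing by the even factor \((x^2-1)^{m-n}\), and that is valid as a polynomial identity only when \(m\ge n\). This holds for cores, since \(b_1\ge1\), but the direct conjugation avoids the issue.
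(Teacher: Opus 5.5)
Your proof is correct and is essentially the paper's argument: the block form comes from the type change along each non-backtracking step, \(B_C^2=\operatorname{diag}(MN,NM)\) gives the spectral-radius identity, and the vertex sign conjugation \(SA_CS=-A_C\), \(S(D_C-I)S=D_C-I\) gives evenness of \(P_C\); you additionally spell out the edge-level conjugation by \(\operatorname{diag}(I_m,-I_m)\) and why \(\rho(MN)=\rho(NM)\), which the paper leaves implicit. Two minor remarks: your caution about dividing by \((x^2-1)^{m-n}\) is unnecessary, since \(P_C\) is then a product or quotient of even polynomials whatever the sign of \(m-n\), and the trivial group has empty core, so it is excluded from the final formula rather than being a case in which both sides are \(0\).
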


\begin{proof}
A non-backtracking step changes the type of the terminal vertex, giving the
displayed block form.  Squaring the matrix gives diagonal blocks \(MN\) and
\(NM\).  For the polynomial assertion, let \(S\) be the diagonal matrix
whose entries are \(+1\) on \(C_0\) and \(-1\) on \(C_1\).  Then
\(S A_C S=-A_C\), while
\(S(D_C-I_{|V(C)|})S=D_C-I_{|V(C)|}\), which proves the evenness.
\end{proof}

Since \(\exp(\delta_\Gamma)\) is algebraic for a finitely generated free
action, Lindemann--Weierstrass gives a complementary statement about the
critical exponent itself; see, for example, \cite{Baker1990}.

\begin{corollary}\label{cor:transcendental-delta}
If a finitely generated free action has \(\delta_\Gamma>0\), then
\(\delta_\Gamma\) is transcendental.  Thus its algebraic structure is most
naturally recorded by the algebraic integer
\(\lambda_\Gamma=\exp(\delta_\Gamma)\), not by \(\delta_\Gamma\) itself.
\end{corollary}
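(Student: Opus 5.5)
The statement is \cref{cor:transcendental-delta}: a finitely generated free action with \(\delta_\Gamma>0\) has transcendental critical exponent. The plan is to reduce this to the Lindemann--Weierstrass theorem, using that \(\lambda_\Gamma=e^{\delta_\Gamma}\) is algebraic.

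First, I would establish the algebraicity of \(\lambda_\Gamma\). Since \(\Gamma\) is finitely generated and acts freely on the tree, \cref{lem:finite-core} gives a finite quotient core \(C\). By \cref{prop:finite-core-growth}, \(\lambda_\Gamma=e^{\delta_\Gamma}=\rho(B_C)\). If \(\Gamma\) is nontrivial and \(\delta_\Gamma>0\), then \(C\) is not a cycle, because a cycle gives \(\rho=1\) and hence \(\delta_\Gamma=0\). So \(\rho(B_C)>1\), and by \cref{prop:arithmetic}(i) it is an algebraic integer, being the spectral radius of the integral matrix \(B_C\), hence a root of \(\det(xI-B_C)\in\ZZ[x]\).

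Next, I would argue by contradiction. Suppose \(\delta_\Gamma\) were algebraic. It is nonzero by hypothesis. The Lindemann--Weierstrass theorem, in the special case of Lindemann's theorem, says that \(e^{\alpha}\) is transcendental for every nonzero algebraic \(\alpha\). Applied to \(\alpha=\delta_\Gamma\), it would make \(\lambda_\Gamma\) transcendental, contradicting the previous step. Hence \(\delta_\Gamma\) is transcendental. The closing sentence of the corollary is interpretive: the algebraic information is carried by \(\lambda_\Gamma\). It needs no proof beyond this dichotomy.

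There is essentially no technical obstacle. The only care needed is in the first step: checking that the hypothesis \(\delta_\Gamma>0\) excludes the trivial and cyclic cases, so that the arithmetic proposition applies. Even that is not really necessary, since \(\rho(B_C)\) is an eigenvalue of an integer matrix in all cases and is therefore algebraic. The nontriviality of \(\delta_\Gamma\) is needed only to apply Lindemann's theorem.
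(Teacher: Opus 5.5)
Your proposal is correct and follows the paper's proof: algebraicity of \(\lambda_\Gamma=\rho(B_C)\) via \cref{prop:arithmetic}, then the Hermite--Lindemann case of Lindemann--Weierstrass applied to a nonzero algebraic \(\delta_\Gamma\) gives a contradiction. The paper simply leaves implicit the reduction to a finite non-cyclic core, which you spell out.
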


\begin{proof}
If \(\delta_\Gamma\neq0\) were algebraic, the Lindemann--Weierstrass theorem
would make \(\exp(\delta_\Gamma)\) transcendental, contrary to
\cref{prop:arithmetic}.
\end{proof}

% =============================================================================
\section{Fixed-rank entropy strata and inverse realization}
\label{sec:inverse}
% =============================================================================

\subsection{Kernel reduction and weighted turn determinants}

Let \(C\) be a finite connected non-cyclic core.  Suppress every maximal
path whose internal vertices have degree two.  The resulting connected
multigraph \(K\), in which loops and parallel edges are retained, is called
the \emph{kernel} of \(C\).  Every vertex of \(K\) has degree at least three.
An unoriented edge \(e\in E(K)\) records a path in \(C\); denote its positive
integral length by \(\ell(e)\), and set
\(\ell(\bar e)=\ell(e)\) for the reverse orientation.  We write
\(C=K(\ell)\) for the subdivision reconstructed from this data.
The convention \(\mathbb N=\{1,2,3,\ldots\}\) fixed in the
introduction is used here.

The \emph{turn matrix} \(T_K\), indexed by \(\vec E(K)\), is defined by
\[
  (T_K)_{e,f}
  =
  \begin{cases}
    1,&t(e)=o(f)\text{ and }f\neq\bar e,\\
    0,&\text{otherwise}.
  \end{cases}
\]
For \(x>1\), put
\[
  D_\ell(x)
  =\operatorname{diag}\bigl(x^{\ell(f)}:f\in\vec E(K)\bigr)
\]
and define the weighted turn matrix
\begin{equation}\label{eq:weighted-turn}
  M_{K,\ell}(x)=T_KD_\ell(x)^{-1}.
\end{equation}
Thus an allowed transition whose next kernel edge is \(f\) has weight
\(x^{-\ell(f)}\).  All notation in the following theorem has now been
specified.

\begin{theorem}[Kernel pressure equation]\label{thm:kernel-pressure}
Let \(C=K(\ell)\) be a finite connected non-cyclic core.  There is a unique
number \(\alpha_{K,\ell}>1\) such that
\begin{equation}\label{eq:kernel-pressure-defined}
  \rho\!\left(M_{K,\ell}(\alpha_{K,\ell})\right)=1,
\end{equation}
and this number is
\[
  \alpha_{K,\ell}=\rho(B_C).
\]
Equivalently, for every prescribed \(\alpha>1\),
\begin{equation}\label{eq:kernel-pressure}
  \rho(B_C)=\alpha
  \quad\Longleftrightarrow\quad
  \rho\!\left(M_{K,\ell}(\alpha)\right)=1.
\end{equation}
The function \(x\mapsto\rho(M_{K,\ell}(x))\) is continuous and strictly
decreasing from a value greater than one to zero.

The pressure-selected number \(\alpha_{K,\ell}\) is the distinguished
positive zero of the monic integral polynomial
\begin{equation}\label{eq:kernel-polynomial}
  F_{K,\ell}(x)
  =\det\!\left(D_\ell(x)-T_K\right)\in\ZZ[x].
\end{equation}
Here ``distinguished'' means that
\(\rho(M_{K,\ell}(\alpha_{K,\ell}))=1\); no uniqueness assertion is made
about positive zeros of the determinant that are not selected by the
spectral-radius equation.  In particular, the minimal polynomial of
\(\alpha_{K,\ell}\) divides
\(F_{K,\ell}(x)\).
\end{theorem}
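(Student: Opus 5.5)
The plan is to reduce everything to \cref{lem:weighted-subdivision-pressure}, applied to the kernel \(K\) in place of \(C\). The kernel is a finite connected multigraph with minimum degree at least three. It is therefore a non-cyclic core, except that loops and parallel edges may occur; the lemma's argument never used simplicity, only the half-edge convention for loops. The matrix \(M_{K,\ell}(x)=T_KD_\ell(x)^{-1}\) is exactly \(W_{K,\ell}(x)\) of \eqref{eq:early-weighted-turn}, with \(T_K=B_K\). Moreover \(K(\ell)=C\), since suppressing maximal degree-two chains and then re-subdividing them by their recorded lengths recovers \(C\).

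The lemma then gives at once:
\begin{itemize}
\item a unique \(\alpha_{K,\ell}>1\) with \(\rho(M_{K,\ell}(\alpha_{K,\ell}))=1\);
\item the identity \(\alpha_{K,\ell}=\rho(B_{K(\ell)})=\rho(B_C)\);
\item continuity and strict decrease of \(x\mapsto\rho(M_{K,\ell}(x))\).
\end{itemize}
At \(x=1\) the value is \(\rho(T_K)>1\), because \(K\) is non-cyclic. As \(x\to\infty\) every entry tends to zero, so the spectral radius tends to zero. The equivalence \eqref{eq:kernel-pressure} is just uniqueness of the value-one crossing combined with strict monotonicity. The one point I will check is irreducibility of \(T_K\): \(K\) is connected with all degrees at least three, so the standard non-backtracking irreducibility for connected graphs of minimum degree two that are not cycles applies.

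For the polynomial statement, I use that \(\rho(M)=1\) for the nonnegative irreducible matrix \(M=M_{K,\ell}(\alpha)\). Perron--Frobenius then makes \(1\) an eigenvalue of \(M\), so
\[
\det\bigl(I-T_KD_\ell(\alpha)^{-1}\bigr)=0 .
\]
Multiplying on the right by \(D_\ell(\alpha)\) gives
\[
\det\bigl(D_\ell(\alpha)-T_K\bigr)=0 ,
\]
that is, \(F_{K,\ell}(\alpha)=0\).

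To see that \(F_{K,\ell}\) is monic and integral, expand the determinant. The entries are \(x^{\ell(f)}\) on the diagonal, minus \(0/1\) constants, and off-diagonal integers, so \(F_{K,\ell}\in\ZZ[x]\). The unique top-degree term is \(\prod_f x^{\ell(f)}=x^{2\sum_e\ell(e)}\), coming from the diagonal product. Every other permutation loses at least two diagonal factors and so has strictly smaller degree.

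Since \(\alpha_{K,\ell}\) is a root of a monic integer polynomial, it is an algebraic integer. Its minimal polynomial divides \(F_{K,\ell}\) in \(\QQ[x]\), and Gauss' lemma upgrades this to divisibility in \(\ZZ[x]\). The caveat about other positive zeros of \(F_{K,\ell}\) requires nothing: we only claim that the spectrally selected root is a zero.

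The main obstacle is confirming that \cref{lem:weighted-subdivision-pressure} legitimately applies to kernels carrying loops and parallel edges. If needed, I would rerun its path-cutting argument verbatim. A non-backtracking path in \(C\) cut at branch vertices, meaning original vertices of \(K\), is a non-backtracking path in \(K\) weighted by \(x^{-\ell}\). Non-backtracking in \(C\) along an internal chain forces traversing the whole chain, which is why this cutting is valid. Loops are covered because their two orientations are distinct states and a reversal is excluded exactly by \(f\neq\bar e\).
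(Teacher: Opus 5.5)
Your proposal is correct and follows the paper's proof essentially verbatim: identify \(T_K=B_K\) and \(M_{K,\ell}=W_{K,\ell}\), apply \cref{lem:weighted-subdivision-pressure} to the kernel using \(K(\ell)=C\), and read off monicity from the diagonal term via \(\det(I-T_KD_\ell(x)^{-1})\det D_\ell(x)=\det(D_\ell(x)-T_K)\). Your extra checks only make explicit what the paper leaves implicit: loop orientations give diagonal entries \(x^{\ell(f)}-1\), Gauss' lemma gives the divisibility, and the lemma applies to kernels with loops and parallel edges.
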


\begin{proof}
The turn matrix \(T_K\) is precisely the Hashimoto matrix \(B_K\), and
\(M_{K,\ell}(x)=W_{K,\ell}(x)\) in the notation of
\eqref{eq:early-weighted-turn}.  Since \(C=K(\ell)\), the pressure equation,
continuity, strict monotonicity, and uniqueness are exactly
\cref{lem:weighted-subdivision-pressure}.

Finally,
\[
  \det(I-T_KD_\ell(x)^{-1})
  =\frac{\det(D_\ell(x)-T_K)}{\det D_\ell(x)}.
\]
The leading term of the numerator is
\(\prod_{f\in\vec E(K)}x^{\ell(f)}\), with coefficient one, and every other
term has smaller degree.  This proves the polynomial and divisibility
assertions.
\end{proof}

Give each vertex of a kernel \(K\) a type through a map
\(\tau:V(K)\to\mathbb Z/2\mathbb Z\).  A length vector
\(\ell\in\mathbb N^{E(K)}\) is \emph{admissible for \(\tau\)} if
\begin{equation}\label{eq:kernel-parity}
  \ell(e)\equiv\tau(o(e))+\tau(t(e))\pmod2
\end{equation}
for every oriented kernel edge \(e\).  This is equivalent to the existence
of a bipartite type extension over the subdivision \(K(\ell)\).

Two typed kernels \((K,\tau)\) and \((K',\tau')\) are
\emph{type-preservingly isomorphic} if there is a multigraph isomorphism
\(\varphi:K\to K'\) such that \(\tau'\circ\varphi=\tau\).  For kernel--length
data we additionally require
\(\ell'(\varphi(e))=\ell(e)\) for every unoriented edge \(e\in E(K)\).

\begin{theorem}[Typed finite-kernel parametrization]
\label{thm:fixed-rank-kernels}
Fix \(g\geq2\) and \(r,s\geq2\).  Up to type-preserving isomorphism, let
\(\mathcal K_{g;r,s}\) be the finite set of pairs \((K,\tau)\) satisfying
\[
  b_1(K)=g,
  \qquad
  3\leq\deg_K(v)\leq
  \begin{cases}
    r+1,&\tau(v)=0,\\
    s+1,&\tau(v)=1.
  \end{cases}
\]
Then the Hashimoto spectral radii of finite connected bipartite cores of
rank \(g\) satisfying the two biregular degree bounds are exactly
\begin{equation}\label{eq:fixed-rank-parametrization}
  \bigcup_{(K,\tau)\in\mathcal K_{g;r,s}}
  \left\{
    \alpha_{K,\ell}:\ell\in\mathbb N^{E(K)}
      \text{ is admissible for }\tau
  \right\},
\end{equation}
where \(\alpha_{K,\ell}>1\) is the unique solution of
\(\rho(M_{K,\ell}(\alpha_{K,\ell}))=1\).  Moreover,
\[
  |V(K)|\leq2g-2,
  \qquad
  |E(K)|\leq3g-3.
\]
\end{theorem}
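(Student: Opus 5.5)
The plan is to prove the two inclusions of \eqref{eq:fixed-rank-parametrization} separately and to derive the size bounds from a degree count. All spectral content comes from \cref{thm:kernel-pressure}; the rest is combinatorics of suppressing and restoring degree-two chains.

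We begin with the bounds, since they also give finiteness of \(\mathcal K_{g;r,s}\). Let \(K\) be a connected multigraph with \(b_1(K)=g\) and all degrees at least three. The excess-degree identity gives
\[
  \sum_{v}(\deg_K(v)-2)=2|E(K)|-2|V(K)|=2g-2 .
\]
Each term is at least one, so \(|V(K)|\leq2g-2\). Then \(|E(K)|=|V(K)|+g-1\leq3g-3\). Only finitely many multigraphs with at most \(2g-2\) vertices and \(3g-3\) edges exist, loops and parallel edges included. Each carries only \(2^{|V(K)|}\) type maps \(\tau\), so \(\mathcal K_{g;r,s}\) is finite up to type-preserving isomorphism.

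For the first inclusion, let \(C=C_0\sqcup C_1\) be a finite connected bipartite core of rank \(g\geq2\) satisfying the degree bounds. It has minimum degree two and is not a cycle, since a cycle has rank one. Suppressing maximal chains of degree-two vertices yields a kernel \(K\) with \(C=K(\ell)\) and \(b_1(K)=b_1(C)=g\), because subdivision preserves the Euler characteristic. Each kernel vertex is a vertex of \(C\) of degree at least three, and its degree is unchanged by suppression. Giving it the type inherited from the bipartition of \(C\) therefore places \((K,\tau)\) in \(\mathcal K_{g;r,s}\). A suppressed path of length \(\ell(e)\) alternates types, so its endpoint types differ by \(\ell(e)\bmod2\); this is exactly \eqref{eq:kernel-parity}. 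By \cref{thm:kernel-pressure}, \(\rho(B_C)=\alpha_{K,\ell}\).

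One point needs care here: a kernel whose only vertices lie on a single cycle cannot occur when \(g\geq2\). I expect the main obstacle to be the degenerate configurations, namely loops, whose admissibility requires \(\ell\) even, and confirming that the suppression is well defined. This works because a connected core of rank at least two that is not a cycle always contains a vertex of degree at least three, so every maximal degree-two chain ends at kernel vertices.

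For the reverse inclusion, take \((K,\tau)\in\mathcal K_{g;r,s}\) and \(\ell\) admissible. Subdivide every edge \(e\) of \(K\) into a path of length \(\ell(e)\). Starting from \(\tau\) at the endpoints, alternate types along each path; admissibility makes these assignments consistent at both ends. Loops of even length become even cycles, and a loop of length at least two is genuinely subdivided, so \(K(\ell)\) is a simple-enough bipartite graph. The result is a finite connected bipartite core \(C=K(\ell)\) with \(b_1(C)=g\). Its new vertices have degree \(2\), which meets both lower bounds, and its old vertices keep the degrees allowed by the definition of \(\mathcal K_{g;r,s}\). \Cref{thm:kernel-pressure} then gives \(\rho(B_C)=\alpha_{K,\ell}>1\), which finishes the parametrization.
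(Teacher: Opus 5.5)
Your proposal is correct and follows essentially the same route as the paper. The paper also uses the excess-degree identity \(\sum_v(\deg_K(v)-2)=2g-2\) for the size bounds and finiteness, suppression and subdivision with the parity condition \eqref{eq:kernel-parity} for the two inclusions, and \cref{thm:kernel-pressure} for the spectral identification; your added remarks on why suppression is well defined at rank at least two and why admissible loops become even cycles are details the paper leaves implicit.
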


\begin{proof}
Kernel suppression preserves the first Betti number and the types and
degrees of all branching vertices.  It forces \eqref{eq:kernel-parity}
because the type changes across every unit edge.  Conversely, an admissible
length vector extends the kernel types across every subdivided edge and
reconstructs a bipartite core with the required degree bounds.  The exact
spectral parametrization follows from \cref{thm:kernel-pressure}.

Since \(b_1(K)=|E(K)|-|V(K)|+1=g\), the handshaking identity gives
\[
  \sum_{v\in V(K)}(\deg_K(v)-2)=2g-2.
\]
Every summand is at least one.  Hence \(|V(K)|\leq2g-2\), and then
\(|E(K)|=|V(K)|+g-1\leq3g-3\).  Only finitely many multigraphs, including
loops and parallel edges, have these bounds, and each has finitely many type
maps.
\end{proof}

\subsection{Finiteness for a prescribed exponential rate}

The finite-kernel theorem parametrizes all fixed-rank values, but a priori a
single value might have infinitely many length realizations.  The next result
rules this out.

\begin{lemma}[Dickson]\label{lem:dickson}
Every antichain in \(\mathbb N^m\), ordered coordinatewise, is finite.
\end{lemma}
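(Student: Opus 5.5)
The statement is Dickson's lemma: every antichain in \(\mathbb N^m\) under the coordinatewise order is finite. I would prove the stronger and more convenient assertion that every sequence \((v_n)_{n\geq1}\) in \(\mathbb N^m\) has a coordinatewise nondecreasing subsequence. This suffices. An infinite antichain would yield a sequence of pairwise distinct, pairwise incomparable vectors. Any nondecreasing subsequence of it would contain two distinct comparable terms, which is a contradiction.

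The argument is by induction on \(m\). For \(m=1\), any sequence of positive integers has a nondecreasing subsequence. Either some value is attained infinitely often, and we take the constant subsequence. Or every value is attained only finitely often, so the sequence tends to infinity and we extract an increasing subsequence greedily. For the inductive step, given \((v_n)\) in \(\mathbb N^{m}\), apply the one-dimensional case to the first coordinates. This gives a subsequence along which the first coordinate is nondecreasing. Then apply the induction hypothesis to the remaining \(m-1\) coordinates of that subsequence, and pass to a further subsequence. Nondecreasing behaviour in the first coordinate is inherited by subsequences, so the final subsequence is nondecreasing in all coordinates.

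As an alternative, I could apply Ramsey's theorem to pairs \(i<j\), coloured by which coordinates satisfy \(v_i(k)\leq v_j(k)\). Well-foundedness of \(\mathbb N\) rules out an infinite homogeneous set on which some coordinate strictly decreases. I prefer the subsequence induction because it is fully elementary.

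There is no real obstacle here. The only point needing care is making sure subsequence extraction preserves the monotonicity already obtained, and this holds trivially. In the later application to part (ii) of \cref{thm:intro-fixed-rank}, the lemma is used together with the strict monotonicity in \cref{lem:weighted-subdivision-pressure}. For a fixed typed kernel, the admissible \(\ell\) with \(\alpha_{K,\ell}=\alpha\) form an antichain, since comparable distinct vectors give distinct radii. Hence they are finite in number, and there are finitely many kernels by \cref{thm:fixed-rank-kernels}.
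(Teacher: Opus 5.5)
Your proof is correct and uses essentially the same induction on \(m\) as the paper. The difference is cosmetic: you prove the stronger nondecreasing-subsequence form and place the ``some value repeats infinitely often versus the values tend to infinity'' dichotomy in the base case, whereas the paper extracts a single pair \(u_i\leq u_j\) with \(i<j\) and makes that dichotomy on the first coordinate in the inductive step.
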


\begin{proof}
Equivalently, every infinite sequence in \(\mathbb N^m\) contains two terms
\(u_i\leq u_j\) with \(i<j\).  This follows by induction on \(m\).  For the
induction step, either some first coordinate occurs infinitely often, in
which case one applies the induction hypothesis to the remaining
coordinates, or one passes to a subsequence with strictly increasing first
coordinates and again applies the induction hypothesis to the remaining
coordinates.
\end{proof}

\begin{theorem}[Fixed-rank finiteness]\label{thm:fixed-rank-finiteness}
Fix \(\alpha>1\), \(g\geq2\), and \(r,s\geq2\).  There are only finitely
many typed kernel--length data \((K,\tau,\ell)\), up to type-preserving
isomorphism, for which
\[
  (K,\tau)\in\mathcal K_{g;r,s},
  \qquad \ell\text{ is admissible for }\tau,
  \qquad
  \rho(B_{K(\ell)})=\alpha.
\]
Consequently, only finitely many type-preserving isomorphism classes of
rank-\(g\) typed core subdivisions realize \(\alpha\).
\end{theorem}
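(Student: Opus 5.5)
Since \(\mathcal K_{g;r,s}\) is finite by \cref{thm:fixed-rank-kernels}, it suffices to fix one typed kernel \((K,\tau)\) and show that the set
\[
  L_\alpha=\{\ell\in\mathbb N^{E(K)}:\ell\text{ admissible for }\tau,\ \alpha_{K,\ell}=\alpha\}
\]
is finite. The final assertion about core subdivisions then follows, because a typed core subdivision is determined up to type-preserving isomorphism by its kernel--length data \((K,\tau,\ell)\).

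The key step is strict monotonicity from \cref{lem:weighted-subdivision-pressure}, applied with \(C=K\). Note that \(K\) is a finite connected non-cyclic core: its degrees are at least three and \(g\geq2\). Suppose \(\ell\leq\ell'\) coordinatewise with \(\ell\neq\ell'\). Then \eqref{eq:strict-subdivision-pressure} gives
\[
  \alpha_{K,\ell'}=\rho(B_{K(\ell')})<\rho(B_{K(\ell)})=\alpha_{K,\ell},
\]
where we also use \cref{thm:kernel-pressure} to identify \(\alpha_{K,\ell}\) with \(\rho(B_{K(\ell)})\). Hence two distinct elements of \(L_\alpha\) are never comparable in the coordinatewise order, so \(L_\alpha\) is an antichain in \(\mathbb N^{E(K)}\). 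By Dickson's lemma (\cref{lem:dickson}) the antichain \(L_\alpha\) is finite.

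Combining this over the finitely many typed kernels gives finitely many data \((K,\tau,\ell)\). Passing to type-preserving isomorphism classes can only reduce the count. I do not expect a serious obstacle here. The only points needing care are that the comparison in \cref{lem:weighted-subdivision-pressure} is strict, which holds because \(B_K\) is irreducible on a non-cyclic core, and that the admissibility constraint merely restricts to a subset of an antichain.
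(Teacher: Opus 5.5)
Your proof is correct and matches the paper's argument: you reduce to a single typed kernel using the finiteness of \(\mathcal K_{g;r,s}\), use strict monotonicity of the Perron root under coordinatewise length increase to show that the realizing length vectors form an antichain, and conclude with Dickson's lemma. The only difference is cosmetic: you cite the strict subdivision inequality \eqref{eq:strict-subdivision-pressure}, whereas the paper redoes the underlying strict Perron--Frobenius comparison of \(M_{K,\ell'}(\alpha)\) and \(M_{K,\ell}(\alpha)\) directly.
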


\begin{proof}
There are finitely many possible typed kernels by
\cref{thm:fixed-rank-kernels}, so fix one of them.  If
\(\ell'\geq\ell\) coordinatewise and the inequality is strict in at least one
coordinate, then
\[
  M_{K,\ell'}(\alpha)
  \leq M_{K,\ell}(\alpha)
\]
with a strict inequality in at least one allowed entry.  Both matrices are
irreducible and have the same support.  Strict Perron--Frobenius comparison
therefore gives
\[
  \rho(M_{K,\ell'}(\alpha))
  <\rho(M_{K,\ell}(\alpha)).
\]
Thus the admissible length vectors realizing \(\alpha\) form an antichain
in \(\mathbb N^{E(K)}\), which is finite by \cref{lem:dickson}.
\end{proof}

\subsection{Degeneration and rank drop}

Recall that for \(g\geq2\) and \(r,s\geq2\) we write
\[
  \mathcal R_{g;r,s}
  =
  \left\{\rho(B_C)>1:
  \begin{array}{l}
    C=C_0\sqcup C_1\text{ is a finite connected bipartite core},\\
    b_1(C)=g,\quad
    \deg_C(v)\leq r+1\ (v\in C_0),\\
    \deg_C(v)\leq s+1\ (v\in C_1)
  \end{array}\right\}.
\]
The kernel description also controls how a sequence of fixed-rank algebraic
rates can accumulate.

For a nonnegative matrix, its \emph{support digraph} has one vertex for each
state and an arrow for each positive entry.  A state is \emph{recurrent} if it
lies in a strongly connected component containing a directed cycle; all other
states are \emph{transient}.

\begin{lemma}[Recurrent blocks after deleting weighted edges]
\label{lem:recurrent-bounded-core}
Let \(K\) be a finite connected kernel, let \(E_{\mathrm b}\subseteq E(K)\),
where the subscript \({\mathrm b}\) anticipates the bounded-edge set in
the application below, and assign positive transition weights to the orientations of edges in
\(E_{\mathrm b}\) and weight zero to all remaining columns of the turn
matrix.  After zero and transient states are removed, the irreducible diagonal
blocks come from the connected components of the cyclic core of the subgraph
\(K_{\mathrm b}\) spanned by \(E_{\mathrm b}\).  Each non-cyclic component
contributes one irreducible weighted non-backtracking block; a cycle
contributes two deterministic blocks, one for each orientation.
\end{lemma}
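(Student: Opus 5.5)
The plan is to read off the support digraph of the weighted matrix directly. Call it \(M\); it is \(T_K\) with every column indexed by an orientation of an edge outside \(E_{\mathrm b}\) set to zero. An arrow \(e\to f\) is present exactly when \(t(e)=o(f)\), \(f\neq\bar e\), and \(f\) is an orientation of an edge of \(E_{\mathrm b}\).

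First we remove the states that cannot be recurrent. A state \(e\) whose underlying edge lies outside \(E_{\mathrm b}\) has no incoming arrows, since its column vanishes. It therefore lies on no directed cycle and is zero or transient. Every directed cycle of the support digraph thus uses only orientations of edges in \(E_{\mathrm b}\). Such a cycle is precisely a closed non-backtracking path in \(K_{\mathrm b}\) that is also non-backtracking across its closing turn, that is, a cyclically reduced closed path in \(K_{\mathrm b}\). By the definition of the cyclic core, the recurrent states are exactly the oriented edges of \(\Core(K_{\mathrm b})\). The converse needs care: an edge of the core lies on a cyclically reduced closed path by definition, and that path yields a directed cycle through both orientations of the edge, since the reversed path is also cyclically reduced. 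Oriented edges of \(K_{\mathrm b}\) lying in hanging trees are transient, because a non-backtracking path entering a hanging tree cannot return.

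Next we determine the strongly connected components. Restricted to the recurrent states of a single component \(C'\) of \(\Core(K_{\mathrm b})\), the support of \(M\) equals the support of the Hashimoto matrix \(B_{C'}\). This holds because transitions between core edges never depend on hanging edges, and the positive weights do not change the support. No arrow joins orientations in two different components, since consecutive edges share a vertex. So the irreducible blocks are the irreducible components of the \(B_{C'}\).

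For a non-cyclic component, \(C'\) is a finite connected core of minimum degree at least two that is not a cycle. Its Hashimoto matrix is then irreducible, as recorded before \eqref{eq:hashimoto-topological-entropy}, giving one block. For a cycle component, each orientation has exactly one successor, so \(B_{C'}\) is a disjoint union of two directed cycles, one per orientation, giving two deterministic blocks.

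The main obstacle is the non-cyclic irreducibility. We use the standard fact that one can reverse direction within \(C'\) by travelling to an embedded cycle or a branching vertex, turning there, and returning. I would cite it as the property already assumed in the paper, and if necessary sketch it: from any oriented edge one reaches a vertex of degree at least three or a cycle, and then reaches every oriented edge.
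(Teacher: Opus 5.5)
Your proposal is correct and follows essentially the same route as the paper's proof. Zero columns confine every directed cycle to orientations of edges in \(E_{\mathrm b}\), and directed cycles are exactly cyclically reduced closed paths, so the recurrent states are the oriented edges of the cyclic core of \(K_{\mathrm b}\). Distinct core components cannot communicate, and the quoted irreducibility of \(B_C\) for non-cyclic cores, together with the two deterministic orientation classes of a cycle, gives the blocks. You spell out a few steps the paper leaves implicit: the reversed-path argument that puts both orientations of a core edge on directed cycles, and the transience of hanging-tree edges.
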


\begin{proof}
A recurrent state belongs to a directed cycle in the support digraph.  Since
every column outside \(E_{\mathrm b}\) is zero, every state on such a cycle is
an orientation of an edge in \(E_{\mathrm b}\).  Directed cycles in the turn
graph are exactly cyclically reduced closed edge paths in
\(K_{\mathrm b}\).  Their union is the oriented edge set of the cyclic core.
Within each connected cyclic-core component, the non-backtracking states form
one irreducible class unless the component is a cycle, in which case its two
orientations give the two deterministic irreducible classes.  Distinct
cyclic-core components cannot communicate recurrently.  This gives exactly
the asserted block decomposition, with the inherited positive weights.
\end{proof}

\begin{lemma}[Pressure invariance under suppression]
\label{lem:pressure-suppression}
Let \(H\) be a finite connected non-cyclic core with a positive integral
length \(a(e)\) on every edge.  Suppress all maximal degree-two paths to
obtain a kernel \(K'\), and give the resulting kernel edge the sum
\(\ell'(e')\) of the lengths along the suppressed path.  Then, for every
\(x>1\),
\[
  \rho\!\left(W_{H,a}(x)\right)=1
  \quad\Longleftrightarrow\quad
  \rho\!\left(M_{K',\ell'}(x)\right)=1.
\]
\end{lemma}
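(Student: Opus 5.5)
The plan is to compare the two weighted path series directly, as in the proof of \cref{lem:weighted-subdivision-pressure}, rather than to manipulate spectral radii. For \(x>1\) and an irreducible nonnegative matrix \(W\), the Neumann series \(\sum_n W^n\) has finite entries exactly when \(\rho(W)<1\). Moreover, \(x\mapsto\rho(W_{H,a}(x))\) and \(x\mapsto\rho(M_{K',\ell'}(x))\) are both continuous and strictly decreasing, from a value greater than one to zero, by \cref{lem:weighted-subdivision-pressure} and \cref{thm:kernel-pressure}. So each equation \(\rho(\cdot)=1\) has a unique solution, characterized as the threshold between divergence and convergence of the corresponding weighted series. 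It therefore suffices to show that, for each \(x>1\), the weighted series of \(W_{H,a}(x)\) converges if and only if that of \(M_{K',\ell'}(x)\) converges. Equality of the two thresholds then gives the stated equivalence.

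\textbf{Path bijection.} I will set up the correspondence between non-backtracking paths. Every vertex of \(K'\) is a branching vertex of \(H\), of degree at least three. Each oriented kernel edge \(e'\) corresponds to a unique maximal oriented chain \(f_1\cdots f_k\) in \(H\) whose internal vertices have degree two, and \(\ell'(e')=a(f_1)+\cdots+a(f_k)\). A non-backtracking path in \(H\) between branching vertices cannot turn inside a chain. It therefore decomposes uniquely into full chains, and consecutive chains satisfy the kernel turn rule: re-entering along the reverse of the last edge of a chain would mean backtracking. Conversely, every non-backtracking kernel path expands uniquely. The weight \(x^{-a}\) is multiplicative, so the weight of the expansion is the product of the chain weights, namely \(x^{-\ell'(e')}\). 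Hence the weighted sum over \(H\)-paths running from branching vertex to branching vertex equals the weighted sum over kernel paths.

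\textbf{Endpoints.} The remaining work is to handle paths in \(H\) that start or end inside a chain. Each such path is a kernel-type path with an initial and terminal partial chain attached. There are finitely many such partial chains, each with positive bounded weight. The full \(H\)-series is therefore bounded above and below by positive constants times the kernel series, plus finitely many finite terms coming from paths that lie entirely inside one chain (a path not reaching a branching vertex has length less than the chain length). Since the entries of an irreducible matrix's Neumann series are all finite or all infinite, convergence transfers in both directions.

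\textbf{Main obstacle.} The delicate case is a cycle of \(H\) through a single branching vertex, which becomes a loop in \(K'\). This is where the half-edge convention matters: the loop must contribute two distinct oriented states, and the turn rule \(f\neq\bar e\) must still match non-backtracking in \(H\). Consecutive traversals of a loop in the same direction are allowed in both settings, and the reverse traversal is forbidden in both. Irreducibility of \(M_{K',\ell'}\) follows because \(K'\) is a connected non-cyclic core with minimum degree at least three.
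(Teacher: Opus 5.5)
Your argument is correct, but it is not the paper's route. The paper does no new path counting here. It notes that the unweighted subdivisions \(H(a)\) and \(K'(\ell')\) are canonically isomorphic. Then \cref{lem:weighted-subdivision-pressure}, applied to \((H,a)\), and \cref{thm:kernel-pressure}, applied to \((K',\ell')\), identify the unique value-one point of each operator with the Hashimoto spectral radius of that one graph, so the two equations have the same solution. In effect the cutting argument is used twice, once from each side down to the common subdivision, and the lemma takes two lines.

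You instead perform a single cutting argument directly from \(H\) to \(K'\). This proves \(\rho(W_{H,a}(x))<1\iff\rho(M_{K',\ell'}(x))<1\) for every \(x>1\), and you then match the thresholds using continuity and strict monotonicity. This duplicates work already contained in \cref{lem:weighted-subdivision-pressure}. On the other hand, your path comparison never uses integrality of the lengths or the unweighted subdivision. It therefore applies verbatim to arbitrary positive real lengths, once continuity and strict decrease are supplied by the same Perron--Frobenius comparison. It also compares the two operators at every \(x\), not only at the value-one point.

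Your endpoint paragraph is correct but more than you need. A step's weight is charged to the edge being entered. So if \(f\) and \(g\) are the terminal oriented edges of the chains of kernel edges \(E\) and \(F\), then \(\sum_{n\ge0}(W_{H,a}(x)^n)_{f,g}=\sum_{n\ge0}(M_{K',\ell'}(x)^n)_{E,F}\) exactly. Irreducibility of both matrices then transfers convergence of this single entry to all entries.
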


\begin{proof}
The subdivided graphs \(H(a)\) and \(K'(\ell')\) are canonically
isomorphic: suppression merely combines each forced degree-two chain, and
its new length is the sum of the old lengths.  By
\cref{lem:weighted-subdivision-pressure,thm:kernel-pressure}, the unique
value-one point of either weighted operator is the Hashimoto spectral
radius of this same subdivided graph.  The two displayed pressure equations
are therefore equivalent.
\end{proof}

\begin{lemma}[Proper-core Betti drop]
\label{lem:proper-core-betti-drop}
Let \(K\) be a finite connected multigraph of minimum degree at least three.
If \(H\) is a connected core contained in a proper edge subgraph of \(K\),
then
\[
  b_1(H)<b_1(K).
\]
\end{lemma}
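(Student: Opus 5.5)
The plan is to compare first Betti numbers through Euler characteristic, using the fact that removing an edge from a connected graph either disconnects it or lowers \(b_1\) by one. Let \(K_0\subsetneq K\) be the proper edge subgraph containing \(H\). Then some edge \(e\in E(K)\) is missing from \(K_0\), and so \(H\subseteq K\setminus e\), the spanning subgraph obtained by deleting \(e\) but keeping all vertices. The argument will split into two cases according to whether \(e\) is a bridge of \(K\).

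\emph{Case 1: \(e\) is not a bridge.} Then \(K\setminus e\) is connected and \(b_1(K\setminus e)=|E(K)|-1-|V(K)|+1=b_1(K)-1\). For a connected subgraph \(H\) of a graph \(G\) we have \(b_1(H)\leq b_1(G')\), where \(G'\) is the component of \(G\) containing \(H\). This is the monotonicity of \(b_1\) along subgraph inclusion for connected graphs. To verify it, extend a spanning tree of \(H\) to a spanning tree of \(G'\). The non-tree edges of \(H\) are then non-tree edges of \(G'\), and \(b_1\) counts exactly these non-tree edges. Hence \(b_1(H)\leq b_1(K)-1<b_1(K)\).

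\emph{Case 2: \(e\) is a bridge.} Then \(K\setminus e\) has two components \(A,B\), and \(H\) lies in one of them, say \(A\). Counting edges gives \(b_1(A)+b_1(B)=b_1(K)\). So it suffices to show \(b_1(B)\geq1\), and this is where minimum degree three is used. The component \(B\) is finite, and every vertex of \(B\) has degree at least three in \(K\). Deleting the bridge \(e\) lowers the degree of exactly one vertex of \(B\) by one. So every vertex of \(B\) keeps degree at least two in \(B\), loops counting two. A finite graph of minimum degree two is not a forest, because a finite forest with at least one vertex has a vertex of degree at most one. Therefore \(b_1(B)\geq1\), and monotonicity applied inside \(A\) gives \(b_1(H)\leq b_1(A)\leq b_1(K)-1\).

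The only point needing care is the degree bookkeeping with loops and parallel edges in Case 2. A loop can never be a bridge, and the half-edge degree convention of \cref{sec:ihara} makes ``minimum degree two implies a cycle'' hold in multigraphs. Notice that the core hypothesis on \(H\) is not really needed beyond connectedness. I expect no step to be a genuine obstacle; the main thing to check is that the bridge case is handled by \(\deg\geq3\) and not by an extra assumption.
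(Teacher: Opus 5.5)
Your argument is correct, and it takes a different route from the paper's.

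\textbf{The paper's route.} The proof works globally with cycle spaces and argues by contradiction. The inclusion $H\subseteq K$ injects the cycle space of $H$ into that of $K$. If the two dimensions agreed, no edge of $K$ outside $H$ could lie on a cycle, so every such edge would be a bridge. Contracting the connected subgraph $H$ to a point then leaves a tree with at least one edge. A leaf of that tree other than the contracted vertex is a vertex of $K$ of degree one, which contradicts the minimum-degree hypothesis.

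\textbf{Your route.} You localize to a single missing edge $e$ and use Euler-characteristic bookkeeping with a bridge/non-bridge case split.
\begin{itemize}
\item When $e$ is not a bridge, deleting it lowers $b_1$ by exactly one, and monotonicity of $b_1$ along connected subgraphs finishes the argument. The degree hypothesis is not needed here.
\item When $e$ is a bridge, the additivity $b_1(A)+b_1(B)=b_1(K)$ reduces the claim to $b_1(B)\geq1$. This follows because every vertex of $B$ keeps degree at least two in $B$. The argument is symmetric in $A$ and $B$, so ``say $A$'' is harmless.
\end{itemize}

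\textbf{What each buys.} Your version yields a slightly sharper, uniform statement: for every edge $e$ of $K$, every connected subgraph of $K\setminus e$ has first Betti number at most $b_1(K)-1$. It also avoids the cycle-space injection and the contraction-and-lifting step, and it confines the degree hypothesis to the bridge case. The paper's version handles the entire complement of $H$ at once, with no case distinction, and uses the degree hypothesis only to rule out a degree-one vertex outside $H$.

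Both arguments rely on connectedness of $H$, and, as you observed, neither uses the core property of $H$.
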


\begin{proof}
The inclusion of graphs injects the cycle space of \(H\) into that of \(K\).
Suppose the dimensions were equal.  Then every cycle-space vector of \(K\)
would be supported on \(E(H)\), so no edge of \(K\setminus H\) could belong to
a cycle of \(K\).  Equivalently, every such edge would be a bridge.  Contract
the connected subgraph \(H\) to one vertex.  The remaining nonempty finite
graph is then a forest, because any cycle in the contraction would lift to a
cycle of \(K\) containing an edge outside \(H\).  Choose a leaf of this forest
different from the contracted vertex.  Its preimage is a vertex of \(K\)
outside \(H\) of degree one, contrary to the minimum-degree assumption.
Hence the inclusion of cycle spaces is proper, and the displayed inequality
follows.
\end{proof}

\begin{theorem}[Rank drop at an accumulation point]
\label{thm:rank-drop}
Let \(\alpha_n\in\mathcal R_{g;r,s}\) be pairwise distinct and suppose that
\(\alpha_n\to\alpha>1\).  Then
\[
  \alpha\in\mathcal R_{g_0;r,s}
\]
for some \(2\leq g_0<g\).  In particular,
\(\mathcal R_{2;r,s}\) has no accumulation point in \((1,\infty)\), and its
only accumulation point in \([1,\infty)\) is \(1\).
\end{theorem}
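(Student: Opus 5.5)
By \cref{thm:fixed-rank-kernels}, each \(\alpha_n=\alpha_{K_n,\ell_n}\) for some typed kernel \((K_n,\tau_n)\in\mathcal K_{g;r,s}\) and admissible length vector \(\ell_n\). Since \(\mathcal K_{g;r,s}\) is finite, after passing to a subsequence we may fix one typed kernel \((K,\tau)\).

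\emph{Reduction to a subsequence with some lengths tending to infinity.} The \(\alpha_n\) are pairwise distinct, so the \(\ell_n\) are pairwise distinct, and the sequence \((\ell_n)\) is unbounded. Passing to a further subsequence, each coordinate \(\ell_n(e)\) is either eventually constant or tends to \(\infty\). Let \(E_{\mathrm b}\) be the set of eventually constant coordinates, with limit values \(\ell_\infty(e)\). Then \(E_{\mathrm b}\ne E(K)\), since otherwise the \(\ell_n\) would eventually coincide. Passing to another subsequence, we may also assume that each \(\ell_n(e)\) for \(e\notin E_{\mathrm b}\) has constant parity. This is automatic anyway from \eqref{eq:kernel-parity}.

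\emph{Limit of the pressure operator.} Fix \(x>1\). As \(n\to\infty\), the matrix \(M_{K,\ell_n}(x)\) converges entrywise to the matrix \(M_\infty(x)\). This limit keeps the columns indexed by orientations of edges in \(E_{\mathrm b}\), with weights \(x^{-\ell_\infty(f)}\), and sets every other column to zero. The spectral radius is continuous in the matrix entries, so \(\rho(M_{K,\ell_n}(x))\to\rho(M_\infty(x))\).

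By \cref{lem:recurrent-bounded-core}, \(\rho(M_\infty(x))\) is the maximum over irreducible blocks. These blocks come from the components \(H\) of the cyclic core of \(K_{\mathrm b}\). A cycle component gives deterministic blocks whose spectral radius is \(x^{-\text{length}}<1\). A non-cyclic component \(H\), with lengths \(a=\ell_\infty|_{E(H)}\), gives \(W_{H,a}(x)\). The graph \(H\) is a connected core inside the proper edge subgraph \(K_{\mathrm b}\), so \cref{lem:proper-core-betti-drop} gives \(b_1(H)<g\). If \(H\) is non-cyclic then \(b_1(H)\ge2\).

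\emph{Identifying \(\alpha\).} First we show \(\rho(M_\infty(\alpha))=1\). Each function \(x\mapsto\rho(M_{K,\ell_n}(x))\) is decreasing and equals \(1\) at \(\alpha_n\to\alpha\). For \(\varepsilon>0\), monotonicity gives
\[
\rho(M_{K,\ell_n}(\alpha+\varepsilon))\le1\le\rho(M_{K,\ell_n}(\alpha-\varepsilon))
\]
for large \(n\). Letting \(n\to\infty\) yields
\[
\rho(M_\infty(\alpha+\varepsilon))\le1\le\rho(M_\infty(\alpha-\varepsilon)).
\]
Each block spectral radius is continuous in \(x\), so letting \(\varepsilon\to0\) gives \(\rho(M_\infty(\alpha))=1\).

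Cycle blocks have radius less than \(1\) at \(\alpha>1\). The maximum is therefore attained at a non-cyclic component \(H\) with \(\rho(W_{H,a}(\alpha))=1\). By \cref{lem:weighted-subdivision-pressure}, \(\alpha=\rho(B_{H(a)})\).

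\emph{Checking that \(H(a)\) is an admissible core.} The graph \(H(a)\) is a subgraph of \(K(\ell_n)\) for large \(n\), because it uses only edges whose lengths are eventually constant. Its types are therefore inherited from \(\tau\), its degrees are bounded by those in \(K\), and it is a connected bipartite core. Its rank is \(g_0=b_1(H)\in[2,g)\). Hence \(\alpha\in\mathcal R_{g_0;r,s}\); \cref{lem:pressure-suppression} confirms that suppressing the degree-two vertices of \(H\) does not affect this value.

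For \(g=2\) no such \(g_0\) exists, so \(\mathcal R_{2;r,s}\) has no accumulation point in \((1,\infty)\). Consequently its only possible accumulation point in \([1,\infty)\) is \(1\). That \(1\) really is an accumulation point follows by letting all lengths of a fixed rank-two kernel tend to infinity. The same limit computation applies, and \(M_\infty=0\) forces \(\alpha_n\to1\). Here one uses that \(\alpha_n\) is bounded below by \(1\) and that \(\rho(M_{K,\ell_n}(x))\to0\) for every \(x>1\).

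The main obstacle is the limit identification: we must ensure that the limit value \(1\) is realized by a non-cyclic block at exactly \(\alpha\), and not lost through transient states or cycle blocks. The monotone sandwich argument together with \cref{lem:recurrent-bounded-core} handles this.
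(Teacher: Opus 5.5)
Your proposal is correct and follows essentially the same route as the paper: fix one typed kernel, pass to a subsequence whose length coordinates are each constant or divergent, delete the divergent columns in the limiting weighted turn matrix, use the recurrent-block lemma to find a non-cyclic component $H$ of the cyclic core of $K_{\mathrm b}$ with pressure one at $\alpha$, and conclude with the proper-core Betti drop. The differences are only technical: the paper takes the joint limit $M_{K,\ell_n}(\alpha_n)\to M_\infty(\alpha)$ directly instead of your monotone sandwich at fixed $x$; it suppresses $H$ to a typed kernel before invoking the pressure equation, where you apply the weighted subdivision lemma to $H(a)\subseteq K(\ell_n)$ directly; and it shows that $1$ is an accumulation point using theta graphs of spectral radius $2^{1/L}$.
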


\begin{proof}
Represent \(\alpha_n\) by typed kernel--length data
\((K_n,\tau_n,\ell_n)\).  After passing to a subsequence, the finite-kernel
theorem allows us to assume \((K_n,\tau_n)=(K,\tau)\).  A diagonal
subsequence further ensures that,
for every \(e\in E(K)\), the integer sequence \(\ell_n(e)\) is either
constant or tends to infinity.  At least one edge length tends to infinity,
for otherwise only finitely many length vectors would occur and the
\(\alpha_n\) could not be pairwise distinct.

For every \(n\),
\[
  \rho(M_{K,\ell_n}(\alpha_n))=1.
\]
The matrices converge to a nonnegative matrix \(M_\infty\): the columns
belonging to unbounded edges vanish, while the other columns retain their
fixed lengths with \(x=\alpha\).  Continuity of spectral radius gives
\(\rho(M_\infty)=1\).  Let \(K_{\mathrm b}\), where the subscript
\({\mathrm b}\) stands for ``bounded'', be the subgraph of \(K\) formed by
the bounded edges, equipped with their eventual constant lengths.  After zero
and transient states are removed,
\cref{lem:recurrent-bounded-core} identifies the diagonal irreducible blocks
of \(M_\infty\) with the irreducible components of the weighted
non-backtracking operators of the connected components of the cyclic core of
\(K_{\mathrm b}\).  After the strongly connected components are ordered
topologically, \(M_\infty\) is block upper triangular.  Its spectral radius
is therefore the maximum of the spectral radii of its recurrent diagonal
blocks; the zero and transient diagonal blocks have spectral radius zero.
Since \(\rho(M_\infty)=1\), at least one recurrent block has spectral radius
one.  It cannot arise from a cycle, because every transition weight on a
weighted cycle is strictly less than one, so the spectral radius of each
deterministic cycle block is less than one.

Choose a non-cyclic connected component \(H\) of the cyclic core of
\(K_{\mathrm b}\) whose weighted non-backtracking block has spectral radius
one.  The component \(H\) inherits its vertex types, degree bounds, and fixed
positive integral edge lengths from \((K,\tau,\ell_n)\).  Suppressing its
degree-two vertices produces a typed kernel \((K',\tau')\) and an admissible
length vector \(\ell'\).  By \cref{lem:pressure-suppression}, forced-chain
suppression preserves the value-one pressure equation.  Hence
\[
  \rho(M_{K',\ell'}(\alpha))=1.
\]
Thus \(\alpha=\rho(B_{K'(\ell')})\), and \(K'(\ell')\) is a finite connected
bipartite core satisfying the two biregular degree bounds.

The core \(H\) lies in the proper edge subgraph \(K_{\mathrm b}\), because at
least one edge of \(K\) was discarded.  By
\cref{lem:proper-core-betti-drop}, \(b_1(H)<g\).  Suppression preserves the
first Betti number, so \(g_0=b_1(K')<g\).  Since
\(\alpha>1\), the new core is non-cyclic and \(g_0\geq2\).

For \(g=2\), no such \(g_0\) exists, so there is no accumulation above one.
On the other hand, the bipartite theta graphs with three paths of common
length \(L\) satisfy both typed degree bounds and have spectral radius
\(2^{1/L}\), which tends to one.
\end{proof}

\begin{proof}[Proof of \cref{thm:intro-fixed-rank}]
Part \textup{(i)} is \cref{thm:fixed-rank-kernels}, part \textup{(ii)} is
\cref{thm:fixed-rank-finiteness}, and part \textup{(iii)} is
\cref{thm:rank-drop}.
\end{proof}

\subsection{Complete inverse realization at rank two}

When \(g=2\), the excess-degree identity in the proof of
\cref{thm:fixed-rank-kernels} reads
\[
  \sum_v(\deg_K(v)-2)=2.
\]
It leaves precisely three kernels: one vertex of degree four carrying two
loops; two degree-three vertices joined by three parallel edges; and two
degree-three vertices, each carrying one loop, joined by one edge.  Their
subdivisions are respectively called a \emph{figure-eight},
\emph{theta graph}, and \emph{dumbbell graph}.

\begin{figure}[H]
\centering
\begin{tikzpicture}[
  x=1cm,y=1cm,
  edge/.style={line width=1.05pt,blue!62!black},
  bridge/.style={line width=1.1pt,gray!70!black},
  branch/.style={circle,draw=blue!70!black,fill=blue!58,
    inner sep=2.1pt},
  lab/.style={font=\small,fill=white,inner sep=1.5pt}
]
  % Figure-eight kernel.
  \node[branch] (f) at (0,0) {};
  \draw[edge] (f) .. controls (-1.25,.95) and (-1.25,-.95) .. (f);
  \draw[edge] (f) .. controls (1.25,.95) and (1.25,-.95) .. (f);
  \node[lab] at (-1.03,0) {$a$};
  \node[lab] at (1.03,0) {$b$};
  \node[font=\small\bfseries] at (0,-1.18) {figure-eight};

  % Theta kernel.
  \node[branch] (tu) at (4,0) {};
  \node[branch] (tv) at (6,0) {};
  \draw[edge] (tu) .. controls (4.65,.9) and (5.35,.9) .. (tv);
  \draw[edge] (tu)--(tv);
  \draw[edge] (tu) .. controls (4.65,-.9) and (5.35,-.9) .. (tv);
  \node[lab] at (5,.65) {$a$};
  \node[lab] at (5,.12) {$b$};
  \node[lab] at (5,-.65) {$c$};
  \node[font=\small\bfseries] at (5,-1.18) {theta};

  % Dumbbell kernel.
  \node[branch] (du) at (9.25,0) {};
  \node[branch] (dv) at (11.75,0) {};
  \draw[edge] (du) .. controls (8.15,.83) and (8.15,-.83) .. (du);
  \draw[bridge] (du)--(dv);
  \draw[edge] (dv) .. controls (12.85,.83) and (12.85,-.83) .. (dv);
  \node[lab] at (8.38,0) {$a$};
  \node[lab] at (10.5,.18) {$c$};
  \node[lab] at (12.62,0) {$b$};
  \node[font=\small\bfseries] at (10.5,-1.18) {dumbbell};
\end{tikzpicture}
\caption{The three rank-two kernels.  A label denotes the positive integral
length of the corresponding subdivided kernel edge.}
\label{fig:rank-two-kernels}
\end{figure}

Explicit Ihara zeta functions for all finite connected graphs without
degree-one vertices and circuit rank two were computed by Kwon--Lee
\cite{KwonLee2020}; the zeta function was subsequently shown to be a complete
invariant at rank two \cite{ChicoMattmanRichards2025}.  We use the kernel
pressure equation to select the Perron root, prove uniqueness, and formulate
the result as an inverse realization theorem.

\begin{theorem}[Rank-two polynomial classification]
\label{thm:rank-two-polynomials}
Let \(C\) be a finite connected core with \(b_1(C)=2\), and put
\(\alpha=\rho(B_C)\).  The kernel of \(C\) is of exactly one of the following
three types.
\begin{enumerate}[label=\textup{(\roman*)}]
  \item If \(C\) is a figure-eight with cycle lengths \(a,b\geq1\), then
  \begin{align}
    \frac{2}{\alpha^a+1}+\frac{2}{\alpha^b+1}&=1,
      \label{eq:figure-eight-pressure}\\
    (\alpha^a-1)(\alpha^b-1)&=4,                \label{eq:figure-eight-product}
  \end{align}
  and \(\alpha\) is the unique root \(x>1\) of
  \begin{equation}\label{eq:figure-eight-polynomial}
    R_{a,b}(x)=x^{a+b}-x^a-x^b-3.
  \end{equation}

  \item If \(C\) is a theta graph with path lengths \(a,b,c\geq1\), then
  \begin{equation}\label{eq:theta-pressure}
    \frac{1}{\alpha^a+1}
    +\frac{1}{\alpha^b+1}
    +\frac{1}{\alpha^c+1}=1,
  \end{equation}
  and \(\alpha\) is the unique root \(x>1\) of
  \begin{equation}\label{eq:theta-polynomial}
    \Theta_{a,b,c}(x)
    =x^{a+b+c}-x^a-x^b-x^c-2.
  \end{equation}

  \item If \(C\) is a dumbbell with cycle lengths \(a,b\geq1\) and corridor
  length \(c\geq1\), then
  \begin{equation}\label{eq:dumbbell-pressure}
    \alpha^{2c}(\alpha^a-1)(\alpha^b-1)=4,
  \end{equation}
  and \(\alpha\) is the unique root \(x>1\) of
  \begin{align}
    D_{a,b,c}(x)
    &=
    x^{a+b+2c}-x^{a+2c}-x^{b+2c}+x^{2c}-4.
    \label{eq:dumbbell-polynomial}
  \end{align}
\end{enumerate}
Conversely, the unique root \(x>1\) in each of these three families is the
Hashimoto spectral radius of the corresponding rank-two core.
\end{theorem}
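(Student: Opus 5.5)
The plan is to apply the kernel pressure equation, \cref{thm:kernel-pressure}, to each of the three rank-two kernels. The excess-degree identity already shows that these are the only kernels: figure-eight, theta and dumbbell. For each one, rather than computing \(\det(D_\ell(x)-T_K)\) directly, I will solve the eigenvalue problem \(M_{K,\ell}(x)v=v\) using the symmetry of the kernel. By Perron--Frobenius, \(\rho(M_{K,\ell}(x))=1\) holds exactly when there is a positive vector \(v\) with \(M_{K,\ell}(x)v=v\). I write \(v_e\) for the entry of \(v\) at the oriented edge \(e\). The relation reads
\[
  v_e=\sum_{f:\,o(f)=t(e),\,f\neq\bar e}x^{-\ell(f)}v_f .
\]
Once each case is reduced to a scalar equation in \(x\), I clear denominators to get the stated polynomial. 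Uniqueness of the root \(x>1\) then follows from strict monotonicity of \(x\mapsto\rho(M_{K,\ell}(x))\), since \(\rho\) starts above \(1\) and tends to \(0\).

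\emph{Theta.} Write \(e_1,e_2,e_3\) for the three edges oriented from \(u\) to \(w\), with lengths \(a,b,c\), and put \(y_i=x^{-\ell(e_i)}\). An oriented edge \(e_i\) arriving at \(w\) continues to \(\bar e_j\) for \(j\neq i\). Setting \(S=\sum_j y_jv_{\bar e_j}\) gives \(v_{e_i}=S-y_iv_{\bar e_i}\), and symmetrically \(v_{\bar e_i}=S'-y_iv_{e_i}\) with \(S'=\sum_j y_jv_{e_j}\). Solving for a positive solution, the symmetric ansatz \(S=S'\) gives \(v_{e_i}=v_{\bar e_i}=S/(1+y_i)\). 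Substituting back yields \(\sum_i y_i/(1+y_i)=1\), which is \eqref{eq:theta-pressure} because \(y/(1+y)=1/(x^\ell+1)\). To finish, I multiply out with \(X=x^a\), \(Y=x^b\), \(Z=x^c\). The identity \((X+1)(Y+1)(Z+1)=\sum(Y+1)(Z+1)\) simplifies to \(XYZ-X-Y-Z-2=0\).

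\emph{Figure-eight.} The single vertex carries loops of lengths \(a,b\), with states \(A,\bar A,B,\bar B\). Entering the vertex along \(A\), one may continue along \(A\), \(B\) or \(\bar B\), but not \(\bar A\). By the reversal symmetry I set \(v_A=v_{\bar A}=p\) and \(v_B=v_{\bar B}=q\), with \(y=x^{-a}\) and \(z=x^{-b}\). The equations become \(p=yp+2zq\) and \(q=zq+2yp\). Eliminating gives \((1-y)(1-z)=4yz\), which is \((x^a-1)(x^b-1)=4\). This expands to \(R_{a,b}\), and rewriting gives \(2/(x^a+1)+2/(x^b+1)=1\).

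\emph{Dumbbell.} The loops \(A\) (length \(a\)) at \(u\) and \(B\) (length \(b\)) at \(w\) are joined by the bar \(c\) oriented from \(u\) to \(w\). I take \(v_A=v_{\bar A}=p\), \(v_B=v_{\bar B}=q\) and \(y=x^{-a}\), \(z=x^{-b}\), \(t=x^{-c}\). The equations are
\[
  p=yp+tv_c,\qquad v_c=2zq,\qquad q=zq+tv_{\bar c},\qquad v_{\bar c}=2yp .
\]
Eliminating gives \((1-y)(1-z)=4t^2yz\), i.e. \(x^{2c}(x^a-1)(x^b-1)=4\), which expands to \(D_{a,b,c}\). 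In each case the symmetric ansatz produces a genuinely positive eigenvector of eigenvalue \(1\), so it certifies \(\rho=1\) without any need to justify the ansatz.

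The converse part follows directly. For any lengths, the unique root \(x>1\) of the scalar equation is a value at which a positive \(1\)-eigenvector exists, so it is \(\alpha_{K,\ell}=\rho(B_{K(\ell)})\) by \cref{thm:kernel-pressure}. Uniqueness of the root can also be seen elementarily, since each left-hand side is strictly monotone in \(x>1\). The main obstacle is bookkeeping: loops must contribute two outgoing states, namely both orientations of the other loop, and the forbidden reversal must be handled correctly. I will check the coefficients (\(2\) and \(4\)) against the sanity values \(\alpha=2^{1/L}\) for the theta graph with equal lengths \(L\), and \(\alpha=3^{1/a}\) for the figure-eight with \(a=b\). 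The latter corresponds to the \(4\)-regular bouquet, whose Hashimoto radius is \(3\).
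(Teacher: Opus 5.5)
Your proposal is correct and follows essentially the paper's route: you apply the kernel pressure equation, reduce the eigenvalue-one equation on each of the three kernels by symmetry, and eliminate to the stated scalar and polynomial forms, and all your coefficients and transitions check out. The only difference is that you certify the symmetric ansatz by exhibiting a positive eigenvector, whereas the paper obtains symmetry of the Perron vector from Perron uniqueness under the kernel involution. Your route therefore needs to state that each scalar equation actually has a root \(x>1\), which then equals \(\alpha_{K,\ell}\). This follows from the limits of the monotone left sides as \(x\to1^+\) and \(x\to\infty\): from \(3/2\) down to \(0\) for the theta sum, and from \(0\) up to \(\infty\) for the two product forms.
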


\begin{proof}
The kernel classification preceding the theorem proves that the three cases
are exhaustive and mutually exclusive at the level of kernel type.

\emph{Figure-eight.}  For a figure-eight, the weighted turn
matrix commutes with the involution that exchanges the two orientations of
each loop.  Perron uniqueness therefore makes the positive Perron vector
invariant under this involution, so it takes the same
value on the two orientations of each loop.  Applying
\cref{thm:kernel-pressure} and writing the two reversal-invariant coordinates
gives \eqref{eq:figure-eight-pressure} directly.
Clearing denominators yields both \eqref{eq:figure-eight-product} and
\eqref{eq:figure-eight-polynomial}.  The left side of
\eqref{eq:figure-eight-pressure} is strictly decreasing from \(2\) to \(0\)
on \(x\in(1,\infty)\), so the root is unique.

\emph{Theta graph.}  For a theta graph, after traversing one of the three
paths, a non-backtracking path may return along either of the other two paths.
Put \(z=x^{-1}\).  The involution that interchanges the two branch vertices
and reverses all three oriented kernel edges commutes with the weighted turn
operator.  After normalization, Perron uniqueness makes its positive Perron
vector invariant under this involution.  The two orientations of each kernel
edge therefore carry the same Perron coordinate.  On the three reversal
pairs, the weighted turn equation is represented by the matrix with
off-diagonal entry \(z^{\ell_j}\), where
\((\ell_1,\ell_2,\ell_3)=(a,b,c)\), in column \(j\) and zero
diagonal.  The matrix determinant lemma, applied to
\[
  I+\operatorname{diag}(z^a,z^b,z^c)
  -\mathbf1_3(z^a,z^b,z^c),
\]
where \(\mathbf1_3\) is the three-dimensional all-ones column vector, shows
that its Perron root is one precisely when
\eqref{eq:theta-pressure} holds.  Indeed, under this scalar condition
the vector with coordinates \((1+z^{\ell_j})^{-1}\) is a strictly positive
eigenvector of eigenvalue one; irreducibility and Perron--Frobenius theory
then show that one is the spectral radius.  Clearing denominators gives
\eqref{eq:theta-polynomial}.  The left side of
\eqref{eq:theta-pressure} decreases strictly from \(3/2\) to zero.

\emph{Dumbbell graph.}  For the dumbbell, write
\(p=x^{-a}\), \(s_0=x^{-b}\), and \(w=x^{-c}\).
The weighted turn operator is invariant under independently exchanging
the two orientations of either loop.  Let \(A\) and \(B\) be the
Perron-vector values on the two loop-orientation pairs, and let \(U,V\) be the
values on the two corridor orientations.  The eigenvalue-one equations for
the weighted turn matrix are
\[
  (1-p)A=wU,\qquad V=2pA,\qquad
  (1-s_0)B=wV,\qquad U=2s_0B.
\]
Eliminating \(A,B,U,V\) gives
\[
  (1-p)(1-s_0)=4w^2ps_0,
\]
which is \eqref{eq:dumbbell-pressure}.  Its left side in the product form
\[
  x^{2c}(x^a-1)(x^b-1)=4
\]
is strictly increasing from zero to infinity for \(x>1\).  Expanding gives
\eqref{eq:dumbbell-polynomial}.  The converse in every case follows by
constructing the indicated subdivision and applying
\cref{thm:kernel-pressure}.
\end{proof}

\begin{corollary}[Typed biregular rank-two realization]
\label{cor:rank-two-tree-actions}
Fix \(r,s\geq2\).  A rank-two core from
\cref{thm:rank-two-polynomials} is realizable inside a typed quotient of
\(\T_{r+1,s+1}\) precisely under the following conditions:
\[
\begin{array}{c|c|c}
\text{kernel}&\text{length condition}&\text{additional valency condition}\\
\hline
\text{figure-eight}&a\equiv b\equiv0\pmod2&\max\{r,s\}\geq3\\
\text{theta}&a\equiv b\equiv c\pmod2&\text{none}\\
\text{dumbbell}&a\equiv b\equiv0\pmod2&\text{none}.
\end{array}
\]
In each case the deck group is free of rank two, acts freely and
type-preservingly, and has critical exponent \(\log\alpha\), where
\(\alpha\) is the distinguished root in the corresponding polynomial
family.
\end{corollary}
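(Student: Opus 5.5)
The plan is to apply the admissibility criterion \eqref{eq:kernel-parity} and the degree bounds of \cref{thm:fixed-rank-kernels} to each of the three rank-two kernels, and then invoke \cref{thm:finite-core-realization} to pass from a typed core to a free type-preserving action. The admissible-length step follows \cref{thm:fixed-rank-kernels}. A subdivision \(K(\ell)\) carries a bipartite type extension precisely when \(\ell\) is admissible for some type map \(\tau\) on the branch vertices. The core inherits the types and degrees of the branch vertices, and every subdivision vertex has degree two. Since \(r,s\geq2\), the interior vertices satisfy both degree bounds automatically. So the only conditions to record are the parity of the lengths and the degree of each branch vertex in its assigned type.

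\emph{Figure-eight.} There is one branch vertex \(v\), of degree four, and both edges are loops. For a loop, \eqref{eq:kernel-parity} reads \(\ell\equiv2\tau(v)\equiv0\pmod2\), so \(a\) and \(b\) must be even. The vertex \(v\) can be given type zero if \(r+1\geq4\) and type one if \(s+1\geq4\); both choices are possible under the stated hypothesis, so the valency condition is exactly \(\max\{r,s\}\geq3\). \emph{Theta.} There are two branch vertices \(u,v\), each of degree three, and three edges joining them. Each length must be congruent to \(\tau(u)+\tau(v)\), so \(a\equiv b\equiv c\pmod 2\). Conversely, common parity zero is realized by \(\tau(u)=\tau(v)\), and common parity one by \(\tau(u)\neq\tau(v)\). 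Degree three is at most \(\min\{r,s\}+1\), so no valency condition arises. \emph{Dumbbell.} Each loop forces \(a\) and \(b\) to be even. The corridor length \(c\) is unrestricted: we choose \(\tau(v)=\tau(u)+c\). Both branch vertices have degree three, so again there is no valency condition.

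The converse direction proceeds as follows. Given admissible data, the typed core \(C=K(\ell)\) is a finite connected bipartite core with \(b_1(C)=2\) satisfying the two degree bounds. \Cref{thm:finite-core-realization} then yields a biregular quotient \(Q_C\) with core \(C\) and universal cover \(\T_{r+1,s+1}\). Its deck group \(\pi_1(Q_C)\cong\pi_1(C)\) is free of rank \(b_1(C)=2\), acts freely, and acts type-preservingly because \(Q_C\) is bipartite. Its critical exponent is \(\log\rho(B_C)\). Finally, \cref{thm:rank-two-polynomials} identifies \(\rho(B_C)\) with the distinguished root of the corresponding family.

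For necessity, a typed quotient with such a core restricts to a type map on the kernel. This forces the parity conditions, and for the figure-eight it also forces the valency condition. The only subtle point is that necessity must allow \emph{either} type at the figure-eight vertex; if \(r=s=2\), the degree-four vertex fits neither type. I expect no real obstacle beyond keeping the loop parity convention (a loop has both ends at the same vertex) consistent with \eqref{eq:kernel-parity}.
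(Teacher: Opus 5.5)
Your proposal is correct and follows essentially the paper's route. The parity conditions come from the kernel admissibility condition; the paper phrases the same thing as the cycle lengths \(a,b\); \(a+b,a+c,b+c\); \(a,b\) being even. It then checks degree four against degree three for valency and completes degrees by \cref{thm:finite-core-realization}. One small wording fix: under \(\max\{r,s\}\geq3\) only one of the two types need be available at the figure-eight vertex, not both. For instance, \(r=3\), \(s=2\) allows only type zero, and this is exactly what your necessity remark uses.
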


\begin{proof}
The figure-eight has cycles of lengths \(a,b\), the theta graph has cycle
lengths \(a+b,a+c,b+c\), and the dumbbell has cycles of lengths \(a,b\).
Thus the displayed congruences are necessary and sufficient for
bipartiteness.  The unique figure-eight branching vertex has degree four,
so it can be assigned a type exactly when one ambient valency is at least
four.  Every theta or dumbbell branching vertex has degree three, allowed
for both types because \(r,s\geq2\).  In the theta case, even path lengths
give the same type at the two branch vertices and odd path lengths give
opposite types.  In the dumbbell case, the parity of \(c\) makes the same
choice.  Hence all admissible type maps exist, and
\cref{thm:finite-core-realization} completes the missing degrees without
changing the core or its Perron root.
\end{proof}

\begin{proof}[Proof of \cref{thm:intro-rank-two}]
Combine the exhaustive polynomial classification in
\cref{thm:rank-two-polynomials} with the necessary and sufficient typed
conditions in \cref{cor:rank-two-tree-actions}.
The final decidability assertion is supplied by
\cref{thm:effective-rank-two} below.
\end{proof}

\begin{corollary}[Regular rank-two specialization]
Setting \(r=s=q\), the theta and dumbbell families occur for every
\(q\geq2\), while the figure-eight family occurs for \(q\geq3\).  This is
the type-preserving rank-two classification on \(\T_{q+1}\).
\end{corollary}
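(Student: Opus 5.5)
This is a specialization of \cref{cor:rank-two-tree-actions} combined with \cref{thm:rank-two-polynomials}, so the plan is to substitute \(r=s=q\) and read off which rows of the table survive.

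\textbf{Step 1: type-preserving free actions are typed quotients.} Any free type-preserving action on \(\T_{q+1}=\T_{q+1,q+1}\) has a bipartite quotient. Its core is therefore a typed core with both degree bounds equal to \(q+1\). Conversely, \cref{thm:finite-core-realization} with \(r=s=q\) realizes every such typed core. The rank-two type-preserving spectrum on \(\T_{q+1}\) is thus exactly \(\mathcal R_{2;q,q}\), and the corollary describes it through the three kernels.

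\textbf{Step 2: check the valency column.} The theta and dumbbell kernels have only degree-three branch vertices. Since \(q+1\geq3\) for every \(q\geq2\), both families occur whenever their parity conditions hold. The figure-eight has a single branch vertex of degree four. The additional valency condition \(\max\{r,s\}\geq3\) becomes \(q\geq3\): for \(q=2\) every vertex of \(\T_3\) has degree three, so no degree-four vertex can occur in a core. The parity conditions carry over unchanged, namely \(a,b\) even for the figure-eight and dumbbell, and \(a\equiv b\equiv c\pmod 2\) for the theta.

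\textbf{Step 3: nonemptiness.} Each family is nonempty under its stated condition. Examples are \(a=b=2\) for the figure-eight and dumbbell (any \(c\geq1\) for the dumbbell), and \(a=b=c=1\) for the theta. These give genuine rank-two free actions via \cref{cor:rank-two-tree-actions}.

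The only point needing care is the \(q=2\) exclusion of the figure-eight. It is immediate from the degree bound, so there is no real obstacle.
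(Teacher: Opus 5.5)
Your proposal is correct and follows the paper's route. The corollary is simply \cref{cor:rank-two-tree-actions} with \(r=s=q\): the figure-eight valency condition \(\max\{r,s\}\geq3\) becomes \(q\geq3\), while the degree-three branch vertices of the theta and dumbbell kernels are admissible for every \(q\geq2\), and your nonemptiness examples are a harmless extra check.
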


\begin{remark}
At fixed rank, removing the type-preserving hypothesis is not a formal
corollary: the connected canonical bipartite double cover of a connected
non-bipartite rank-\(g\) core has rank \(2g-1\).  Thus
\(\cA_{q,q}=\cA_q\) holds for the unrestricted finite-rank union, but a
rank-two untyped classification also includes the non-bipartite length
patterns in \cref{thm:rank-two-polynomials}.
\end{remark}

\subsection{An effective rank-two membership test}

The preceding classification turns the inverse problem into a finite exact
calculation for a prescribed algebraic number.

\begin{theorem}[Effective typed rank-two test]\label{thm:effective-rank-two}
Fix \(r,s\geq2\).  Let \(\alpha>1\) be a real algebraic integer, specified
by its minimal polynomial and a rational isolating interval, meaning an
interval containing \(\alpha\) and no other real zero of that polynomial.
Whether \(\alpha\) is
the Hashimoto spectral radius of a rank-two typed core for
\(\T_{r+1,s+1}\) is decidable by a finite search.  In addition to the
length and valency conditions of \cref{cor:rank-two-tree-actions}, it is
enough to use the following bounds:
\begin{enumerate}[label=\textup{(\roman*)}]
  \item in the figure-eight case it is enough to test positive integers
  \(a,b\) satisfying
  \begin{equation}\label{eq:figure-eight-bound}
    a,b\leq
    \frac{\log\!\left(1+4/(\alpha-1)\right)}{\log\alpha};
  \end{equation}
  \item in the dumbbell case it is enough to impose the same bound on
  \(a,b\) and
  \begin{equation}\label{eq:dumbbell-c-bound}
    c\leq
    \frac{1}{2\log\alpha}
    \log\!\left(\frac{4}{(\alpha-1)^2}\right);
  \end{equation}
  \item in the theta case, after ordering \(a\leq b\leq c\), it is enough
  to test
  \begin{equation}\label{eq:theta-ab-bound}
    \alpha^a\leq2,
    \qquad
    \alpha^b\leq1+\frac{2}{\alpha^a}<3.
  \end{equation}
  For each remaining pair \((a,b)\), the value of \(\alpha^c\), if it
  exists, is forced by
  \begin{equation}\label{eq:theta-c-forced}
    \alpha^c
    =
    \left(
      1-\frac{1}{\alpha^a+1}
       -\frac{1}{\alpha^b+1}
    \right)^{-1}-1.
  \end{equation}
  The possible integer \(c\) is then located by finitely many exact order
  comparisons in \(\QQ(\alpha)\), as described in the proof.
\end{enumerate}
All polynomial identities, inequalities, and congruence conditions are
decidable exactly in this finite search.
\end{theorem}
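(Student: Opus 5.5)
The argument reduces everything to \cref{thm:rank-two-polynomials} and \cref{cor:rank-two-tree-actions}. By those results, \(\alpha\) is a rank-two typed spectral radius exactly when one of the three pressure equations \eqref{eq:figure-eight-product}, \eqref{eq:theta-pressure}, \eqref{eq:dumbbell-pressure} holds for some integer lengths obeying the parity and valency conditions. It therefore suffices to show two things: only finitely many length triples or pairs can satisfy each equation at the given \(\alpha\), and each candidate can be checked exactly.

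\emph{Bounds for the figure-eight and dumbbell.} In the figure-eight case, \((\alpha^a-1)(\alpha^b-1)=4\) with both factors positive. Since \(b\geq1\), we have \(\alpha^b-1\geq\alpha-1\), so \(\alpha^a-1\leq4/(\alpha-1)\). This gives \eqref{eq:figure-eight-bound}, and the bound on \(b\) follows symmetrically. In the dumbbell case, \(\alpha^{2c}\geq1\) gives \((\alpha^a-1)(\alpha^b-1)\leq4\), and the same argument bounds \(a\) and \(b\). Since also \((\alpha^a-1)(\alpha^b-1)\geq(\alpha-1)^2\), we get \(\alpha^{2c}\leq4/(\alpha-1)^2\), which is \eqref{eq:dumbbell-c-bound}.

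\emph{Bounds for the theta graph.} Order \(a\leq b\leq c\) and set \(u_j=1/(\alpha^{\ell_j}+1)\). Then \(u_1\geq u_2\geq u_3\) and \(\sum u_j=1\), so \(u_1\geq1/3\), which gives \(\alpha^a\leq2\). Also \(u_2+u_3\geq 1-u_1\) with \(u_2\geq u_3\), so \(2u_2\geq1-u_1=\alpha^a/(\alpha^a+1)\). This yields \(\alpha^b\leq1+2/\alpha^a<3\), matching \eqref{eq:theta-ab-bound}. Only finitely many \(a,b\) satisfy these bounds, and solving \eqref{eq:theta-pressure} for \(\alpha^c\) gives \eqref{eq:theta-c-forced}. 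This forced value \(\beta\in\QQ(\alpha)\) must satisfy \(\beta>1\), which is checked exactly.

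\emph{Exactness of the search.} The main obstacle is to carry out every step by exact arithmetic rather than floating point. Each real bound such as \(\log(1+4/(\alpha-1))/\log\alpha\) is used only through inequalities of the form \(\alpha^k\leq \text{(element of }\QQ(\alpha))\). For each integer \(k\), such a sign test is decidable, because the sign of an element of \(\QQ(\alpha)\) at the isolated real root \(\alpha\) can be determined by refining the rational isolating interval. To enumerate the candidate \(k\), we increase \(k\) until the inequality fails; this terminates because \(\alpha>1\).

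For the theta case, we locate \(c\) by comparing \(\alpha^k\) with \(\beta\) for \(k=b,b+1,\ldots\) until \(\alpha^k\geq\beta\), and then test equality. Each candidate equation, for instance \(R_{a,b}(\alpha)=0\), is decided exactly by reducing the polynomial modulo the minimal polynomial of \(\alpha\). Finally, we impose the congruence conditions and the condition \(\max\{r,s\}\geq3\) from \cref{cor:rank-two-tree-actions}. These are finite integer checks.
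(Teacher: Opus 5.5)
Your proposal is correct and follows the paper's proof essentially step for step. You use the same factor bound \(\alpha^n-1\geq\alpha-1\) for the figure-eight and dumbbell, the same ordering argument (\(t_a\geq1/3\) and \(1-t_a\leq 2t_b\)) for the theta graph, and the same exact comparison of successive powers of \(\alpha\) against elements of \(\QQ(\alpha)\).

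The only difference is in the theta case. You start at \(k=b\) and test equality only at the first \(k\) with \(\alpha^k\geq\beta\), whereas the paper tests every \(c<N\) and imposes \(b\leq c\) afterwards; this is a slight streamlining of the same search.
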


\begin{proof}
In the figure-eight case,
\((\alpha^a-1)(\alpha^b-1)=4\), and each factor is at least
\(\alpha-1\).  Hence
\[
  \alpha^a-1,\ \alpha^b-1\leq\frac4{\alpha-1},
\]
which proves \eqref{eq:figure-eight-bound}.  In the dumbbell case,
\[
  \alpha^{2c}(\alpha^a-1)(\alpha^b-1)=4.
\]
The lower bounds \(\alpha^a-1,\alpha^b-1\geq\alpha-1\) give
\eqref{eq:dumbbell-c-bound}; discarding the factor \(\alpha^{2c}\geq1\)
gives the stated bounds for \(a,b\).

For the exact search, the logarithmic displays are used only as concise
numerical bounds; no logarithm of an algebraic number needs to be evaluated.
In the figure-eight and dumbbell cases, enumerate positive integers \(n\)
while
\begin{equation}\label{eq:effective-ab-algebraic-bound}
  \alpha^n-1\leq\frac{4}{\alpha-1}.
\end{equation}
Once this inequality fails, it fails for every larger \(n\).  In the
dumbbell case, enumerate corridor lengths while
\begin{equation}\label{eq:effective-c-algebraic-bound}
  \alpha^{2n}(\alpha-1)^2\leq4.
\end{equation}
These are exact order comparisons in \(\mathbb Q(\alpha)\) and produce
precisely the finite ranges described in
\eqref{eq:figure-eight-bound}--\eqref{eq:dumbbell-c-bound}.

For a theta graph, set \(t_n=(\alpha^n+1)^{-1}\).  If
\(a\leq b\leq c\), then \(t_a\geq t_b\geq t_c\) and
\(t_a+t_b+t_c=1\).  Thus \(t_a\geq1/3\), which gives
\(\alpha^a\leq2\).  Moreover,
\[
  1-t_a=t_b+t_c\leq2t_b,
\]
so
\[
  \alpha^b+1\leq\frac2{1-t_a}
  =2+\frac2{\alpha^a}.
\]
This is \eqref{eq:theta-ab-bound}, and the remaining equation uniquely
forces \eqref{eq:theta-c-forced}.  To turn this uniqueness into a finite
algorithm, put
\[
  q_{a,b}
  =1-\frac1{\alpha^a+1}-\frac1{\alpha^b+1},
  \qquad
  X_{a,b}=q_{a,b}^{-1}-1.
\]
If \(q_{a,b}\leq0\) or \(X_{a,b}\leq1\), there is no positive integer
\(c\).  Otherwise, compare the algebraic numbers
\(\alpha,\alpha^2,\ldots\) successively with \(X_{a,b}\), and stop at the
first \(N\) for which \(\alpha^N>X_{a,b}\).  This process terminates because
\(\alpha>1\).  Any solution of \(\alpha^c=X_{a,b}\) must then have
\(1\leq c<N\), so the finitely many remaining powers can be tested exactly;
one also retains the ordering \(b\leq c\) and the required parity condition.

There are therefore finitely many integer tuples to inspect.  Restricting
them by the parity and figure-eight valency conditions in
\cref{cor:rank-two-tree-actions} gives exactly the typed biregular
candidates.  Arithmetic and order in the real algebraic number field
\(\QQ(\alpha)\) are effective: reduction modulo the minimal polynomial gives
exact field arithmetic, and the isolating interval permits exact sign and
order determination.  Thus every polynomial identity, power identity, and
inequality in the finite search can be checked exactly; see
\cite{BasuPollackRoy2006} for standard algorithms for real algebraic numbers
and real-root isolation.
\end{proof}

\begin{corollary}[Polynomial partial converse]\label{cor:rank-two-converse}
Fix \(r,s\geq2\), and let \(m_\alpha(x)\in\ZZ[x]\) be the minimal
polynomial of a real algebraic integer \(\alpha>1\).  The number \(\alpha\)
has a typed rank-two Hashimoto realization for \(\T_{r+1,s+1}\) if and only
if the finite search in
\cref{thm:effective-rank-two} finds parameters for which
\[
  m_\alpha(x)\mid R_{a,b}(x),\qquad
  m_\alpha(x)\mid\Theta_{a,b,c}(x),\qquad\text{or}\qquad
  m_\alpha(x)\mid D_{a,b,c}(x),
\]
and \(\alpha\) is the distinguished positive root, with the corresponding
conditions of \cref{cor:rank-two-tree-actions}.  Every number obtained in
this way is a weak Perron algebraic integer and is realized as
\(\exp(\delta_\Gamma)\) for a rank-two free type-preserving action on
\(\T_{r+1,s+1}\).
\end{corollary}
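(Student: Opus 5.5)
The corollary is essentially a repackaging of \cref{thm:rank-two-polynomials}, \cref{cor:rank-two-tree-actions}, and \cref{thm:effective-rank-two}, together with the divisibility fact that an algebraic integer vanishing at an integral polynomial has its minimal polynomial dividing it. I would argue the two directions separately and then deal with the closing assertion.

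\emph{Forward direction.} Suppose \(\alpha=\rho(B_C)\) for a typed rank-two core \(C\) satisfying the biregular degree bounds. By \cref{thm:rank-two-polynomials}, \(\alpha\) is the unique root \(x>1\) of \(R_{a,b}\), \(\Theta_{a,b,c}\), or \(D_{a,b,c}\), according to the kernel type. By \cref{cor:rank-two-tree-actions}, the typed realizability forces the stated parity conditions and, in the figure-eight case, the valency condition \(\max\{r,s\}\geq3\). Each of the three polynomials is monic and integral, so \(m_\alpha\) divides the one that vanishes at \(\alpha\). The parameters \((a,b)\) or \((a,b,c)\) satisfy the bounds \eqref{eq:figure-eight-bound}, \eqref{eq:dumbbell-c-bound}, and \eqref{eq:theta-ab-bound}, since those bounds were derived in \cref{thm:effective-rank-two} from the pressure identities alone. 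In the theta case \(c\) is forced by \eqref{eq:theta-c-forced}. Hence the finite search encounters these parameters.

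\emph{Reverse direction.} Suppose the search returns parameters with \(m_\alpha\) dividing the relevant polynomial, \(\alpha\) equal to its distinguished root \(x>1\), and the congruence and valency conditions satisfied. The converse clause of \cref{thm:rank-two-polynomials} builds the subdivided core with \(\rho(B_C)=\alpha\), and \cref{cor:rank-two-tree-actions} shows that it carries an admissible typing. \Cref{thm:finite-core-realization} then completes \(C\) to an \((r+1,s+1)\)-biregular quotient \(Q_C\) with deck group free of rank \(b_1(C)=2\), acting freely and type-preservingly, with \(\exp(\delta_\Gamma)=\alpha\). The weak Perron property follows from \cref{prop:arithmetic}(i), since \(C\) is non-cyclic.

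\emph{Main obstacle.} Divisibility \(m_\alpha\mid F\) does not by itself make \(\alpha\) the root \(>1\) of \(F\): a conjugate of \(\alpha\) might be that root instead. The statement already requires \(\alpha\) to be the distinguished root, so the task is to check that this requirement is decidable. I would use the strict monotonicity shown in \cref{thm:rank-two-polynomials} to get uniqueness of the root \(>1\), so it suffices to test \(F(\alpha)=0\) exactly in \(\QQ(\alpha)\) and \(\alpha>1\) using the isolating interval, as in \cref{thm:effective-rank-two}.
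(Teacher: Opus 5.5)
Your proposal is correct and follows the route the paper intends. The paper gives no separate proof; it treats the corollary as the immediate combination of \cref{thm:rank-two-polynomials}, \cref{cor:rank-two-tree-actions}, \cref{thm:effective-rank-two}, \cref{prop:arithmetic}\textup{(i)} and \cref{thm:finite-core-realization}, which is exactly how you assemble it.

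One correction to your last paragraph. Each of \(R_{a,b}\), \(\Theta_{a,b,c}\), \(D_{a,b,c}\) has exactly one real zero in \((1,\infty)\), so \(m_\alpha\mid F\) gives \(F(\alpha)=0\) with \(\alpha>1\), and \(\alpha\) is automatically the distinguished root. The conjugate scenario you describe therefore cannot occur. Your own monotonicity argument proves precisely this, so the ``distinguished root'' clause is redundant rather than a genuine obstacle.
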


\section{Entropy strata beyond finite rank}
\label{sec:structural-problems}

\subsection{The unrestricted Hashimoto inverse problem}

The rank-two theorem is an exact partial converse, but it does not make the
weak-Perron condition sufficient at unbounded rank.  To isolate the extra
graph structure, let \(O_C\) be the origin-incidence matrix whose rows are
indexed by \(V(C)\), whose columns are indexed by \(\vec E(C)\), and whose
\((v,e)\)-entry is one exactly when \(o(e)=v\).  Let \(J_C\) be the
permutation matrix of the fixed-point-free involution \(e\mapsto\bar e\), so
\((J_C)_{e,f}=1\) exactly when \(f=\bar e\).

\begin{proposition}[Hashimoto incidence factorization]
\label{prop:hashimoto-factorization}
For every finite multigraph \(C\),
\begin{equation}\label{eq:hashimoto-factorization}
  B_C=J_C(O_C^{\mathsf T}O_C-I_{2|E(C)|}).
\end{equation}
After the oriented edges are grouped by their origins,
\(O_C^{\mathsf T}O_C-I_{2|E(C)|}\) is the direct sum, over
\(v\in V(C)\), of the
matrices
\[
  \mathbf1_{\deg(v)}\mathbf1_{\deg(v)}^{\mathsf T}-I_{\deg(v)}.
\]
Here \(\mathbf1_j\) denotes the all-ones column vector of length \(j\).
Conversely, a set partition into such origin blocks together with a
fixed-point-free reversal involution reconstructs a multigraph and its
Hashimoto matrix by \eqref{eq:hashimoto-factorization}.
\end{proposition}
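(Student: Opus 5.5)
We verify the factorization entrywise, then read off the block structure and the converse directly from the definitions.

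\emph{Entries of \(O_C^{\mathsf T}O_C\).} For oriented edges \(e,f\), we have \((O_C^{\mathsf T}O_C)_{e,f}=\sum_v (O_C)_{v,e}(O_C)_{v,f}\). This equals one exactly when \(o(e)=o(f)\), and zero otherwise, since each column of \(O_C\) has a single one. Subtracting the identity therefore gives the matrix \(G\) with \(G_{e,f}=1\) iff \(o(e)=o(f)\) and \(e\neq f\).

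Grouping the oriented edges by origin makes \(G\) block diagonal. Its block at \(v\) is \(\mathbf1_{\deg(v)}\mathbf1_{\deg(v)}^{\mathsf T}-I_{\deg(v)}\), because exactly \(\deg(v)\) oriented edges leave \(v\). Under the half-edge convention, a loop contributes both of its orientations to this count, so the block size is the degree.

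\emph{Multiplying by \(J_C\).} Left multiplication by the permutation matrix \(J_C\) permutes rows: \((J_CG)_{e,f}=G_{\bar e,f}\). Now \(G_{\bar e,f}=1\) iff \(o(\bar e)=o(f)\) and \(\bar e\neq f\), that is, iff \(t(e)=o(f)\) and \(f\neq\bar e\). This is precisely the defining condition \eqref{eq:hashimoto} of \((B_C)_{e,f}\), which proves \eqref{eq:hashimoto-factorization}. Loops need a small check: for a loop \(e\), we have \(o(\bar e)=o(e)\) but \(\bar e\neq e\) as states, so the entry \((e,e)\) is allowed. Correspondingly, a loop can be traversed twice in the same direction without backtracking, which agrees with \(B_C\).

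\emph{Converse.} Suppose we are given a finite set \(S\), a set partition of \(S\) into blocks, and a fixed-point-free involution \(J\) on \(S\). Construct a multigraph with one vertex per block and one unoriented edge per \(J\)-orbit \(\{e,Je\}\). The edge joins the block of \(e\) to the block of \(Je\); when both lie in the same block, the edge is a loop. Declare \(o(e)\) to be the block containing \(e\), \(\bar e=Je\), and \(t(e)=o(\bar e)\).

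With these definitions, the origin-incidence matrix \(O_C\) of the constructed multigraph is the block-membership matrix of the partition, and \(J_C=J\). Hence \(O_C^{\mathsf T}O_C-I\) is the given direct sum of blocks, and the entrywise computation above shows that \(J(O_C^{\mathsf T}O_C-I)\) is its Hashimoto matrix. The degree of each vertex is its block size, counted with the loop convention.

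The only point requiring care is the loop and multi-edge bookkeeping in this converse. There, \(J\) must be fixed-point-free so that every orbit has size two and each oriented state has a distinct reverse; the rest is routine.
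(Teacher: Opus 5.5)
Your proof is correct and follows essentially the same entrywise verification as the paper. The only differences are cosmetic: the paper computes \(J_CO_C^{\mathsf T}O_C\) first and then subtracts \(J_C\) to remove the backtracking transition \(e\to\bar e\), and your explicit reconstruction of the multigraph in the converse spells out what the paper calls immediate.
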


\begin{proof}
For oriented edges \(e,f\),
\[
  \bigl(J_CO_C^{\mathsf T}O_C\bigr)_{e,f}=1
  \quad\Longleftrightarrow\quad
  o(\bar e)=o(f)
  \quad\Longleftrightarrow\quad
  t(e)=o(f).
\]
Subtracting \(J_C\) deletes precisely the transition \(e\to\bar e\).
The block description and the converse are immediate from the definition of
the origin-incidence matrix.
\end{proof}

Let
\[
  \mathcal W_{r,s}
  =\{1\}\cup\{\lambda\in(1,\sqrt{rs}]:
      \lambda\text{ is a weak Perron algebraic integer}\}.
\]
Then \(\cA_{r,s}\subseteq\mathcal W_{r,s}\).  Lind characterized Perron
numbers as spectral radii of primitive nonnegative integral matrices
\cite{Lind1984}; Boyle--Handelman developed a
far more general inverse spectral theory for nonnegative matrices
\cite{BoyleHandelman1991}.  Neither result supplies the reversal involution
and complete local turn blocks in
\eqref{eq:hashimoto-factorization}.

\begin{problem}[Inverse Ihara--Bass realization]\label{prob:inverse-ihara}
Fix \(r,s\geq2\).  Characterize the monic irreducible polynomials
\(p(x)\in\ZZ[x]\) for which there is a finite connected bipartite core
\(C=C_0\sqcup C_1\), with
\[
  2\leq\deg_C(v)\leq r+1\ (v\in C_0),
  \qquad
  2\leq\deg_C(v)\leq s+1\ (v\in C_1),
\]
such that
\begin{enumerate}[label=(\roman*)]
  \item \(p(x)\) divides
  \(\det(x^2I_{|V(C)|}-xA_C+(D_C-I_{|V(C)|}))\);
  \item \(\rho(B_C)>1\) is a zero of \(p(x)\).
\end{enumerate}
In an equivalent matrix formulation, one must characterize those
nonnegative integral matrices whose spectral data contain \(p\) and which,
after a simultaneous permutation of rows and columns, admit a set partition
into origin blocks and a fixed-point-free reversal involution satisfying
\eqref{eq:hashimoto-factorization}, together with the bipartite and two-sided
degree bounds above.  The factorization is automatic once a graph \(C\) has
been given; its force is as an additional structural constraint in the
matrix-only inverse problem.
Determine, in particular, which elements of \(\mathcal W_{r,s}\) belong to
\(\cA_{r,s}\).
\end{problem}

For fixed \(g\), \cref{thm:fixed-rank-kernels} replaces enumeration by
vertex number with finitely many typed kernels and admissible length vectors.
At rank two, \cref{thm:effective-rank-two} makes the test finite for each
prescribed algebraic number.  The unrestricted problem asks whether
comparable bounds or structural criteria exist uniformly as
\(g\to\infty\).  Setting \(r=s=q\) and forgetting rank recovers the regular
inverse problem through \cref{cor:regular-finite-core-spectrum}.

\subsection{Rank strata and degeneration boundaries}

\Cref{thm:rank-drop} shows that a nontrivial accumulation of fixed-rank
finite-state growth rates is forced onto a lower-rank stratum.  A natural next
step is to determine the converse under a fixed degree bound: which elements
of \(\mathcal R_{g_0;r,s}\) can be approached by elements of
\(\mathcal R_{g;r,s}\) when \(g_0<g\)?  The answer depends on whether the
realizing lower-rank core has spare degree at suitable vertices of the
required type.  A complete statement should therefore refine rank by the
type and number of available ports.  Such a refinement may lead to a
Cantor--Bendixson description of the closure and derived-set filtration of
the union of the fixed-rank spectra.

There is also a multi-scale moduli question behind
\cref{thm:rank-drop}.  Fix a kernel \(K\), let \(\ell_n\) be a sequence of
positive integral length vectors, and normalize by
\(\|\ell_n\|_1=\sum_{e\in E(K)}\ell_n(e)\).  A positive coordinate in a
projective limit records an edge whose length grows on the total-length scale;
for each fixed \(x>1\), the corresponding columns of the weighted turn matrix
then vanish.  A zero coordinate, however, can represent either a bounded
length or an unbounded length growing on a smaller scale, and therefore cannot
be identified directly with an edge contraction.  A useful compactification
should consequently combine the projective face with a flag recording
successive growth scales.  At each scale one would delete the vanishing
columns, pass to recurrent components, and suppress forced degree-two chains,
iterating the rank-drop procedure.  Different flags may still produce the
same lower-rank core.  Recording the limiting Perron root together with the
corresponding normalized Perron eigenvectors, or equivalently the resulting
non-backtracking boundary measures, would refine the scalar rank-drop theorem
to a compactification of the underlying dynamics.

\subsection*{Declaration of generative AI use}
During the preparation of this manuscript, the author used ChatGPT to assist with language editing, readability, organization, and preliminary consistency checks. The author independently reviewed and verified the resulting text and mathematical content, made all necessary revisions, and takes full responsibility for the content of the manuscript.
% =============================================================================

\end{document}